\DeclareMathOperator{\End}{End}
\DeclareMathOperator{\Hom}{Hom}
\DeclareMathOperator{\Ext}{Ext}
\DeclareMathOperator{\soc}{soc}
\DeclareMathOperator{\lmod}{\!-mod}
\DeclareMathOperator{\gmod}{\!-gmod}
\DeclareMathOperator{\kor}{ker}
\def\Email#1{\email{{#1}}}
\def\rd{{\rm d}}
\def\lam{\lambda}
\def\Lam{\Lambda}
\def\co{\mathcal{O}}
\def\<{\langle}
\def\>{\rangle}
\def\Z{\mathbb{Z}}
\def\mcL{\mathscr{L}}
\def\mcR{\mathscr{R}}
\def\N{\mathbb N}
\def\C{\mathbb C}
\def\O{{\mathcal{O}}}
\def\fg{\mathfrak{g}}
\def\fn{\mathfrak{n}}
\def\fh{\mathfrak{h}}
\def\fb{\mathfrak{b}}
\def\lam{\lambda}
\def\Lam{\Lambda}
\def\mL{\mathcal{L}}
\def\HH{\mathbb{H}}
\def\ml{L}
\def\uH{\underline{H}}
\def\ucH{\underline{\mathcal{H}}}
\def\huH{\hat{\uH}}
\def\hucH{\hat{\ucH}}
\def\<{\langle}
\def\>{\rangle}
\DeclareMathOperator\ext{ext}
\DeclareMathOperator\hd{hd}
\DeclareMathOperator\id{id}
\DeclareMathOperator\adj{adj}
\theoremstyle{plain}
\numberwithin{equation}{section}
\newtheorem{cor}[equation]{Corollary}
\newtheorem{lem}[equation]{Lemma}
\newtheorem{thm}[equation]{Theorem}
\newtheorem{prop}[equation]{Proposition}
\theoremstyle{definition}
\newtheorem{dfn}[equation]{Definition}
\theoremstyle{remark}
\newtheorem{rem}[equation]{Remark}
\newtheorem*{eg*}{Example}
\title{On Hecke algebras and $\Z$-graded twisting, Shuffling and Zuckerman functors}
\author{Ming Fang}\address[Ming Fang]{Academy of Mathematics and Systems Science, Chinese Academy of Sciences,
100190 \\
\& School of Mathematical Sciences, University of Chinese Academy of Sciences,100049\\
 Beijing, P. R.~China}
\author{Jun Hu}
\address[Jun Hu]{Key Laboratory of Algebraic Lie Theory and Analysis of Ministry of Education\\
School of Mathematics and Statistics\\ Beijing Institute of Technology\\ Beijing, 100081, P.R.~China}
\author{Yujiao Sun}
\address[Yujiao Sun]{Key Laboratory of Mathematical Theory and Computation in Information Security, School of Mathematics and Statistics\\
	Beijing Institute of Technology\\
	Beijing, 100081, P.R. China}
\email{yujiao.sun@bit.edu.cn}
\subjclass[2010]{20C08, 17B45}
\thanks{The author is supported by the Natural Science Foundation of China (No.\ 12431002).}
\keywords{BGG category $\co$, projective functors, twisting functors}
\date{}
\begin{document}

%We show that Arkhipov's isomorphism $T_w\circ\Ind_{\fp}^{\fg}\cong\Ind_{\fp}^{\fg}\circ T_w$ holds in the $\Z$-graded setting, where $\fp$ is a parabolic subalgebra of $\fg$ and $\Ind_{\fp}^{\fg}$ is the parabolic induction functor.

\begin{abstract}
Let $\fg$ be a complex  semisimple Lie algebra with Weyl group $W$. Let $\HH(W)$ be the Iwahori-Hecke algebra associated to $W$. For each $w\in W$, let $T_w$ and $C_w$ be the corresponding $\Z$-graded twisting functor and $\Z$-graded shuffling functor respectively. In this paper we present a categorical action of $\HH(W)$ on the derived category $D^b(\O_0^\Z)$ of the $\Z$-graded BGG category $\O_0^\Z$ via derived twisting functors as well as a categorical action of $\HH(W)$ on $D^b(\O_0^\Z)$ via derived shuffling functors. As applications, we get graded character formulae for $T_sL(x)$ and $C_sL(x)$ for each simple reflection $s$. We describe the graded shifts occurring in the action of the $\Z$-graded twisting and shuffling functors on dual Verma modules and simple modules. We also characterize the action of the derived $\Z$-graded Zuckerman functors on simple modules.
\end{abstract}

%We prove for any $x\in W$, $\theta_x(L(w_{0,I}))$ is indecomposable whenever it is nonzero, where $\theta_x$ is the indecomposable projective functor corresponding to $x$, $L(w_{0,I})$ is the simple module labelled by $w_{0,I}\cdot 0$ in the regular block of the BGG category $\O$ of $\fg$, $w_{0,I}$ is the unique longest element in the parabolic subgroup $W_I$ associated to any subset $I\subset\Pi$.

%Kildetoft and Mazorchuk's conjecture which says that

\maketitle
\section{Introduction}

Let $\fg$ be a finite dimensional complex semisimple Lie algebra with a fixed triangular decomposition $\fg=\fn\oplus\fh\oplus\fn^{-}$. Let $\O$ be the associated BGG category as defined in \cite{Hum}. For each $\lam\in\fh^*$, we use $L(\lam)$, $\Delta(\lam)$, $\nabla(\lam)$ and $P(\lam)$ to denote the simple module, the Verma module, the dual Verma module and the indecomposable projective module in $\O$ labelled by $\lam$ respectively.

Let $\Phi$ be the root system of $\fg$ and $W$ the Weyl group of $\fg$. Let $S$ be the set of simple reflections in $W$. For each $\lam\in\fh^*$ and $x\in W$, we define $x\cdot\lam:=x(\lam+\rho)-\rho$, where $\rho$ denotes the half sum of all the positive roots in $\Phi$.  We use $\O_\lam$ to denote the Serre subcategory of $\O$ generated by all $L(w\cdot\lam)$ for $w\in W$. In this paper we are mainly concerned with the regular block $\O_0$. By construction, $\oplus_{x\in W}P(x\cdot 0)$ is a regenerator of $\O_0$. We define $$
A:=\Biggl(\End_{\O_0}\biggl(\bigoplus_{x\in W}P(x\cdot 0)\biggr)\Biggr)^{{\rm op}} .
$$

By \cite{So1}, $A$ is a finite dimensional quasi-hereditary (basic) $\C$-algebra in the sense of \cite{CPS}, and there is an equivalence of categories: $\O_0\cong A\lmod$, where $A\lmod$ denotes the category of finite dimensional left $A$-modules. Moreover, by \cite{BGS}, we know that $A$ can be endowed with a Koszul $\Z$-grading which makes it into a Koszul algebra. Thus the category $A\gmod$ of finite dimensional $\Z$-graded left $A$-modules can be regarded as a $\Z$-graded version $\O_0^\Z$ of the BGG category $\O_0$. Henceforth, we set $$
\O_0^\Z:=A\gmod .
$$
For any $\Z$-graded module $M$ and $k\in\Z$, we define a $\Z$-graded module $M\<k\>$ such that $(M\<k\>)_i:=M_{i-k}$, $\forall\,i\in\Z$.\footnote{Note that we use an opposite convention for the grading shift as in \cite{KMM}.} All the structural modules (such as simple module $L(x\cdot 0)$, Verma module $\Delta(x\cdot 0)$ and indecomposable projective module $P(x\cdot 0)$) admit graded lifts. We fix a unique $\Z$-graded lift $L(x)$ of the simple module $L(x\cdot 0)$ such that $L(x)$ is concentrated in degree $0$; we fix a unique $\Z$-graded lift $\Delta(x)$ of the Verma module $\Delta(x\cdot 0)$ such that the unique simple head of $\Delta(x)$ is isomorphic to $L(x)$; we fix a unique $\Z$-graded lift $P(x)$ of the indecomposable projective module $P(x\cdot 0)$ such that the unique simple head of $P(x)$ is isomorphic to $L(x)$. Let ``$\circledast$'' be the $\Z$-graded duality functor on $\O_0^\Z$ introduced in \cite{Hu}. We define $\nabla(x):=\Delta(x)^{\circledast}$, which gives a $\Z$-graded lift of the dual Verma module $\nabla(x\cdot 0)$.

Twisting functors were first introduced in \cite{Ar}. These functors allow $\Z$-graded lifts, see \cite[Appendix]{MO}. For each $x\in W$, we use $T_x$ to denote the corresponding $\Z$-graded twisting functor. Shuffling functors were first introduced in \cite{Ca} and studied in \cite{Ir93} and \cite{MaStro05}. By \cite[\S2.7]{CMZ}, these functors allow $\Z$-graded lifts. For each $x\in W$, we use $C_x$ to denote the corresponding $\Z$-graded shuffling functor.

Let $v$ be an indeterminate over $\Z$ and $q:=v^2$. We use ``$\leq$'' to denote the Bruhat partial order on $W$. That is, for any $x,y\in W$, $x\leq y$ if and only if $x=s_{i_{j_1}}\cdots s_{i_{j_t}}$ for some reduced expression $y=s_{i_1}\cdots s_{i_m}$ of $y$ and some integers $1\leq t\leq m$, $1\leq j_1<\cdots< j_t\leq m$, where $s_{i_j}\in S$ for each $j$. If $x\leq y$ and $x\neq y$ then we write $x<y$. Let $w_0$ be the unique longest element in $W$.

\begin{dfn} The Iwahori-Hecke algebra $\HH(W)=\HH(W,S)$ with Hecke parameter $v$ associated to $(W,S)$ is a free $\Z[v,v^{-1}]$-module with standard basis $\{H_w|w\in W\}$ and multiplication rule given by $$
H_xH_y=H_{xy},\,\,\text{if $\ell(xy)=\ell(x)+\ell(y)$,}\quad H_s^2=(v^{-1}-v)H_s+H_e,\,\,\forall\,s\in S ,
$$
where $H_e$ is the identity element of $\HH(W)$.
\end{dfn}
The Hecke algebra $\HH(W)$ is a $v$-deformation of the group ring $\Z[W]$. One should identify $v$ in this paper with $v^{-1}$ (resp., $u^{-1/2}$) in the notation of \cite{KL} (resp., of \cite{L0}), and $H_w$ in this paper with the element $v^{-\ell(w)}T_w$ (resp., $u^{-\ell(w)/2}T_w$) in the notation of \cite{KL} (resp., of \cite{L0}). The following theorem is the first main result of this paper.

\begin{thm}\label{mainthm1} Let $\rho$ be the $\Z[v,v^{-1}]$-module isomorphism from the Grothendieck group of $D^b(\O_0^\Z)$ onto $\HH(W)$
defined by $$
\rho\Bigl([\nabla(x)\<k\>]\Bigr):=v^k H_{w_0x^{-1}},\quad \forall\,x\in W, k\in\Z .
$$
Then the derived twisting functors $\mL T_x$ gives rise to a categorical action of the Iwahori-Hecke algebra $\HH(W)$ on $D^b(\O_0^\Z)$ such that $$
\rho\bigl([(\mL T_x)M]\bigr)=\rho([M])H_x,\quad\forall\,x\in W, M\in\O_0^\Z .
$$
In particular, $\rho\bigl([(\mL T_x)\nabla(y)]\bigr)=H_{w_0y^{-1}}H_x,\forall\,x,y\in W$. Moreover, $\rho\bigl([L(x)]\bigr)=\ucH_{w_0x^{-1}}$. If furthermore $x\in W$ is an involution then $$
\rho\bigl([\Delta(x)]\bigr)=H_{xw_0}^{-1},\,\, $$
where $\ucH_{w_0x^{-1}}$ is the twisted Kazhdan-Lusztig basis element corresponding to $w_0x^{-1}$ (see Section 2).
\end{thm}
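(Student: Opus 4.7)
The plan is to verify the four parts of the theorem in turn. First I would check that $\rho$ is a well-defined $\Z[v,v^{-1}]$-module isomorphism. The graded Grothendieck group $K_0(D^b(\O_0^\Z))$ is free of rank $|W|$ over $\Z[v,v^{-1}]$ with basis $\{[\nabla(x)]\}_{x\in W}$ (the costandards of the quasi-hereditary Koszul algebra $A$, with $v$ acting as the shift $\<1\>$), and $\rho$ sends this bijectively to the standard basis $\{H_{w_0 x^{-1}}\}_{x\in W}$ of $\HH(W)$ via the relabeling $x\mapsto w_0 x^{-1}$, giving the desired $\Z[v,v^{-1}]$-linear isomorphism.

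Next, to prove the intertwining identity $\rho([(\mL T_x)M]) = \rho([M])H_x$, I would invoke the composition law $\mL T_{xy}\cong\mL T_x\circ\mL T_y$ whenever $\ell(xy)=\ell(x)+\ell(y)$, which is known for $\Z$-graded twisting functors (see \cite{MO}) and parallels $H_{xy}=H_xH_y$. Combined with $\Z[v,v^{-1}]$-linearity of $\rho$, this reduces the claim to the case of a simple reflection $x=s$ acting on a basis element $M=\nabla(y)$. The heart of the argument is then to establish, for each $s\in S$ and $y\in W$,
\[
[(\mL T_s)\nabla(y)] = \begin{cases} [\nabla(sy)], & sy<y, \\ [\nabla(sy)] + (v^{-1}-v)[\nabla(y)], & sy>y, \end{cases}
\]
which by direct calculation matches $H_{w_0 y^{-1}}H_s$ via the Hecke multiplication rule (the length comparison of $w_0 y^{-1}s$ with $w_0 y^{-1}$ separates the two cases). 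The quadratic relation $[\mL T_s]^2=(v^{-1}-v)[\mL T_s]+\id$ then follows on the Grothendieck group automatically. Verifying the displayed formula is the principal technical hurdle: it requires upgrading the classical ungraded action of $T_s$ on dual Verma modules (originally \cite{Ar}; see also \cite{MO}) to its $\Z$-graded refinement, carefully tracking grading shifts in the exact triangle relating $\mL T_s\nabla(y)$, $\nabla(sy)$ and $\nabla(y)$.

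The formula $\rho([L(x)])=\ucH_{w_0 x^{-1}}$ would then follow from the graded Kazhdan-Lusztig character formula in the Koszul-graded $\O_0^\Z$ (see \cite{BGS}): the expansion of $[L(x)]$ in the basis $\{[\nabla(y)]\}$ has KL-polynomial coefficients which, under $\rho$, coincide with the defining expansion of the twisted KL basis element $\ucH_{w_0 x^{-1}}$ recalled in Section~2.

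Finally, for $x$ an involution I would use $\Delta(x)=\nabla(x)^\circledast$ together with the behavior of the graded duality $\circledast$ (of \cite{Hu}) on the Grothendieck group: $\circledast$ induces a semilinear involution which, up to a grading-shift correction, corresponds under $\rho$ to the bar involution $H_w\mapsto H_{w^{-1}}^{-1}$ on $\HH(W)$. A direct computation $\overline{H_{w_0 x^{-1}}}=H_{(w_0 x^{-1})^{-1}}^{-1}=H_{xw_0}^{-1}$ (using $w_0^{-1}=w_0$) then produces $\rho([\Delta(x)])=H_{xw_0}^{-1}$. The restriction to involutions reflects the fact that the grading-shift correction in the $\circledast$-$\rho$ compatibility vanishes precisely when $x=x^{-1}$. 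The principal obstacle throughout is the graded computation of $[\mL T_s\nabla(y)]$ in the second paragraph, with a secondary subtlety in pinning down the correct shift correction for the involution case.
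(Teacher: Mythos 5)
Your overall plan mirrors the paper's proof closely, and the first three parts are sound. The well-definedness of $\rho$, the reduction of the intertwining identity to the case $x=s\in S$, $M=\nabla(y)$ via the braid relations, the case-by-case formula for $[\mL T_s\nabla(y)]$ (which in the paper is established in Lemmas \ref{Tsaction1} and \ref{Tsaction2}, exactly the graded refinement you flag as the main technical hurdle), and the derivation of $\rho([L(x)])=\ucH_{w_0x^{-1}}$ from the BGS/KL character formula — these are precisely the paper's steps. Note that for the last of these the paper also needs the palindromic identity $P_{w_0y,w_0x}=P_{w_0y^{-1},w_0x^{-1}}$ (Brenti), since the character formula naturally gives coefficients in terms of $P_{w_0y,w_0x}$ while $\rho$ has $w_0y^{-1}$ as its index; your sketch glosses over this but it is a small point.

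Where you diverge from the paper is the involution case, and here your argument is both different and, as written, flawed. The paper proves $\rho([\Delta(x)])=H_{xw_0}^{-1}$ for involutions $x$ by expanding $[\Delta(x)]$ in the $\nabla$-basis with coefficients $r_{y,x}(v)$ coming from inverting Hecke generators, and then using the $R$-polynomial symmetry $r_{y,x}=r_{y^{-1},x^{-1}}$ (Corollary \ref{vermacor}) to re-index the sum. You instead propose to use that $\rho$ intertwines the graded duality $\circledast$ with the bar involution of $\HH(W)$ ``up to a grading-shift correction,'' with the correction vanishing exactly when $x=x^{-1}$. This explanation is wrong: there is no nontrivial grading-shift correction, and the identity $\rho([\Delta(x)])=H_{xw_0}^{-1}$ in fact holds for \emph{all} $x\in W$, not only involutions (the paper's restriction to involutions is an artifact of how its re-indexing argument is organized, not an intrinsic feature of the statement). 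Moreover, you assert the $\circledast$–bar compatibility of $\rho$ without proof, and this is genuinely nontrivial — the paper's Remark after Lemma \ref{keylem02} explicitly warns that the analogous isomorphism $\varphi$ does \emph{not} intertwine $\circledast$ with bar, so the claim is not automatic for an arbitrary identification of $[\O_0^\Z]$ with $\HH(W)$. To repair your route, you should prove the compatibility: this follows cleanly from the third part of the theorem, since $\{[L(x)]\}$ is a $\Z[v,v^{-1}]$-basis with $L(x)^\circledast\cong L(x)$, while $\{\ucH_{w_0x^{-1}}\}$ is a $\Z[v,v^{-1}]$-basis of bar-invariant elements, and $\rho$ carries the one to the other; both $[\,\cdot\,]\circ\circledast$ and the bar involution are the unique $v\mapsto v^{-1}$-semilinear involutions fixing their respective bases. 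Once that is in place, $\rho([\Delta(x)])=\rho([\nabla(x)^\circledast])=\overline{H_{w_0x^{-1}}}=H_{xw_0}^{-1}$ for all $x$, which in particular covers the involution case claimed in the theorem.
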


Let $s\in S$ and $x\in W$. It is well-known that $T_sL(x)\neq 0$ if and only if $sx<x$. Andersen and Stroppel \cite{AS} studied the structure of $T_sL(x)$ in the ungraded setting. Using Theorem \ref{mainthm1}, we obtained two
graded character formulae for the twisting simple module $T_sL(x)$ in terms of Kazhdan-Lusztig polynomials, which is the second main result of this paper.

\begin{thm}\label{mainthm2} Let $s\in S, x\in W$ with $sx<x$. Then we have $\hd(T_sL(x))\cong L(x)\<-1\>$ and $[\soc T_sL(x):L(sx)]_v=1$. Moreover, in the Grothendieck group of $\O_0^\Z$, $$\begin{aligned}
{} [T_sL(x)] {} &=v^{-1}[L(x)]+[L(sx)]+\sum_{\substack{y\in W, sy>y>x\\ \mu(x,y)\neq 0}}\mu(x,y)[L(y)]\\
&=\sum_{\substack{y\geq x\\ x\nleq sy<y}}(-v)^{\ell(x)-\ell(y)}P_{w_0y^{-1},w_0x^{-1}}(v^2)[\nabla(sy)]+\sum_{\substack{y\geq x\\ sy>y}}(-v)^{\ell(x)-\ell(y)+1}P_{yw_0,xw_0}(v^2)\bigl([\nabla(y)]-v^{-1}[\nabla(sy)]\bigr) .
\end{aligned}
$$
where $\mu(x,y)$ is the ``leading coefficient'' for Kazhdan-Lusztig polynomial $P_{x,y}(q)$ (see Section 2 for precise definition).
\end{thm}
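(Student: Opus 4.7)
The plan is to use Theorem~\ref{mainthm1} to transport the computation of $[T_sL(x)]$ into the Iwahori--Hecke algebra $\HH(W)$. By that theorem,
$$
\rho\bigl([T_sL(x)]\bigr) \;=\; \rho\bigl([L(x)]\bigr)\cdot H_s \;=\; \ucH_{w_0x^{-1}}\cdot H_s.
$$
Since $sx<x$, the identity $\ell(w_0 u)=\ell(w_0)-\ell(u)$ gives $w_0x^{-1}s>w_0x^{-1}$, placing the multiplication in the ``ascending'' case of the Kazhdan--Lusztig multiplication rules.

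For the first character formula, I would substitute the Section~2 expansion of $H_s$ in terms of $\ucH_s$ and $\ucH_e$ into the displayed identity and apply the classical rule
$$
\ucH_w\cdot\ucH_s \;=\; \ucH_{ws}+\sum_{\substack{z<w\\ zs<z}}\mu(z,w)\,\ucH_z\qquad(ws>w).
$$
With $w=w_0x^{-1}$, reindexing by $z=w_0y^{-1}$ converts the conditions $z<w_0x^{-1}$ and $zs<z$ into $y>x$ and $sy>y$ respectively. The $\mu$-symmetry $\mu(w_0y^{-1},w_0x^{-1})=\mu(x,y)$ (which follows from the identity $\mu(a,b)=\mu(a^{-1},b^{-1})$ together with the $w_0$-duality of Kazhdan--Lusztig polynomials) combined with $\rho([L(y)])=\ucH_{w_0y^{-1}}$ then yields the first character formula after applying $\rho^{-1}$; in particular the coefficient $v^{-1}$ of $[L(x)]$ is produced directly from the expansion of $H_s$ in the twisted basis.

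For the second character formula, I would expand each $\ucH_{w_0u^{-1}}$ occurring above in the standard basis $\{H_z\}$ using the twisted Kazhdan--Lusztig polynomials from Section~2, then apply $\rho([\nabla(y)])=H_{w_0y^{-1}}$. The case split of the theorem---over $y\ge x$ with $x\nleq sy<y$ versus $y\geq x$ with $sy>y$---corresponds, under the reindexation $z=w_0y^{-1}$, to the two situations $zs>z$ (contributing $H_{zs}$) and $zs<z$ (contributing $H_{zs}+(v^{-1}-v)H_z$). The signs $(-v)^{\ell(x)-\ell(y)}$ arise from the alternating sign structure of the twisted KL basis, and the reformulation of the coefficients as $P_{w_0y^{-1},w_0x^{-1}}(v^2)$ and $P_{yw_0,xw_0}(v^2)$ is effected via the inversion symmetry $P_{u,v}=P_{u^{-1},v^{-1}}$, the Bruhat duality $u\leq v\iff uw_0\geq vw_0$, and the identification of the inverse Kazhdan--Lusztig polynomial $\widetilde{P}_{u,v}$ with $P_{w_0v,w_0u}$.

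Finally, the structural assertions $\hd(T_sL(x))\cong L(x)\<-1\>$ and $[\soc T_sL(x):L(sx)]_v=1$ follow by combining the first character formula with the ungraded theorem of Andersen--Stroppel~\cite{AS}, which asserts $\hd(T_sL(x\cdot 0))=L(x\cdot 0)$ and $[\soc T_sL(x\cdot 0):L(sx\cdot 0)]=1$. Inspection of the first character formula shows that the only graded composition factors of $T_sL(x)$ with underlying labels $x$ and $sx$ are, respectively, a single copy of $L(x)\<-1\>$ and a single copy of $L(sx)$, so the graded shifts of the simple head and of the $L(sx)$-summand in the socle are uniquely pinned down. The principal technical obstacle is the meticulous bookkeeping required for the second formula: correctly separating the two subsums according to $zs<z$ versus $zs>z$ after reindexing, and matching the coefficients against the specific polynomials $P_{w_0y^{-1},w_0x^{-1}}(v^2)$ and $P_{yw_0,xw_0}(v^2)$ via the Kazhdan--Lusztig polynomial symmetries.
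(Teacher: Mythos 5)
Your proposal is correct and, for both character formulae, follows essentially the same route as the paper: push the computation into $\HH(W)$ via $\rho([T_sL(x)])=\ucH_{w_0x^{-1}}H_s$, apply the twisted Kazhdan--Lusztig multiplication rule $(\ref{2Hs})$ to get the first formula, then expand $\ucH_{w_0x^{-1}}$ in the standard basis, multiply by $H_s$, split according to whether $y^{-1}s\lessgtr y^{-1}$, and collapse terms via $P_{y,w}=P_{ys,w}$ when $ys<y<w<ws$ together with Lemma~\ref{Pmu}. The one place you genuinely diverge is in the structural assertions on the head and socle. You derive the first character formula up front and then read off the graded shifts from it, pinning them down against the ungraded Andersen--Stroppel theorems (simple head $L(x\cdot 0)$, socle multiplicity one of $L(sx\cdot 0)$): since $[T_sL(x):L(x)]_v=v^{-1}$ and $[T_sL(x):L(sx)]_v=1$, the only possible graded lifts of those simples occurring in $T_sL(x)$ are $L(x)\<-1\>$ and $L(sx)$, so the head is $L(x)\<-1\>$ and $[\soc T_sL(x):L(sx)]_v=1$. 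The paper instead establishes these facts before the formula: the head shift comes from $T_sP(x)\cong P(x)\<-1\>$ (Lemma~\ref{TsPx}) together with right-exactness, and the socle multiplicity comes from a derived-category computation $\dim\hom_A(L(sx),T_sL(x))=\dim\ext_A^1(L(sx),L(x)\<1\>)=\mu(x,sx)=1$ using $\mathcal{L}_1T_sL(sx)\cong L(sx)\<1\>$ and the autoequivalence $\mathcal{L}T_s$. Your route is more economical at this point (it bypasses Lemma~\ref{TsPx} and the derived Hom argument for these two claims), at the small cost of leaning more heavily on the ungraded \cite{AS} structure theorem; the paper's route is more self-contained in that it produces the socle multiplicity from first principles within the graded framework.
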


For each $s\in S$, let $Z_s$ be the $\Z$-graded Zuckerman functor associated to $s$ (see \cite[\S6.1]{Ma2}, \cite[\S3]{HX23}). Recall that $\mL_j Z_s=0$ for any $j>2$.
Set $\hat{Z}_s:=\circledast\circ Z_s\circ\circledast$, the $\Z$-graded dual Zuckerman functor. Then $\mathcal{R}_j\hat{Z}_s=0$ for any $j>2$. Our third main result of this paper below gives an algorithm to compute the graded character of $T_sM$ for any $M\in\O_0^\Z$.

\begin{thm}\label{mainthm3} Let $s\in S$.

(1) For any $x\in W$, we have $\mL_2 Z_sL(x)=\begin{cases} L(x)\<1\>, &\text{if $sx>x$;}\\ 0, &\text{if $sx<x$.}\end{cases}$. If $sx>x$ then $\mathcal{L}_1Z_sL(x)=0$; if $sx<x$, then $$
\begin{aligned}
{} [\mathcal{L}_1Z_sL(x)]&=v[\Delta(sx)]-v^2[\Delta(x)]-\sum_{\substack{z\in W\\ sz<z>x}}v^{\ell(z)-\ell(x)}P_{x,z}(v^{-2})[\mL_1Z_s L(z)]+(v+1)\sum_{\substack{z\in W\\ sz>z>x}}v^{\ell(z)-\ell(x)}P_{x,z}(v^{-2})[L(z)].
\end{aligned}
$$

(2) Let $M\in\O_0^\Z$. Suppose that in the Grothendieck group of $\O_0^\Z$, $$
[M/\hat{Z}_s(M)]=\sum_{x\in W}c_x(v,v^{-1})[L(x)], $$ where $c_x(v,v^{-1})\in\mathbb{N}[v,v^{-1}]$ for each $x\in W$.
Then in the Grothendieck group of $\O_0^\Z$ we have $$
[T_sM]=\sum_{\substack{x\in W\\ sx<x}}c_x(v,v^{-1})[T_sL(x)]-\sum_{\substack{x\in W\\ sx>x}}vc_x(v,v^{-1})[L(x)] .
$$
\end{thm}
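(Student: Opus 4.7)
The plan is to tackle the two parts of Theorem~\ref{mainthm3} by different routes: Kazhdan--Lusztig inversion for Part (1), and a long exact sequence argument relying on an Andersen--Stroppel-type identity for Part (2).

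For Part (1), I would first establish $\mathcal{L}_2 Z_s L(x)$. When $sx<x$, the vanishing follows by combining $Z_s L(x)=0$ with the cohomological dimension bound $\mathcal{L}_j Z_s=0$ for $j>2$, together with an inspection of the top-degree derived contribution on simples outside the parabolic subcategory. When $sx>x$, $L(x)$ lies in the parabolic subcategory, and a graded projective resolution of $L(x)$ together with graded duality identifies $\mathcal{L}_2 Z_s L(x)=L(x)\langle 1\rangle$ with the stated shift. The recursion for $\mathcal{L}_1 Z_s L(x)$ is then extracted by applying $\mathcal{L}Z_s$ termwise to the graded Kazhdan--Lusztig expansion $[L(x)] = \sum_{y\leq x}(-v)^{\ell(x)-\ell(y)}P_{y,x}(v^{-2})[\Delta(y)]$ in $K_0(D^b(\O_0^\Z))$: each $[\mathcal{L}_i Z_s \Delta(y)]$ is computable from the short exact sequence $0\to \Delta(sy)\to \Delta(y)\to Z_s\Delta(y)\to 0$ (when $sy>y$) via its long exact sequence and induction. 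Solving the Euler identity $[\mathcal{L}Z_s L(x)] = [Z_s L(x)] - [\mathcal{L}_1 Z_s L(x)] + [\mathcal{L}_2 Z_s L(x)]$ for $[\mathcal{L}_1 Z_s L(x)]$ then yields the claimed recursion.

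For Part (2), the crucial input is a graded Andersen--Stroppel identity $\mathcal{L}_1 T_s(-)\cong \hat{Z}_s(-)\langle 1\rangle$ together with $\mathcal{L}_i T_s=0$ for $i\geq 2$. First I would show that $T_s$ annihilates every $N\in\O_0^\Z$ satisfying $\hat{Z}_s N=N$: by induction on composition length, using $T_s L(y)=0$ for $sy>y$ and right-exactness of $T_s$. In particular $T_s(\hat{Z}_s M)=0$, and applying $T_s$ to the short exact sequence $0\to \hat{Z}_s M\to M\to M/\hat{Z}_s M\to 0$ together with the resulting long exact sequence yields $T_s M\cong T_s(M/\hat{Z}_s M)$. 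By maximality, $\hat{Z}_s(M/\hat{Z}_s M)=0$, so the key identity forces $\mathcal{L}_1 T_s(M/\hat{Z}_s M)=0$; hence $\mathcal{L}T_s(M/\hat{Z}_s M)$ is concentrated in degree zero and coincides with the underived $T_s(M/\hat{Z}_s M)$. Then by additivity of derived functors in the Grothendieck group,
\[
[T_s M] = [T_s(M/\hat{Z}_s M)] = \sum_x c_x(v,v^{-1})\,[\mathcal{L}T_s L(x)].
\]
For $sx<x$ the identity gives $\mathcal{L}_1 T_s L(x)=\hat{Z}_s L(x)\langle 1\rangle=0$, hence $[\mathcal{L}T_s L(x)]=[T_s L(x)]$; for $sx>x$ the identity gives $\mathcal{L}_1 T_s L(x)=L(x)\langle 1\rangle$ while $T_s L(x)=0$, hence $[\mathcal{L}T_s L(x)]=-v[L(x)]$. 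Substituting into the Grothendieck-group identity above produces the claimed formula.

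The principal obstacle is the graded Andersen--Stroppel identity $\mathcal{L}_1 T_s(-)\cong \hat{Z}_s(-)\langle 1\rangle$: while the ungraded version is standard (\cite{AS}), pinning down the precise shift $\langle 1\rangle$ in the graded setting requires tracking gradings carefully through Arkhipov's semiregular bimodule description of $T_s$ (or equivalently through the graded adjunction between $T_s$ and its right adjoint). In Part (1), the combinatorial recursion is largely bookkeeping once the Verma computations are in hand; the more delicate point there is fixing the grading shift in $\mathcal{L}_2 Z_s L(x)$, which is again handled via graded duality.
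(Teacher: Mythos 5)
The Part (2) argument is essentially the same as the paper's: you and the paper both isolate $M/\hat{Z}_s M$, use the identity $\mathcal{L}_1 T_s = \hat{Z}_s$ (so that the left derived functor collapses to the underived one on $M/\hat{Z}_s M$), establish $T_s M\cong T_s(M/\hat{Z}_s M)$ via $T_s\circ\hat{Z}_s=0$, and then split the Euler characteristic into the $s$-finite and $s$-free simple contributions using $\mathcal{L}_1T_sL(x)\cong L(x)\<1\>$ for $sx>x$ and $\mathcal{L}_1T_sL(x)=0$ for $sx<x$. The only stylistic difference is that you would establish $T_s\hat{Z}_s=0$ by a length induction whereas the paper simply cites Andersen--Stroppel's Corollary 5.8. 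You correctly flag the grading shift in $\mathcal{L}_1T_s$ as the delicate input; the paper sidesteps the full functorial identity $\mathcal{L}_1 T_s \cong \hat{Z}_s\<1\>$ and instead only proves the shift on simples (Lemma \ref{TsPx}), which is all that is used.

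Part (1) is where your route genuinely diverges and where there is a gap. Your plan is to expand $[L(x)]$ in the Verma basis and push $\mathcal{L}Z_s$ through termwise, extracting $[\mathcal{L}_1Z_sL(x)]$ from the Euler identity. But note first a sign-of-indexing error: with this paper's conventions $[\Delta(x)]=\sum_{z\geq x}v^{\ell(z)-\ell(x)}P_{x,z}(v^{-2})[L(z)]$, so the inversion runs over $y\geq x$, not $y\leq x$. More seriously, your route yields a \emph{closed-form} expression for $[\mathcal{L}_1Z_sL(x)]$ as a $\Z[v,v^{-1}]$-combination of Verma classes, whereas the statement to be proved is a \emph{recursion} involving $[\mathcal{L}_1Z_sL(z)]$ for $z>x$ with $sz<z$. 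Those two forms are not the same thing, and nothing in your sketch converts one into the other. The paper instead works with the short exact sequence $0\to K(x)\to\Delta(x)\to L(x)\to 0$ where $K(x)=\mathrm{rad}\,\Delta(x)$, then expands $[K(x)]=\sum_{z>x}v^{\ell(z)-\ell(x)}P_{x,z}(v^{-2})[L(z)]$ in \emph{simple} classes and applies the Euler characteristic of $\mathcal{L}Z_s$ to that expansion; this is precisely what produces the terms $[\mathcal{L}_1Z_sL(z)]$ (from the $s$-free $z$) and $(v+1)[L(z)]$ (from the $s$-finite $z$, using $[\mathcal{L}Z_sL(z)]=[L(z)]+v[L(z)]$ once $\mathcal{L}_2Z_sL(z)\cong L(z)\<1\>$ and $\mathcal{L}_1Z_sL(z)=0$ are in hand). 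Your computation of $\mathcal{L}_2Z_sL(x)$ is also considerably under-specified: ``graded projective resolution together with graded duality'' is not enough detail to pin down the shift $\<1\>$. The paper's proof of Lemma \ref{keylem3} needs a rather delicate argument: it compares $G_sK(x)\cong G_s\Delta(x)$, uses the exact sequences $0\to\mathcal{L}_1Z_s\to T_s\to T_sG_s\to 0$, and proves directly that the canonical map $\mathcal{L}_1T_sL(x)\cong L(x)\<1\>\to T_sK(x)$ is nonzero. Filling that in is the real content, and a gesture at duality does not yet do it.
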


Our fourth main result of this paper gives an analogue of Theorem \ref{mainthm1} for $\Z$-graded shuffling functors.

\begin{thm}\label{mainthm4} Let $\rho$ be the $\Z[v,v^{-1}]$-module isomorphism from the Grothendieck group of $D^b(\O_0^\Z)$ onto $\HH(W)$
defined by $$
\rho\Bigl([\nabla(x)\<k\>]\Bigr):=v^k H_{w_0x},\quad \forall\,x\in W, k\in\Z .
$$
Then the derived shuffling functors $\mL C_x$ gives rise to a categorical action of the Iwahori-Hecke algebra $\HH(W)$ on $D^b(\O_0^\Z)$ such that $$
\rho\bigl([(\mL C_x)M]\bigr)=\rho([M])H_x,\quad\forall\,x\in W, M\in\O_0^\Z .
$$
In particular, $\rho\bigl([(\mL C_x)\Delta(y)]\bigr)=H_{w_0y}H_x,\forall\,x,y\in W$. Moreover, $\rho\bigl([L(x)]\bigr)=\ucH_{w_0x}$.
\end{thm}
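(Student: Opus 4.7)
The plan is to mirror the proof of Theorem \ref{mainthm1}, replacing all twisting data with shuffling data. First I would observe that $\{[\nabla(x)\langle k\rangle]\mid x\in W,\,k\in\Z\}$ is a $\Z$-basis of the Grothendieck group of $D^b(\O_0^\Z)$ (with grading shift $\langle 1\rangle$ corresponding to multiplication by $v$) and that $\{v^kH_w\mid w\in W,\,k\in\Z\}$ is a $\Z$-basis of $\HH(W)$. The recipe $[\nabla(x)\langle k\rangle]\mapsto v^kH_{w_0x}$ therefore at once defines a $\Z[v,v^{-1}]$-module isomorphism $\rho$.

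Next I would reduce the categorical-action identity $\rho([(\mL C_x)M])=\rho([M])H_x$ to the case where $x=s\in S$. Derived $\Z$-graded shuffling functors satisfy $\mL C_x\circ\mL C_y\cong\mL C_{xy}$ whenever $\ell(xy)=\ell(x)+\ell(y)$, which matches $H_xH_y=H_{xy}$ under the same length condition; moreover any $M\in\O_0^\Z$ can be written in the $\nabla$-basis after passing to the Grothendieck group. Thus the whole statement follows from checking that, for every $s\in S$ and every $y\in W$,
\[[(\mL C_s)\nabla(y)]=\rho^{-1}\bigl(H_{w_0y}H_s\bigr).\]
Using $\ell(w_0ys)=\ell(w_0y)-1$ precisely when $ys>y$, together with the quadratic relation $H_s^2=(v^{-1}-v)H_s+H_e$, the right-hand side expands to $[\nabla(ys)]$ when $ys<y$ and to $(v^{-1}-v)[\nabla(y)]+[\nabla(ys)]$ when $ys>y$.

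The main technical step is verifying these two character formulas. Both come from the defining short exact sequence of $C_s$ as the cokernel of the adjunction $\id\to\theta_s$, combined with the $\theta_s$-filtration of $\nabla(y)$ in the $\Z$-graded setting from \cite{CMZ}. When $ys<y$ the derived shuffling functor $\mL C_s$ applied to $\nabla(y)$ is concentrated in a single cohomological degree and yields $\nabla(ys)$; when $ys>y$ the cohomology sits in two adjacent degrees whose Euler characteristic produces the factor $v^{-1}-v$ in front of $[\nabla(y)]$. The main obstacle here is tracking graded shifts correctly; this is the analogue of the same bookkeeping performed for twisting in the proof of Theorem \ref{mainthm1}.

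Finally I would deduce $\rho([L(x)])=\ucH_{w_0x}$. On one hand, the $\Z$-graded duality $\circledast$ fixes $L(x)$ and, under $\rho$, corresponds to the bar involution of $\HH(W)$; hence $\rho([L(x)])$ is bar-invariant. On the other hand, the graded Kazhdan--Lusztig character formula expresses $[L(x)]$ as a $\Z[v,v^{-1}]$-linear combination of $[\nabla(y)]$'s indexed by $y\geq x$ with Kazhdan--Lusztig polynomial coefficients; translating via $\rho$ produces an element of $\HH(W)$ with leading coefficient $1$ at $H_{w_0x}$ and lower-order contributions supported at $H_{w_0y}$ for $y>x$, i.e.\ in the Bruhat ideal $\leq w_0x$. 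Bar-invariance together with this triangularity is exactly the uniqueness characterization of the twisted Kazhdan--Lusztig basis element $\ucH_{w_0x}$ defined in Section~2, so the two must coincide.
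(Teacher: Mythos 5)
Your proposal matches the paper's proof in all its main steps: reduce to a simple reflection via the braid relations, then verify the quadratic relation and the identity $[(\mL C_s)\nabla(y)]=\rho^{-1}(H_{w_0y}H_s)$ by case analysis on $ys\lessgtr y$ using the adjunction sequence (\ref{adj}) and the $\Z$-graded $\theta_s$-filtration of dual Verma modules (these are exactly Lemmas \ref{Csaction0}--\ref{Csaction2}). That part is fine and essentially identical to what the paper does.

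Where you deviate is the last step, $\rho([L(x)])=\ucH_{w_0x}$. The paper substitutes the graded Kazhdan--Lusztig character formula (\ref{Lnablax}) into the definition of $\rho$ and matches the resulting expansion term-by-term against (\ref{uch}). You instead argue by uniqueness: bar-invariance plus triangularity pins down $\ucH_{w_0x}$. This is a legitimate alternative, and arguably cleaner since it avoids matching coefficients. However, it rests on the claim that $\rho$ intertwines the duality $\circledast$ with the bar involution, which you assert without proof. This is not automatic: the Remark following Lemma \ref{keylem02} explicitly notes that the sibling isomorphism $\varphi$ (which sends $H_w\mapsto[\Delta(w)]$) fails to intertwine $\circledast$ with bar. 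For $\rho$ it does work — this is in effect the content of the $\psi(H_{w^{-1}}^{-1})=[\Delta(w_0w)]$ statement in Lemma \ref{keylem02}, since $\Delta(x)=\nabla(x)^{\circledast}$ and $\overline{H_{w_0x}}=H_{(w_0x)^{-1}}^{-1}$ — but you should either cite that lemma or reproduce the $R$-polynomial comparison it uses. You should also spell out that the coefficients of $H_{w_0y}$ ($y>x$) in $\rho([L(x)])$ lie in $v^{-1}\Z[v^{-1}]$, which follows from the degree bound $\deg P_{w_0y,w_0x}\leq(\ell(y)-\ell(x)-1)/2$; this is the triangularity condition the uniqueness characterization actually requires, and ``lower-order contributions supported in the Bruhat ideal'' by itself is not enough.
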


Our fifth main result of this paper presents a graded character formula for the shuffling simple module $C_sL(x)$ in terms of Kazhdan-Lusztig polynomials.

\begin{prop}\label{mainprop6} Let $s\in S$ and $x\in W$. Then $C_sL(x)\neq 0$ if and only if $xs<x$. If $xs<s$, then $\mathcal{L}_1C_sL(x)=0$, and in the Grothendieck group of $\O_0^\Z$, $$\begin{aligned}
{} [C_sL(x)] {} &=v^{-1}[L(x)]+[L(xs)]+\sum_{\substack{y\in W, ys>y>x\\ \mu(x,y)\neq 0}}\mu(x,y)[L(y)]\\
&=\sum_{\substack{y\geq x\\ x\nleq ys<y}}(-v)^{\ell(x)-\ell(y)}P_{w_0y,w_0x}(v^2)[\nabla(ys)]+\sum_{\substack{y\geq x\\ ys>y}}(-v)^{\ell(x)-\ell(y)+1}P_{w_0y,w_0x}(v^2)\bigl([\nabla(y)]-v^{-1}[\nabla(ys)]\bigr) .
\end{aligned}
$$
\end{prop}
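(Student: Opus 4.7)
The proof closely parallels that of Theorem~\ref{mainthm2}, with the shuffling categorification of Theorem~\ref{mainthm4} replacing the twisting categorification of Theorem~\ref{mainthm1}. By Theorem~\ref{mainthm4},
\[
\rho\bigl([\mL C_s L(x)]\bigr) \;=\; \rho([L(x)])\cdot H_s \;=\; \ucH_{w_0 x}\, H_s,
\]
so the plan is to evaluate $\ucH_{w_0 x} H_s$ using the multiplication rule for the twisted Kazhdan--Lusztig basis developed in Section~2, namely
\[
\ucH_w H_s \;=\;
\begin{cases}
\ucH_{ws} + v^{-1}\ucH_w + \displaystyle\sum_{\substack{z < w\\ zs < z}}\mu(z,w)\,\ucH_z & \text{if } ws>w,\\
-v\,\ucH_w & \text{if } ws<w,
\end{cases}
\]
and to translate via the dictionary $\rho^{-1}(\ucH_{w_0 y}) = [L(y)]$, $\rho^{-1}(H_{w_0 y}) = [\nabla(y)]$. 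The combinatorial bridge is the equivalence $xs \gtrless x \iff (w_0 x)s \lessgtr w_0 x$.

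If $xs > x$, then $(w_0 x)s < w_0 x$, so the second branch gives $[\mL C_s L(x)] = -v[L(x)]$. Combined with the facts that $\mL_i C_s = 0$ for $i \geq 2$ and that $\mL_1 C_s L(x)$ is an honest module (so has a nonnegative class), this forces $[C_s L(x)] = 0$, hence $C_s L(x) = 0$. If instead $xs < x$, then $(w_0 x)s > w_0 x$ and the first branch applies. Re-indexing the $\mu$-sum via $z = w_0 y$ (so that $zs<z \iff ys > y$ and $z < w_0 x \iff y > x$), and using the symmetry $\mu(w_0 y, w_0 x) = \mu(x,y)$ which follows from $P_{u,v} = P_{w_0 v, w_0 u}$, one obtains
\[
\rho^{-1}(\ucH_{w_0 x} H_s) \;=\; v^{-1}[L(x)] + [L(xs)] + \sum_{\substack{y > x\\ ys > y}} \mu(x,y)\,[L(y)].
\]
The vanishing $\mL_1 C_s L(x) = 0$ in this case---deducible from the structure of $C_s$ on a simple with $xs<x$ (cf.\ \cite{Ir93}, \cite{MaStro05}) or by a Koszul-duality comparison with Theorem~\ref{mainthm2}---then identifies this class with $[C_s L(x)]$ and yields the first character formula.

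For the second formula, one expands $\ucH_{w_0 x}$ and $\ucH_{w_0(xs)}$ in the standard basis $\{H_z\}$ via the defining expression from Section~2, whose coefficients are, up to sign, the Kazhdan--Lusztig polynomials $P_{w_0 z, w_0 w}(v^2)$, and then applies $\rho^{-1}(H_z) = [\nabla(w_0 z)]$ with re-indexing $y = w_0 z$. Grouping, for each $y \geq x$, the $[\nabla(y)]$ contribution coming from $\ucH_{w_0 x}$ together with the $[\nabla(ys)]$ contribution coming from $\ucH_{w_0(xs)}$, and splitting the resulting sum according to the dichotomy $ys > y$ vs.\ $ys < y$ (with the further subdivision $x\leq ys$ vs.\ $x\nleq ys$ on the latter side), produces the stated expression. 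The main technical obstacle is establishing the vanishing $\mL_1 C_s L(x) = 0$ in the $xs<x$ case together with the careful pairing of $[\nabla(y)]$ and $[\nabla(ys)]$ coefficients in the correct Bruhat-combinatorial pattern; once these are settled, the remainder is routine bookkeeping.
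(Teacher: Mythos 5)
Your overall strategy is the same as the paper's: both invoke Theorem~\ref{mainthm4} to reduce the first formula to the product $\ucH_{w_0x}H_s$, use the multiplication rule for $\ucH_w H_s$ and re-index via $y\mapsto w_0y$, and derive the second formula by expanding $\ucH_{w_0x}$ in the standard basis and splitting the resulting sum by the dichotomy $ys\gtrless y$. That part of the proposal is sound and essentially identical to the paper.

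There are, however, two concrete gaps. First, for the ``$xs>x \Rightarrow C_sL(x)=0$'' direction you argue from $[C_sL(x)]-[\mathcal{L}_1C_sL(x)]=-v[L(x)]$ together with nonnegativity of both classes, but this does not force $[C_sL(x)]=0$: for instance $[C_sL(x)]=[L(y)]$ and $[\mathcal{L}_1C_sL(x)]=[L(y)]+v[L(x)]$ is consistent with the equation and with nonnegativity while having $C_sL(x)\neq 0$. The inference needs additional input, most simply the observation that $C_sL(x)$ is by definition a quotient of $\theta_sL(x)$ and $\theta_sL(x)=0$ precisely when $xs>x$; alternatively one can note that $\mathcal{L}_1C_sL(x)=\ker\bigl(\adj_s L(x)\bigr)\subseteq L(x)\<1\>$, whence $\mathcal{L}_1C_sL(x)\in\{0,L(x)\<1\>\}$, and the class identity then pins both sides down. (The paper proves the other implication instead: $C_sL(x)\neq 0\Rightarrow\theta_sL(x)\neq 0\Rightarrow\huH_x\uH_s\neq 0$, and then uses Lusztig's cell-theoretic criterion to conclude $s\in\mcR(x)$, citing \cite{CMZ} for the converse.) Second, the crucial vanishing $\mathcal{L}_1C_sL(x)=0$ for $xs<x$ is not actually proved---you defer to \cite{Ir93}, \cite{MaStro05}, or a ``Koszul-duality comparison'' without giving an argument. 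The paper has a short concrete proof here that you should supply: for $xs<x$, Lemma~\ref{Csaction2} gives $\mathcal{L}_1C_s\nabla(x)=0$, and Lemma~\ref{Csaction1}(2) gives $\mathcal{L}_2C_s=0$; applying the long exact sequence for $\mathcal{L}C_s$ to $0\to L(x)\to\nabla(x)\to\nabla(x)/L(x)\to 0$ yields
\[
0=\mathcal{L}_2C_s\bigl(\nabla(x)/L(x)\bigr)\to\mathcal{L}_1C_sL(x)\to\mathcal{L}_1C_s\nabla(x)=0,
\]
forcing $\mathcal{L}_1C_sL(x)=0$. With these two steps made rigorous the rest of your argument goes through.
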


%In particular, we answer a question of \cite[Question 56]{CMZ}. That is

The content is organised as follows. In Section 2 we first recall some preliminary results on the BGG category $\O$ as well as its $\Z$-graded analogue, and some basic property on the twisting functors and their $\Z$-graded lift.  Then we recall the Kazhdan-Lusztig basis, twisted Kazhdan-Lusztig basis and their dual bases following \cite{KL} and \cite{L0}. We also recall the categorification of Hecke algebras using indecomposable projective functors in Lemma \ref{keylem0} as well as its Ringel dual version in Lemma \ref{keylem02}. In Section 3 we explicitly describe the graded shifts occurring in the action of the $\Z$-graded twisting functors on dual Verma modules and simple modules in Lemmas \ref{Tsaction1}, \ref{Tsaction2}. Then we give the proof of our first main result Theorem \ref{mainthm1}. Using Theorem \ref{mainthm1}, we then give the proof of the second main result Theorem \ref{mainthm2} in the same section, which gives two $\Z$-graded character formulae of $T_sL(x)$ for each simple reflection $s$. We explicitly describe the action of the second derived $\Z$-graded Zuckerman functors on simple modules in Lemma \ref{keylem3}, and presents a recursive formula to calculate the action of the first derived $\Z$-graded Zuckerman functors on simple modules in the Grothendieck group in Lemma \ref{0lem}. The third main result Theorem \ref{mainthm3} gives an algorithm to compute $T_sM$ in the Grothendieck group for any $M\in\O_0^\Z$. In Section 4, we first describe in Lemmas \ref{Csaction0} the action of $\Z$-graded shuffling functors on Verma modules and dual Verma modules. Then we give the proof of our fourth and fifth main results Theorems \ref{mainthm4}, \ref{mainprop6} which generalize Theorem \ref{mainthm1}, Theorem \ref{mainthm2} to the $\Z$-graded Shuffling functors case.

\section{Preliminary}

Let $\fg$ be a finite dimensional complex semisimple Lie algebra with a triangular decomposition $\fg=\fn\oplus\fh\oplus\fn^{-}$, where $\fh$ is a fixed Cartan subalgebra and $\fb:=\fh\oplus\fn$ is the corresponding Borel subalgebra. Let $U(\fg), U(\fn)$ be the universal enveloping algebra of $\fg$ and $\fn$ respectively. The BGG category $\O$ is the full subcategory of the category of $U(\fg)$-module which consists of all finitely generated $U(\fg)$-module $M$ satisfying the following conditions:\begin{enumerate}
\item[1)] $M$ has a weight space decomposition $M=\oplus_{\lam\in\fh^*}M_\lam$, where $M_\lam:=\{v\in M|hv=\lam(h)v,\forall\,h\in\fh\}$; and
\item[2)] the action of $U(\fn)$ on $M$ is locally finite .
\end{enumerate}

%For each $x\in W$, $P(x)$ is by definition the projective cover of $L(x)$.

Let $\Pi^\vee:=\{\alpha^\vee|\alpha\in\Phi\}$ be the set of simple coroots. Let $\Lam$ be the set of integral weights. That is, $$
\Lam:=\bigl\{\lam\in\fh^*\bigm|\<\alpha^\vee,\lam\>\in\Z,\forall\,\alpha\in\Pi\bigr\} .
$$
We consider the integral part $\O_\Lam$ of the BGG category $\O$ which consists of all modules in $\O$ with weights in $\Lam$. We use $\Lam/(W,\cdot)$ to denote the set of orbits on $\Lam$ under the dot action of $W$.
There is a block decomposition as follows: $$
\O_\Lam=\bigoplus_{\lam\in\Lam/(W,\cdot)}\O_\lam .
$$
In this paper, we shall only be interested in the regular integral block $\O_0$. For any finite dimensional $\fg$-module $V$, we shall call any direct summand of a functor of the form $-\otimes V$ a projective functor. By \cite[Theorem 3.3]{BG}, isomorphism classes of indecomposable projective endofunctor on $\O_0$ are in bijection with elements in $W$. More precisely, for each $w\in W$, there is a unique (up to isomorphism) indecomposable projective endofunctor $\theta_w: \O_0\rightarrow\O_0$ such that $\theta_w(\Delta(0))\cong P(w)$. For each $w\in W$, the functors $\theta_w, \theta_{w^{-1}}$ are biadjoint to each other. Moreover, by \cite{BG}, the projective functor $\theta_w$ preserve both $\mathcal{F}(\Delta)$ and $\mathcal{F}(\nabla)$ and hence the category of tilting modules, where $\mathcal{F}(\Delta)$ (resp., $\mathcal{F}(\nabla)$) denotes the full subcategory of $\O$ which consists of all modules having a $\Delta$-flag (resp., having a $\nabla$-flag).

\begin{dfn} For each $w\in W$, we use $e_w$ to denote the unique degree $0$ homogeneous primitive idempotent in $A$ corresponding to $L(w)$. That is, $e_w$ corresponds to the projection from $\bigoplus_{x\in W}P(x)$ onto $P(w)$.
\end{dfn}

For each $w\in W$, the projective functor $\theta_w$ also admits a graded lift which will be denoted by the same notation $\theta_w$. We use $\mathcal{F}(\mathbb{\Delta})$ to denote the full subcategory of $\O_0^\Z$ which consists of all modules having a $\Z$-graded $\Delta$-flag (i.e., a filtration in $A\gmod$ such that each successive quotient being isomorphic to some modules of the form ${\Delta}(w)\<k\>$) for some $w\in W$ and $k\in\Z$). Then by \cite{Stro2}, the $\Z$-graded projective functor $\theta_w$ preserves the subcategory $\mathcal{F}(\mathbb{\Delta})$.

Let $S\subset W$ be the set of simple reflections in $W$. The set $S$ generates the Weyl group $W$. A word $w=s_{i_{1}}s_{i_{2}}\ldots s_{i_{k}}$, where $s_{i_a}\in S$ for each $1\leq a\leq k$, is called a reduced expression of $w$ if $k$ is minimal; in this case we say that $w$ has length $k$ and we write $\ell(w)=k$. For each $s\in S$, let $T_s$ be the corresponding twisting functor, see e.g., \cite{Ar, AS, KhM}. Recall that twisting functors are right exact and they satisfy braid relations (\cite[Theorem 2]{KhM}), which allows us to define (up to isomorphism of functors) \begin{equation}\label{braid1}
T_w:=T_{s_{i_1}}T_{s_{i_2}}\cdots T_{s_{i_k}},
\end{equation}
where $s_{i_1}s_{i_2}\cdots s_{i_k}$ is a reduced expression of $w$. By \cite[Lemma 2.1(5)]{AS}, each twisting functor $T_s$ commutes with any projective functor $\theta$. It follows that for each $w\in W$, the functor $T_w$ commutes with the projective functor $\theta$ as well. That is, $T_w\circ\theta\cong\theta\circ T_w$. For each $w\in W$, the twisting functor $T_w$ is right exact. For each $i\in\N$, we use $\mL_iT_w$ to denote the $i$th left derived functor of $T_w$.

\begin{lem}\text{(\cite[Theorem 2.2]{AS})}\label{lemacylic1} For any $s\in S$ and $i>1$, we have $\mL_iT_s=0$. Moreover, for any $w\in W$, $x\in W$ and $j>0$, we have $\mL_jT_w\Delta(w)=0$.
\end{lem}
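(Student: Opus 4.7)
The plan is to address the two assertions separately, both exploiting Arkhipov's realization of $T_s$ via tensoring with a semi-regular bimodule.

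For the first assertion, recall that for $s=s_\alpha$ the twisting functor $T_s$ is given (up to an $s$-twist of the outer $U(\fg)$-action) by $S_\alpha\otimes_{U(\fg)}(-)$, where the $U(\fg)$-bimodule $S_\alpha$ sits in a short exact sequence
$$
0\to U(\fg)\to U(\fg)_{(f_\alpha)}\to S_\alpha\to 0
$$
with $U(\fg)_{(f_\alpha)}$ the Ore localization at the negative simple root vector $f_\alpha$. Since both outer terms are flat as right $U(\fg)$-modules (localization is flat), $S_\alpha$ has right flat dimension at most $1$, whence $\mL_i T_s=\mathrm{Tor}^{U(\fg)}_i({}^{s}S_\alpha,-)$ vanishes for every $i\ge 2$. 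As a byproduct of the same sequence, $\mL_1 T_s(M)$ is identified (up to the $s$-twist) with the $f_\alpha$-torsion submodule of $M$, so any module on which $f_\alpha$ acts injectively is automatically $T_s$-acyclic.

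For the second assertion I would induct on $\ell(w)$, the base case $w=e$ being trivial since $T_e=\id$. For the inductive step, fix a reduced expression $w=s_{i_1}s_{i_2}\cdots s_{i_k}$; by the braid relation \eqref{braid1} the functor $T_w$ is the corresponding composition of the $T_{s_{i_j}}$'s, so it is enough to peel off one simple twisting functor from the appropriate end and reduce to $T_{w'}\Delta(w')$ with $\ell(w')=\ell(w)-1$. Because $\Delta(w)$ is free as a $U(\fn^-)$-module, every simple root vector acts injectively on it, so the first part guarantees that $\Delta(w)$ is $T_s$-acyclic for every simple reflection $s$. A direct computation on the twisted localization (paralleling the rank-one $\fsl_2$-case, and using crucially that $w$ has a reduced expression of the chosen shape) identifies the single $T_{s_{i_j}}\Delta(w)$ with a Verma module $\Delta(w')$ of length $\ell(w)-1$. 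The Grothendieck spectral sequence for composition of derived functors then collapses to
$$
\mL T_w\Delta(w)\;\cong\;\mL T_{w'}\Delta(w'),
$$
and the inductive hypothesis yields $\mL_j T_w\Delta(w)=0$ for every $j>0$.

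The main obstacle in this plan is the object-level identification $T_{s_{i_j}}\Delta(w)\cong\Delta(w')$, which is strictly stronger than the corresponding identity in the Grothendieck group: it requires a careful weight-space analysis of the twisted localized Verma module, in which the combinatorics of the chosen reduced expression of $w$ (guaranteeing the right pairing with $\alpha_{i_j}^{\vee}$) and the freeness of Verma modules over $U(\fn^-)$ are the decisive ingredients. All remaining pieces are formal manipulations of derived functors and the short exact sequence defining $S_\alpha$.
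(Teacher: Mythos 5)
Your treatment of the first assertion is correct and is the standard argument: the short exact sequence of $U(\fg)$-bimodules $0\to U(\fg)\to U(\fg)_{(f_\alpha)}\to S_\alpha\to 0$ shows that $S_\alpha$ has right flat dimension at most one, so $\mL_iT_s=\mathrm{Tor}^{U(\fg)}_i({}^sS_\alpha,-)$ vanishes for $i\ge 2$, and $\mL_1T_sM$ is (up to the $s$-twist) the $f_\alpha$-torsion in $M$. Note that the paper itself offers no proof of this lemma, citing \cite[Theorem~2.2]{AS} directly, so there is no internal proof to compare against. Note also that the printed statement quantifies over an $x$ that never reappears; as the lemma is subsequently used (in Corollary~\ref{acuclicor} and in the proof of Theorem~\ref{mainthm1}), the assertion actually needed is $\mL_jT_w\Delta(x)=0$ for all $w,x\in W$ and $j>0$.

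The inductive step for the second assertion, however, contains a genuine error. You claim that $T_{s_{i_j}}\Delta(w)$ is identified with a Verma module $\Delta(w')$ of length $\ell(w)-1$. This cannot happen. Whenever $T_s$ of a Verma module is again a Verma module, one is in the case $sx>x$ and $T_s\Delta(x)\cong\Delta(sx)$: the length goes \emph{up}, never down (see Lemma~\ref{Tsaction1}). And whenever $sx<x$, the module $T_s\Delta(x)$ is a properly twisted Verma module which is not isomorphic to any $\Delta(y)$; indeed, by Proposition~\ref{domVerma} its class in the Grothendieck group equals $[\Delta(sx)]+(v^{-1}-v)[\Delta(x)]$, which is not the class of any Verma module. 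Consequently the reduction to ``$T_{w'}\Delta(w')$ with $\ell(w')=\ell(w)-1$'' never gets started, and the Grothendieck spectral sequence does not collapse for the reason you give. The missing idea is a stronger inductive invariant: one must show that $T_{w'}\Delta(x)$ is a twisted Verma module, free of rank one over $U(\fn^{-})$, so that the relevant negative root vector still acts injectively and the next $\mL_1T_s$ vanishes. With that invariant the spectral-sequence collapse you envision does go through, but the class of modules carried through the induction has to be enlarged from Verma modules to twisted Verma modules (equivalently, to $U(\fn^{-})$-free modules). As written, your argument establishes only degenerate special cases.
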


\begin{cor}\label{mltwisted} For any $x,y\in W$ and $j\in\N$, we have $\theta_x\circ(\mL_j T_y)\cong(\mL_j T_y)\circ\theta_x$.
\end{cor}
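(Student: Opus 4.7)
The plan is to reduce the claim to the ungraded natural isomorphism $T_y\circ\theta\cong\theta\circ T_y$ (from \cite[Lemma~2.1(5)]{AS} and its iterated/graded refinement) by applying both sides to a projective resolution of $M$ and reading off the $(-j)$th cohomology.

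First I would exploit two standard properties of the indecomposable projective functor $\theta_x$. It is exact, being a direct summand of $-\otimes V$ for some finite-dimensional $\fg$-module $V$; and it preserves projectives, because its biadjoint $\theta_{x^{-1}}$ is exact, so the adjunction $\Hom(\theta_x P,-)\cong\Hom(P,\theta_{x^{-1}}-)$ shows the left hand side is exact whenever $P$ is projective. Consequently, if $P^\bullet\to M$ is a projective resolution in $\O_0^\Z$, then $\theta_x P^\bullet\to \theta_x M$ is again a projective resolution.

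Next, fix a reduced expression $y=s_{i_1}\cdots s_{i_k}$ and iterate the commutation $T_{s_{i_j}}\circ\theta_x\cong\theta_x\circ T_{s_{i_j}}$ to obtain a natural isomorphism of right-exact functors $T_y\circ\theta_x\cong\theta_x\circ T_y$. Applied termwise to the resolution $\theta_x P^\bullet$, this gives a quasi-isomorphism of complexes $T_y\theta_x P^\bullet\cong\theta_x T_y P^\bullet$. Taking $(-j)$th cohomology, the left side computes $\mL_j T_y(\theta_x M)$ (since $\theta_x P^\bullet$ is a projective resolution of $\theta_x M$), while the right side equals $\theta_x(\mL_j T_y M)$ (since $\theta_x$ is exact and therefore commutes with cohomology). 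This yields the desired natural isomorphism $\theta_x\circ\mL_j T_y\cong \mL_j T_y\circ\theta_x$.

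The only delicate point I expect is verifying that the natural isomorphism $T_y\circ\theta_x\cong\theta_x\circ T_y$ lifts from the ungraded to the $\Z$-graded setting without introducing a grading shift. This should be forced by the canonical normalisations of the graded lifts of $\theta_x$ and the $T_{s_i}$ (both are pinned down by their action on $\Delta(e)$ and by the fact that they preserve $\Delta$-flags), so that the ungraded natural transformation automatically comes from a graded one of degree zero; no further obstacle arises, and the argument applies uniformly to all $j\in\N$.
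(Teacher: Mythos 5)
Your proof is correct and follows essentially the same route as the paper: establish the $j=0$ commutation $T_y\circ\theta_x\cong\theta_x\circ T_y$ (by iterating the simple-reflection case from \cite{AS}), then use that $\theta_x$ is exact and preserves projectives to lift the isomorphism to the level of left derived functors. The paper's proof is just a terser statement of the same argument; your extra care about the graded normalisation is a reasonable, if optional, remark.
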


\begin{proof} Since twisting functor commutes with the projective functor, it follows that the corollary holds for $j=0$. Since $\theta_x$ is an exact and sends projective to projective, we can thus deduce that $\theta_x\circ(\mL T_y)\cong(\mL T_y)\circ\theta_x$ as functors on $D^b(\O_0^\Z)$, from which we see the corollary holds for all $j\in\N$.
\end{proof}

\begin{cor}\label{acuclicor} Let $w_1,w_2\in W$ with $\ell(w_1w_2)=\ell(w_1)+\ell(w_2)$. Then for any exact complex in $P^\bullet\in K^+({\rm Proj})$, $T_{w_2}P^\bullet$ is again an exact complex and an acylic complex for the functor $T_{w_1}$.
\end{cor}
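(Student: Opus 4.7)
The claim packages two assertions about $T_{w_2}P^\bullet$: (a) it is again an exact complex, and (b) it is term-wise $T_{w_1}$-acyclic, so that applying $T_{w_1}$ to it computes $\mL T_{w_1}$. My plan is to reduce both assertions to the acyclicity of individual indecomposable projectives under the relevant derived twisting functors, and then to invoke Lemma \ref{lemacylic1} (in the form $\mL_j T_w\Delta(x)=0$ for $j>0$ whenever $\ell(wx)=\ell(w)+\ell(x)$) together with the commutation Corollary \ref{mltwisted}.

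For part (a), every indecomposable projective $P(x)$ is, up to a graded shift, of the form $\theta_x\Delta(e)$ (this is exactly how the projective functors $\theta_x$ were defined). Corollary \ref{mltwisted} then yields $\mL_j T_{w_2}P(x)\cong\theta_x(\mL_j T_{w_2}\Delta(e))$, and Lemma \ref{lemacylic1} forces $\mL_j T_{w_2}\Delta(e)=0$ for $j>0$ (the length condition $\ell(w_2\cdot e)=\ell(w_2)$ is automatic). Thus every projective in $\O_0^\Z$ is $T_{w_2}$-acyclic. Since $P^\bullet\in K^+(\mathrm{Proj})$ is exact, the complex $T_{w_2}P^\bullet$ represents $\mL T_{w_2}(P^\bullet)\simeq 0$ in $D^b(\O_0^\Z)$, and hence is itself exact.

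For part (b), I first compute $T_{w_2}P(x)\cong\theta_x T_{w_2}\Delta(e)\cong\theta_x\Delta(w_2)$, where the first isomorphism uses that twisting functors commute with projective functors (\cite[Lemma 2.1(5)]{AS}) and the second is the standard identification $T_{w_2}\Delta(e)\cong\Delta(w_2)$ (both up to a graded shift). Applying Corollary \ref{mltwisted} once more,
\[
\mL_j T_{w_1}\bigl(T_{w_2}P(x)\bigr)\cong\theta_x\bigl(\mL_j T_{w_1}\Delta(w_2)\bigr).
\]
The hypothesis $\ell(w_1w_2)=\ell(w_1)+\ell(w_2)$ is precisely the length additivity required to apply Lemma \ref{lemacylic1}, so $\mL_j T_{w_1}\Delta(w_2)=0$ for $j>0$, and hence every term $T_{w_2}P^i$ is $T_{w_1}$-acyclic, which is what was needed.

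The main bookkeeping issue I expect is tracking graded shifts: the identifications $P(x)\cong\theta_x\Delta(e)$ and $T_{w_2}\Delta(e)\cong\Delta(w_2)$ hold only up to shifts $\<k\>$, but since $\theta_x$, $T_w$, and each $\mL_j T_w$ commute with grading shifts, these shifts pass harmlessly through every vanishing statement. A secondary point is the reduction from an arbitrary projective to the indecomposable case $P(x)$, but this is immediate since $\mL_j T_{w}$ is an additive functor and it suffices to check acyclicity on a complete set of indecomposable projectives.
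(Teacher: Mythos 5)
Your proof is correct but takes a genuinely different route from the paper. The paper's proof is shorter and leans on two facts: graded projectives have $\Delta$-flags, so by Lemma~\ref{lemacylic1} they are $T_w$-acyclic for every $w$; and the braid relation $T_{w_1}T_{w_2}\cong T_{w_1w_2}$, which turns the $T_{w_1}$-acyclicity claim into the exactness of $T_{w_1w_2}P^\bullet$, proved by the same acyclicity-of-projectives argument. You instead write $P(x)\cong\theta_x\Delta(e)$ and commute $\theta_x$ past the derived twisting functor (Corollary~\ref{mltwisted}), reducing both assertions to the acyclicity of the Verma modules $\Delta(e)$ and $\Delta(w_2)$. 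Your version has the advantage of exhibiting directly that each individual term $T_{w_2}P^i\cong\theta_x\Delta(w_2)$ is $T_{w_1}$-acyclic, which is literally what ``acyclic complex for $T_{w_1}$'' asserts, whereas the paper establishes only that $T_{w_1}(T_{w_2}P^\bullet)$ is exact. One small misattribution to flag: you read Lemma~\ref{lemacylic1} as saying $\mL_j T_w\Delta(x)=0$ only when $\ell(wx)=\ell(w)+\ell(x)$, but the underlying result \cite[Theorem~2.2]{AS} gives acyclicity of $T_w$ on \emph{any} module with a Verma flag, with no length constraint (the ``$\Delta(w)$'' in the lemma is a typo for ``$\Delta(x)$''). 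This does not affect the validity of your argument here, since the length hypothesis is assumed anyway; but it means the length hypothesis is doing no work in your proof, whereas in the paper's proof it is essential for the braid relation $T_{w_1}T_{w_2}\cong T_{w_1w_2}$. In fact your argument proves the acyclicity statement without the length hypothesis.
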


\begin{proof} Since each graded projective module $P$ has a $\Z$-graded $\Delta$-filtration, it follows from Lemma \ref{lemacylic1} that $(\mL_1T_w)M=0$ and $T_w$ is right exact that $T_{w_1}P^\bullet$ is again an exact complex. As a result, for any exact complex $P^\bullet\in K^+({\rm Proj})$, we see that $\mL_j(T_{w_1}T_{w_2}P^\bullet)=\mL_j(T_{w_1w_2}P^\bullet)=0$ for any $j>0$. Note that $\mL_0(T_{w_1w_2}P^\bullet)=0$ holds because $P^\bullet$ is exact and $T_{w_1w_2}$ is right exact. This proves the corollary.
\end{proof}

For each $s\in S$, the twisting functor $T_s$ has a right adjoint $G_s$---the Joseph's completion functor. For a reduced expression $s_{i_1}s_{i_2}\cdots s_{i_k}$ of $w\in W$, we define $G_w:=G_{s_{i_k}}\cdots G_{s_{i_2}}G_{s_{i_1}}$. Then $G_w$ is a right adjoint of $T_w$. By \cite[Theorem 4.1]{AS}, we have $G_w\cong d\circ T_{w^{-1}}\circ d$, where $d:A\lmod\rightarrow A\lmod$ is the (ungraded) duality functor induced from the duality functor ``$\vee$'' on $\O_0$ (see \cite[\S3.2]{Hum}).

By \cite[Appendix]{MO}, each twisting functor $T_w$ allows a $\Z$-graded lift. In this paper, we shall follow the following formulation given in \cite[Appendix]{MO} to define the $\Z$-graded lift of the twisting functor $T_x$.
Henceforth, we shall use the same letter $T_x$ to denote the above-defined $\Z$-graded lift of the twisting functor $T_x$. The functors
$\circledast\circ T_{w^{-1}}\circ\circledast$ gives a $\Z$-graded lift of the functor $G_w$ and it is a right adjoint of $T_w$. By abuse of notation, we shall denote it by $G_w$ again throughout this paper. As a result, we have a $\Z$-graded space isomorphism: \begin{equation}\label{biadj}
\Hom_A\bigl(T_wM, N\bigr)\cong\Hom_A\bigl(M,G_wN\bigr),\quad \forall\, M,N\in A\lmod .
\end{equation}

Let $D^b(\O_0^\Z)$ be the bounded derived category of finite dimensional $\Z$-graded $A$-modules. For each $j\in\Z$, let ``$[j]$'' be the functor of shifting the position in a complex defined as follows: $X[j]^i:=X^{i+j}$, $\forall\,i\in\Z, X^\bullet\in D^b(\O_0^\Z)$. Each projective functor $\theta_w$ can also be regarded as a functor on $D^b(\O_0^\Z)$. Let $\rd: D^b(\O_0^\Z)\rightarrow D^b(\O_0^\Z)$ be the duality functor which is induced from the $\Z$-graded duality functor $\circledast: \O_0^\Z\rightarrow\O_0^\Z$. Recall by \cite{BGS}, $\O_0$ is Ringel self-dual. The derived twisting functor $\mL T_{w_0}$ gives the Ringel duality auto-equivalence of $D^b(\O_0^\Z)$ such that \begin{equation}\label{RingelDuality}
P(w)\mapsto T(w_0w),\quad T(w)\mapsto I(w_0w),\quad \Delta(w)\mapsto \nabla(w_0w),\quad\forall\, w\in W.
\end{equation}

%$\underline{\mathcal{H}}_x$
Let ``$*$'' be the unique $\Z[v,v^{-1}]$-linear anti-involution of $\HH(W)$ which is uniquely determined by $H_w^*:=H_{w^{-1}}$ for any $w\in W$.

There is a unique $\Z$-linear involution ``$-$'' (called bar involution) on $\HH(W)$ which maps $v^k$ to $v^{-k}$ for all $k\in\Z$ and $H_w$ to $H_{w^{-1}}^{-1}$ for all $w\in W$. By a well-known result of Kazhdan and Lusztig \cite{KL}, $\HH(W)$ has a unique $\Z[v,v^{-1}]$-basis $\{\uH_w|w\in W\}$, and a unique $\Z[v,v^{-1}]$-basis $\{\ucH_w|w\in W\}$ such that \begin{enumerate}
\item[1)] for each $w\in W$, $\overline{\uH_w}=\uH_w$, $\overline{\ucH_w}=\ucH_w$, and
\item[2)] we have \begin{equation}\label{uch}
\uH_w=H_w+\sum_{w>y\in W}v^{\ell(w)-\ell(y)}P_{y,w}(v^{-2})H_y ,\quad\,
\ucH_w=H_w+\sum_{w>y\in W}(-v)^{\ell(y)-\ell(w)}P_{y,w}(v^{2})H_y ,
\end{equation}
where $P_{y,w}(q)$ is a polynomial in $q$ of degree $\leq (\ell(w)-\ell(y)-1)/2$, and $P_{w,w}(q):=1$.
\end{enumerate}
In particular, \begin{equation}\label{uh11}
\uH_w\in H_w+\sum_{w>y\in W}v\Z[v]H_y,\quad \ucH_w\in H_w+\sum_{w>y\in W}v^{-1}\Z[v^{-1}]H_y. \end{equation}
The polynomial $P_{y,w}(v^2)$ can be identified with $P_{y,w}(u^{-1})$ in the notation of \cite[Chapter 5]{L0}, the basis elements $\uH_w, \ucH_w$ can be identified with $C'_w, C_w$ in the notation of \cite[Chapter 5]{L0} with $u$ there replaced with $v^{-2}$.
We call $\{\uH_w|w\in W\}$ the Kazhdan-Lusztig basis of $\HH(W)$, and $\{\ucH_w|w\in W\}$ the twisted Kazhdan-Lusztig basis of $\HH(W)$. In particular, $\uH_s=H_s+v$, $\ucH_s=H_s-v^{-1}$ for each $s\in S$.

Let $x,y\in W$ with $x\leq y$. By the last paragraph we see that $\deg P_{x,y}(q)\leq (\ell(y)-\ell(x)-1)/2$. Let $\mu(x,y)$ be the coefficient of $q^{(\ell(y)-\ell(x)-1)/2}$ in $ P_{x,y}(q)$. We call $\mu(x,y)$ the ``leading coefficient'' of $P_{x,y}(q)$. If $y\leq x$, then we define $\mu(x,y):=\mu(y,x)$. By \cite[(2.3.b),(2.3.c)]{KL} and \cite[Theorem 6.6]{L1}, we have \begin{equation}\label{2Hs}
\uH_w \uH_s =\begin{cases} \uH_{ws}+\sum_{\substack{y\in W\\ ys<y<w}}\mu(y,w)\uH_{y}, &\text{if $ws>w$;}\\
(v+v^{-1})\uH_w, &\text{if $ws<w$.}\end{cases},\,\,
\ucH_w \ucH_s =\begin{cases} \ucH_{ws}+\sum_{\substack{y\in W\\ ys<y<w}}\mu(y,w)\ucH_{y}, &\text{if $ws>w$;}\\
0, &\text{if $ws<w$.}\\
\end{cases}
\end{equation}

\begin{lem}\text{(\cite{KL}, \cite{Bre}, \cite{Wa})}\label{Pmu} Let $x,y\in W$ with $x\leq y$. The we have $$
P_{x,y}(q)=P_{x^{-1},y^{-1}}(q)=P_{yw_0,xw_0}(q)=P_{w_0y,w_0x}(q),\quad \mu(x,y)=\mu(x^{-1},y^{-1})=\mu(yw_0,xw_0)=\mu(w_0y,w_0x),
$$
\end{lem}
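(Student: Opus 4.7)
My plan is to derive each of the three equalities for $P_{x,y}(q)$ by applying a suitable (anti)involution of $\HH(W)$ to the Kazhdan-Lusztig basis element $\uH_y$ and invoking the uniqueness characterisation in (\ref{uch}). Since all three symmetries preserve the length difference $\ell(y)-\ell(x)$, the corresponding identities for $\mu(x,y)$ will follow immediately by reading off the leading coefficient of $q^{(\ell(y)-\ell(x)-1)/2}$.

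For $P_{x,y}(q)=P_{x^{-1},y^{-1}}(q)$, I would apply the $\Z[v,v^{-1}]$-linear anti-involution $*$ to $\uH_y$. A direct check on the generators $H_s$ shows that $*$ commutes with the bar involution, hence $\uH_y^*$ is bar-invariant. Since the Bruhat order is preserved under $w\mapsto w^{-1}$, the expansion in (\ref{uh11}) gives $\uH_y^*\in H_{y^{-1}}+\sum_{z<y^{-1}}v\Z[v]H_z$. The uniqueness of the Kazhdan-Lusztig basis then forces $\uH_y^*=\uH_{y^{-1}}$, and comparing the coefficient of $H_{x^{-1}}$ on both sides yields the equality.

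For the two equalities involving $w_0$, I would introduce the $\Z[v,v^{-1}]$-algebra involution $\sigma$ of $\HH(W)$ determined on generators by $\sigma(H_s):=-H_s^{-1}$. One verifies directly that $\sigma$ preserves the quadratic and braid relations and commutes with the bar involution; moreover a short computation shows $\sigma(\uH_s)=-\ucH_s$, so an inductive argument using the multiplication rules in (\ref{2Hs}) yields $\sigma(\uH_w)=(-1)^{\ell(w)}\ucH_w$ for all $w\in W$. Using that $\ell(w_0w)=\ell(w_0)-\ell(w)$ for every $w\in W$, I would then multiply $\sigma(\uH_y)$ by $H_{w_0}^{-1}$ on the right (respectively on the left): the product is again bar-invariant, and a careful expansion shows it has the triangular form $\pm H_{yw_0}+\sum_{z<yw_0}v\Z[v]H_z$, so by uniqueness it equals $\pm\uH_{yw_0}$. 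Comparing coefficients then gives $P_{x,y}(q)=P_{yw_0,xw_0}(q)$, and the third equality follows by composing this with the first, using $(w_0y)^{-1}=y^{-1}w_0$.

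The main obstacle will be the sign and degree-shift bookkeeping in the second step: one must simultaneously track how $\sigma$ interchanges the standard and twisted Kazhdan-Lusztig bases, how multiplication by $H_{w_0}^{\pm 1}$ reindexes group elements by $w\mapsto ww_0$ modulo strictly lower Bruhat terms, and verify that the resulting element lands in the triangular form required to invoke uniqueness. Once the $P$-polynomial symmetries are in place, the corresponding statements for $\mu$ are immediate from extracting leading coefficients.
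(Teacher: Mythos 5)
The paper does not prove this lemma; it is cited verbatim from \cite{KL}, \cite{Bre}, \cite{Wa}. So the relevant question is whether your argument would actually establish the stated identities, and it would not, except for the first one.

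Your proof of $P_{x,y}=P_{x^{-1},y^{-1}}$ via the anti-involution $*$ is correct and standard: $*$ commutes with the bar involution, $*$ respects the Bruhat triangularity since $w\mapsto w^{-1}$ is a Bruhat-order automorphism, and uniqueness of the Kazhdan--Lusztig basis gives $\uH_y^*=\uH_{y^{-1}}$. Likewise the identity $\sigma(\uH_w)=(-1)^{\ell(w)}\ucH_w$ (with $\sigma(H_s)=-H_s^{-1}$, hence $\sigma(H_w)=(-1)^{\ell(w)}\overline{H_w}$) is a genuine and well-known fact. But the step where you multiply by $H_{w_0}^{\pm1}$ and invoke uniqueness breaks down: $H_{w_0}^{\pm1}$ is \emph{not} bar-invariant (one has $\overline{H_{w_0}}=H_{w_0}^{-1}\ne H_{w_0}$), so $\sigma(\uH_y)H_{w_0}^{\pm1}$ is not bar-invariant either, and the uniqueness characterisation of the KL basis simply does not apply. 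A rank-one check confirms the failure: for $W=\{e,s\}$, $w_0=s$, one computes $\sigma(\uH_s)H_{w_0}^{-1}=-\ucH_sH_s^{-1}=v^{-1}H_s-v^{-2}$, which is not $\pm\uH_{sw_0}=\pm\uH_e=\pm1$ (and $\sigma(\uH_s)H_{w_0}=vH_s-1$ fails as well). There is also a variable mismatch in the bookkeeping: $\sigma(\uH_y)=(-1)^{\ell(y)}\ucH_y$ is expanded using $P_{z,y}(v^2)$, whereas the target $\uH_{yw_0}$ is expanded using $P_{z,yw_0}(v^{-2})$, so comparing coefficients would not give the claimed identity even if the triangularity worked out.

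The deeper issue is that the equality $P_{x,y}=P_{w_0y,w_0x}$ (equivalently $P_{x,y}=P_{yw_0,xw_0}$, the two being related by the $w_0$-conjugation symmetry) is \emph{not} a formal consequence of the Hecke algebra relations. What the Hecke algebra gives you for free is only the Kazhdan--Lusztig \emph{inversion formula} (orthogonality) $\sum_{x\le z\le y}(-1)^{\ell(z)-\ell(x)}P_{x,z}(q)P_{w_0y,w_0z}(q)=\delta_{x,y}$, and also the identities \eqref{4KLbasis1} relating $\uH$, $\ucH$ to the dual bases $\huH$, $\hucH$ after multiplication by $H_{w_0}$; none of these is a direct equality of $P$-polynomials. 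The direct identity $P_{x,y}=P_{w_0y,w_0x}$ is the nontrivial content that the paper attributes to \cite{Bre} (Brenti's combinatorial formula for $S_n$) and, in general, is tied to Koszul self-duality of $\O_0$ rather than to any algebra involution of $\HH(W)$. So for the $w_0$ equalities a purely Hecke-algebraic deduction of the kind you propose cannot work; you should instead cite the result (as the paper does) or reduce to the orthogonality/Koszul-duality statement.
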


Following \cite[Chapter 5]{L0}, we set $Q_{w,y}:=P_{w_0y,w_0w}$, $\forall\,w\leq y$. For any $x\in W$, we set \begin{equation}\label{uch2}
\huH_w=H_w+\sum_{w<y\in W}(-v)^{\ell(y)-\ell(w)}Q_{w,y}(v^{-2})H_y ,\quad\,
\hucH_w=H_w+\sum_{w<y\in W}v^{\ell(w)-\ell(y)}Q_{w,y}(v^{2})H_y ,
\end{equation}
In particular, \begin{equation}\label{uch22}
\huH_w\in H_w+\sum_{w<y\in W}v\Z[v]H_y,\quad \hucH_w\in H_w+\sum_{w<y\in W}v^{-1}\Z[v^{-1}]H_y. \end{equation}
The elements $\huH_w, \hucH_w$ can be identified with $D'_w, D_w$ in the notation of \cite[Chapter 5]{L0} with $u$ there replaced with $v^{-2}$.

Let $\tau: \HH(W)\rightarrow\Z[v,v^{-1}]$ be the linear function on $\HH(W)$ defined by $\tau(\sum_{w\in W}r_wH_w)=r_e$, where $r_w\in\Z[v,v^{-1}]$ for each $w\in W$. It is well-known that $\tau$ is a non-degenerate symmetrizing form on $\HH(W)$. By \cite[(5.1.10)]{L0}, we have \begin{equation}
\tau(\uH_x\hat{\uH}_{y^{-1}})=\delta_{xy}=\tau(\ucH_x\hat{\ucH}_{y^{-1}}),\quad \forall\,x,y\in W .
\end{equation}
For this reason, we call $\{\hat{\uH}_w|w\in W\}$ the dual Kazhdan-Lusztig basis of $\HH(W)$, and $\{\hat{\ucH}_w|w\in W\}$ the dual twisted Kazhdan-Lusztig basis of $\HH(W)$. Applying Lemma \ref{Pmu} and \cite[(5.1.8)]{L0}, we can get that \begin{equation}\label{4KLbasis1}
\huH_w=\ucH_{ww_0}H_{w_0}=H_{w_0}\ucH_{w_0w},\quad \hucH_w=\uH_{ww_0}H_{w_0}=H_{w_0}\uH_{w_0w}.
\end{equation}

We use $\Ext^1(-,-)$ (resp., $\ext_A^1(-,-)$) to denote the extension functor in the category $\O_0$ (resp., the category $\O_0^\Z$).

\begin{lem}\label{parity1} Let $x,y\in W$. Then $\dim\Ext^1_\O(L(x),L(y))=\mu(x,y)=\dim\ext_A^1(\ml(x),\ml(y)\<1\>)$. In particular, $\ext_A^1(\ml(x),\ml(y)\<k\>)\neq 0$ only if $k=1$, $\ell(x)\equiv\ell(y)+1\pmod{2\Z}$ and either $x<y$ or $y<x$.
\end{lem}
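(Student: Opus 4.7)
The plan is to assemble three classical inputs. The deepest one is the Kazhdan--Lusztig identity
\[
\dim_{\C}\Ext^1_{\O}\bigl(L(x),L(y)\bigr) = \mu(x,y),
\]
valid whenever $x<y$ or $y<x$ (and trivially zero in the Bruhat-incomparable case). For $x<y$ this is the classical theorem established in \cite{KL}, proved by reading off the first radical layer of $P(x)$ from the graded Kazhdan--Lusztig composition multiplicities of the Verma modules appearing in a standard filtration of $P(x)$; the case $y<x$ then follows from the symmetry $\mu(x,y)=\mu(y,x)$. I would cite \cite{KL,Bre,Wa} directly for this input rather than reprove it.

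To pass from the ungraded $\Ext^1_{\O}$ to the graded $\ext^1_A$ and simultaneously pin down the grading shift, I would invoke the Koszulity of $A$ proved in \cite{BGS}. A Koszul algebra admits, for every graded simple $L$, a linear graded projective resolution whose $n$th term is generated in internal degree $n$; equivalently, for any graded simples $L,L'$ one has $\ext^n_A(L,L'\langle k\rangle)=0$ unless $k=n$. Taking $n=1$ gives the ``only if $k=1$'' clause, and forgetting the grading yields
\[
\Ext^1_{\O}\bigl(L(x),L(y)\bigr) \;=\; \bigoplus_{k\in\Z}\ext^1_A\bigl(L(x),L(y)\langle k\rangle\bigr) \;=\; \ext^1_A\bigl(L(x),L(y)\langle 1\rangle\bigr).
\]
Combining this with the first step delivers the second equality $\mu(x,y)=\dim\ext^1_A(L(x),L(y)\langle 1\rangle)$.

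For the remaining ``only if'' conditions, I would simply unpack the definition of $\mu$: it is the coefficient of $q^{(\ell(y)-\ell(x)-1)/2}$ in $P_{x,y}(q)$, extended by the convention $\mu(x,y)=\mu(y,x)$. Hence $\mu(x,y)\neq 0$ forces $(\ell(y)-\ell(x)-1)/2$ to be a non-negative integer together with $x<y$ (or symmetrically $y<x$), which yields both $\ell(x)\equiv\ell(y)+1\pmod{2}$ and the Bruhat-comparability of $x$ and $y$. Once the two equalities are established, all three ``only if'' clauses fall out of the definition of $\mu$. The main obstacle is precisely the classical first step, and the strategy for the whole lemma is to import that result from \cite{KL,Bre,Wa} and let Koszulity plus the definitional bound on $\deg P_{x,y}$ do the rest.
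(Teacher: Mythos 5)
Your proposal is correct and follows essentially the same route as the paper: the ungraded identity $\dim\Ext^1_\O(L(x),L(y))=\mu(x,y)$ (the paper cites \cite[Theorem 8.15]{Hum} where you cite \cite{KL,Bre,Wa}), Koszulity of $A$ to force $k=1$ via \cite[Proposition 2.1.3]{BGS}, and the definitional degree bound on $P_{x,y}$ for the parity and Bruhat constraints. The only difference is the choice of reference for the classical input, which is immaterial.
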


\begin{proof} This follows from  \cite[Theorem 8.15]{Hum} and \cite[Proposition 2.1.3]{BGS} and \cite[Fact 3.1]{Wa}.
\end{proof}

For any $w\in W$, we define $$
\mcL(w):=\{s\in S|sw<w\},\quad \mcR(w):=\{s\in S|ws<w\}.
$$

\begin{lem}\label{3prop} Let $x,y\in W$. \begin{enumerate}
\item[(i)] If $\ell(y)-\ell(x)=1$, then $P_{x,y}(q)=1=\mu(x,y)$;
\item[(ii)] If $x\leq y$ and either $\mcL(y)\not\subseteq\mcL(x)$ or $\mcR(y)\not\subseteq\mcR(x)$, then $\mu(x,y)\neq 0$ only if $\ell(y)-\ell(x)=1$.
\end{enumerate}
\end{lem}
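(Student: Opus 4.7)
My plan is to prove (i) by a degree-count plus constant-term argument, and (ii) by reducing to the left-descent case via the inversion symmetry and then exploiting the classical identity $P_{x,y}=P_{sx,y}$.

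For (i), the degree bound built into \eqref{uch} gives $\deg P_{x,y}(q)\leq(\ell(y)-\ell(x)-1)/2=0$, so $P_{x,y}(q)$ is a nonnegative integer constant. It therefore suffices to verify $P_{x,y}(0)=1$ whenever $x\leq y$, which is a standard property of Kazhdan--Lusztig polynomials (provable by induction on $\ell(y)-\ell(x)$ from the recursion $\uH_y=\uH_{ys}\uH_s-\sum_{z:zs<z<ys}\mu(z,ys)\uH_z$ obtained from \eqref{2Hs}, since the correction terms contribute only strictly positive powers of $q$ to the coefficient of the relevant $H_x$). This yields $P_{x,y}(q)=1$ and hence $\mu(x,y)=1$.

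For (ii), first assume $s\in\mcL(y)\setminus\mcL(x)$, so $sy<y$ and $sx>x$. The crucial input is the identity $P_{x,y}(q)=P_{sx,y}(q)$. I would derive it by expanding both sides of $\uH_s\uH_y=(v+v^{-1})\uH_y$ (the left analogue of the second case of \eqref{2Hs}, valid since $sy<y$) in the standard basis, using $\uH_s=H_s+vH_e$ and the Hecke rules $H_sH_z=H_{sz}$ if $sz>z$, and $H_sH_z=H_{sz}+(v^{-1}-v)H_z$ if $sz<z$; comparing the coefficient of $H_x$ on both sides and exploiting $sx>x$ gives $v^{\ell(y)-\ell(x)-1}\bigl(P_{sx,y}(v^{-2})-P_{x,y}(v^{-2})\bigr)=0$, as required. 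Now $x\leq y$ forces $P_{x,y}\neq 0$, hence $sx\leq y$; I then split cases: if $sx=y$ then $\ell(y)-\ell(x)=1$, as required; otherwise $sx<y$ gives $\deg P_{sx,y}\leq(\ell(y)-\ell(x)-2)/2<(\ell(y)-\ell(x)-1)/2$, so the coefficient of $q^{(\ell(y)-\ell(x)-1)/2}$ in $P_{sx,y}=P_{x,y}$ vanishes, forcing $\mu(x,y)=0$.

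The remaining case $s\in\mcR(y)\setminus\mcR(x)$ reduces to the one already handled via the anti-involution $w\mapsto w^{-1}$: indeed $\mcR(w)=\mcL(w^{-1})$, and by Lemma \ref{Pmu} both $P_{x,y}$ and $\mu(x,y)$ are invariant under $(x,y)\mapsto(x^{-1},y^{-1})$ while $\ell$ is preserved, so the previous paragraph applied to $(x^{-1},y^{-1})$ yields the conclusion. The main obstacle in the plan is the Hecke-algebra computation producing $P_{x,y}=P_{sx,y}$; once this identity is secured, the remainder is elementary degree bookkeeping.
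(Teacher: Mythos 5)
Your proof is correct, but it takes a genuinely different and more self-contained route than the paper's, which simply cites \cite[Lemma 2.6(iii)]{KL} for part (i) and \cite[Fact 3.2]{Wa} for part (ii). For (i), combining the degree bound implicit in \eqref{uch} with the classical fact $P_{x,y}(0)=1$ for $x\leq y$ is exactly the argument underlying the cited lemma. For (ii), the identity $P_{sx,y}(q)=P_{x,y}(q)$ when $sy<y$ and $sx>x$ is a standard Kazhdan--Lusztig property (it is essentially \cite[(2.3.g)]{KL}, which the paper cites elsewhere for the right-descent version); your derivation of it by comparing the coefficient of $H_x$ on both sides of $\uH_s\uH_y=(v+v^{-1})\uH_y$ is sound, and the subsequent bookkeeping---$P_{x,y}=P_{sx,y}\neq 0$ forces $sx\leq y$, and if $sx<y$ strictly then $\deg P_{sx,y}\leq(\ell(y)-\ell(x)-2)/2$ falls strictly below the threshold exponent $(\ell(y)-\ell(x)-1)/2$, so $\mu(x,y)=0$---correctly pins down $sx=y$ and hence $\ell(y)-\ell(x)=1$. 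The reduction of the right-descent case via $w\mapsto w^{-1}$, $\mcR(w)=\mcL(w^{-1})$, and the invariance from Lemma \ref{Pmu} is also valid. The trade-off is clear: your argument is elementary and internal to the Hecke-algebra setup already developed in Section 2, at the cost of re-deriving a descent identity; the paper's two citations buy brevity but leave the reader to hunt down Warrington's Fact 3.2 for what is, in the left-descent case, a one-line degree count once the descent identity is in hand.
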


\begin{proof} (i) follows from \cite[Lemma 2.6(iii)]{KL}. (ii) follows from \cite[Fact 3.2]{Wa}.
\end{proof}

For any category $\mathcal{C}$, we use $[\mathcal{C}]$ to denote the Grothendieck group of $\mathcal{C}$. For any $k\in\Z$ and $M\in\mathcal{O}_0^\Z$, we define $$
v^k[M]:=[M\<k\>] .
$$
Then $[\O_0^\Z]$ naturally becomes a $\Z[v,v^{-1}]$-module.

\begin{lem}[\text{\cite[Propositions 7.10, 7.11]{Ma2}}]\label{keylem0} There is a unique $\Z[v,v^{-1}]$-module isomorphism: $\varphi: \HH(W)\cong\bigl[\O_0^\Z\bigr]$ such that $$
\varphi(H_w)=[\Delta(w)],\quad \varphi(\uH_w)=[P(w)],\quad \varphi(\huH_w)=[L(w)],   \,\forall\,w\in W,
$$
and the following diagram commutes: $$
\begin{CD}
    \HH(W) @>\cdot \uH_w>> \HH(W)\\
    @V \varphi V V @VV \varphi V\\
    [\O_0^\Z] @>>[\theta_w]\cdot> [\O_0^\Z]
\end{CD} $$\end{lem}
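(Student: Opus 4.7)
The plan is to construct $\varphi$ explicitly via the standard basis and then verify the remaining two identifications together with the commutative diagram. First I would set $\varphi(H_w) := [\Delta(w)]$ for every $w \in W$. Since $\{H_w\}_{w \in W}$ is a free $\Z[v,v^{-1}]$-basis of $\HH(W)$ and, under the rule $v^k[M] = [M\<k\>]$, the classes $\{[\Delta(w)]\}_{w \in W}$ form a free $\Z[v,v^{-1}]$-basis of $[\O_0^\Z]$---because $\O_0^\Z$ is a graded highest weight category with the Verma modules as standard objects---this extends uniquely to a $\Z[v,v^{-1}]$-module isomorphism. Uniqueness of $\varphi$ subject to any one of the three listed conditions is then automatic, since each of $\{H_w\}$, $\{\uH_w\}$, $\{\huH_w\}$ is a basis.

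Next I would verify $\varphi(\uH_w) = [P(w)]$ and $\varphi(\huH_w) = [L(w)]$ by unpacking the Hecke expansions \eqref{uch} and \eqref{uch2} and comparing them with graded Kazhdan--Lusztig character formulas. On the projective side, Soergel's graded refinement of the Kazhdan--Lusztig multiplicity theorem (cf.\ \cite{So1}, \cite{BGS}) gives graded Verma flag multiplicities of $P(w)$ governed by $v^{\ell(w)-\ell(y)}P_{y,w}(v^{-2})$, yielding $[P(w)] = \sum_{y \leq w} v^{\ell(w)-\ell(y)} P_{y,w}(v^{-2}) [\Delta(y)] = \varphi(\uH_w)$. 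On the simple side, BGG reciprocity combined with the symmetry $Q_{w,y} = P_{w_0 y, w_0 w}$ from Lemma \ref{Pmu} gives the graded composition series formula $[L(w)] = \sum_{y \geq w} (-v)^{\ell(y)-\ell(w)} Q_{w,y}(v^{-2})[\Delta(y)] = \varphi(\huH_w)$.

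For the commutative diagram, I would reduce by multiplicativity to the case of a simple reflection $s \in S$. Assuming commutativity holds for each $\uH_s$, one expresses an arbitrary $\uH_w$ as a $\Z[v,v^{-1}]$-combination of products $\uH_{s_{i_1}} \cdots \uH_{s_{i_k}}$ via iterated application of \eqref{2Hs}, and uses that the graded lift of $\theta_w$ can be realised as a composition of graded lifts of the $\theta_{s_i}$'s to extend the claim. For the base case, a direct Hecke calculation gives $H_y \uH_s = H_{ys} + v H_y$ when $ys > y$ and $H_y \uH_s = H_{ys} + v^{-1} H_y$ when $ys < y$. On the categorical side, the graded short exact sequences exhibiting the $\Z$-graded Verma flag of $\theta_s \Delta(y)$, namely $0 \to \Delta(y)\<1\> \to \theta_s \Delta(y) \to \Delta(ys) \to 0$ when $ys > y$ and $0 \to \Delta(ys) \to \theta_s \Delta(y) \to \Delta(y)\<-1\> \to 0$ when $ys < y$, immediately match these Hecke expressions and give $[\theta_s \Delta(y)] = \varphi(H_y \uH_s)$.

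The main obstacle is controlling the graded shifts throughout: the ungraded content of the lemma goes back to \cite{BG} and is essentially classical, so the real work lies in making sure the Koszul shift convention $\<k\>$ fixed in the introduction is consistent with the normalisation $\uH_s = H_s + v$ of \eqref{uh11}, and more generally with the particular graded lifts of $\theta_w$ chosen in \cite{Stro2}. Once the base case $\theta_s \Delta(e) \cong P(s)$ is pinned down with the correct internal shift (so that $[P(s)] = [\Delta(s)] + v[\Delta(e)]$), the entire statement propagates via the braid/associativity compatibilities on both sides of the diagram.
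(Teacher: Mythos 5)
The paper does not give a proof of this lemma: it is stated as quoted from Mazorchuk's textbook \cite[Propositions 7.10, 7.11]{Ma2}, so there is no in-paper argument to compare against. Your proposal supplies a proof along the standard lines, and the overall route is correct: constructing $\varphi$ on the $\{H_w\}$-basis, identifying $\varphi(\uH_w)=[P(w)]$ and $\varphi(\huH_w)=[L(w)]$ via the Koszul-graded Kazhdan--Lusztig multiplicity formulas, and verifying the diagram on generators $\uH_s$ using the graded short exact sequences for $\theta_s\Delta(y)$. The formulas you quote for those exact sequences and for $H_y\uH_s$ are consistent with what the paper later uses (e.g.\ in the proofs of Lemma \ref{keylem02} and Lemma \ref{Csaction0}).

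One passage is imprecise and should be repaired. You write that ``the graded lift of $\theta_w$ can be realised as a composition of graded lifts of the $\theta_{s_i}$'s.'' That is false: for a reduced word $w=s_{i_1}\cdots s_{i_k}$, the functor $\theta_w$ is an indecomposable \emph{direct summand} of $\theta_{s_{i_1}}\cdots\theta_{s_{i_k}}$, not that composition itself, which in general contains lower terms $\theta_y\langle k\rangle$ with $y<w$. The correct way to close the induction is: having the diagram for all simple reflections, one knows $\varphi(H_x\,\uH_{s_{i_1}}\cdots\uH_{s_{i_k}})=[\theta_{s_{i_1}}\cdots\theta_{s_{i_k}}\Delta(x)]$; evaluating at $x=e$, using $\varphi(\uH_y)=[P(y)]$ and Krull--Schmidt, one deduces the decomposition of $\theta_{s_{i_1}}\cdots\theta_{s_{i_k}}\Delta(0)$, and then invokes the graded version of Bernstein--Gelfand's classification (a projective functor is determined by its value on the dominant Verma, see \cite{BG}, \cite{Stro2}) to lift this to a decomposition of the functor itself into $\theta_w$ plus shifted lower $\theta_y$'s with the same multiplicities; subtracting the inductively known lower terms gives the diagram for $\uH_w$. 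With that emendation the argument is complete, and it is the same argument one finds in \cite{Ma2}.
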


The following result seems to be well-known to experts but not explicitly stated anywhere in the literature. We add it for completeness.

\begin{lem} For any $w\in W$, we have $$
\varphi(\hucH_w)=[T(w)] .
$$
\end{lem}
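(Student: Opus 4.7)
My strategy is to reduce the identity $\varphi(\hucH_w) = [T(w)]$ to Ringel duality (equation \eqref{RingelDuality}) via the factorization $\hucH_w = H_{w_0}\uH_{w_0 w}$ supplied by equation \eqref{4KLbasis1}. Since $\varphi(\uH_{w_0 w}) = [P(w_0 w)]$ by Lemma \ref{keylem0} and $\mL T_{w_0}(P(w_0 w)) \cong T(w)$ in $D^b(\O_0^\Z)$ by \eqref{RingelDuality}, what I really need is that left multiplication by $H_{w_0}$ on $\HH(W)$ corresponds, under $\varphi$, to the endomorphism of $[\O_0^\Z]$ induced by $\mL T_{w_0}$ on Grothendieck groups.

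To establish this correspondence I plan to compare two $\Z[v,v^{-1}]$-linear maps $f, g : \HH(W) \to [\O_0^\Z]$ defined by $f(x) := \varphi(H_{w_0}\, x)$ and $g(x) := [\mL T_{w_0}]\bigl(\varphi(x)\bigr)$. Both $f$ and $g$ intertwine right multiplication by $\uH_w$ on $\HH(W)$ with the action of $[\theta_w]$ on $[\O_0^\Z]$: for $f$ this is exactly the commutative diagram in Lemma \ref{keylem0}, while for $g$ it combines that same diagram with Corollary \ref{mltwisted}, which asserts that $\theta_w$ commutes with $\mL T_{w_0}$ on the derived category. Because $\{\uH_w\}_{w \in W}$ is a $\Z[v,v^{-1}]$-basis of $\HH(W)$ and $\uH_w = H_e\cdot \uH_w$, the equality $f = g$ then reduces to the single-point check $f(H_e) = g(H_e)$. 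Here $f(H_e) = \varphi(H_{w_0}) = [\Delta(w_0)]$, whereas $g(H_e) = [\mL T_{w_0}]\bigl([\Delta(e)]\bigr) = [\nabla(w_0)]$ by \eqref{RingelDuality}; the two coincide because $w_0\cdot 0$ is antidominant, forcing $\Delta(w_0) = L(w_0) = \nabla(w_0)$ as graded modules.

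Assembling the pieces gives
\[
\varphi(\hucH_w) = f(\uH_{w_0 w}) = g(\uH_{w_0 w}) = [\mL T_{w_0}(P(w_0 w))] = [T(w)],
\]
as required. The only step I expect to be mildly delicate is the descent of $\mL T_{w_0}$ to a well-defined $\Z[v,v^{-1}]$-linear operator on $[\O_0^\Z]$ realising $[\Delta(y)] \mapsto [\nabla(w_0 y)]$. This is immediate from $\mL T_{w_0}$ being a triangulated auto-equivalence of $D^b(\O_0^\Z)$ together with the Verma assignment recorded in \eqref{RingelDuality}, but I will be careful with the graded conventions so that no grading shifts are lost in translation.
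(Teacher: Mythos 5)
Your proof is correct, and it takes a genuinely different route from the paper's. The paper argues directly at the level of filtration multiplicities: it uses graded Ringel self-duality to compute $\bigl(T(w):\nabla(y)\bigr)_v=\bigl(P(w_0w):\Delta(w_0y)\bigr)_v=v^{\ell(y)-\ell(w)}P_{w_0y,w_0w}(v^{-2})$, then applies $T(w)^{\circledast}\cong T(w)$ and $\nabla(y)^{\circledast}\cong\Delta(y)$ to convert to $\Delta$-flag multiplicities, and finally matches the resulting expression $\bigl(T(w):\Delta(y)\bigr)_v=v^{\ell(w)-\ell(y)}Q_{w,y}(v^2)$ against the definition \eqref{uch2} of $\hucH_w$. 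Your argument instead promotes the computation to a structural one: you realize left multiplication by $H_{w_0}$ under $\varphi$ as the operator induced by the triangulated auto-equivalence $\mL T_{w_0}$ on the Grothendieck group, then feed in the factorization $\hucH_w=H_{w_0}\uH_{w_0w}$ from \eqref{4KLbasis1} and the Ringel duality assignment $P(w_0w)\mapsto T(w)$ from \eqref{RingelDuality}. The uniqueness step --- both $f(x)=\varphi(H_{w_0}x)$ and $g(x)=[\mL T_{w_0}]\varphi(x)$ intertwine right multiplication by $\uH_w$ with $[\theta_w]$ (via Lemma \ref{keylem0} and Corollary \ref{mltwisted}) and agree at $H_e$ because $\Delta(w_0)=L(w_0)=\nabla(w_0)$ as graded modules --- is sound; indeed it is the standard way to pin down a module map on a categorified regular representation. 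The paper's proof is shorter and self-contained, while yours cleanly exhibits the interaction between Kazhdan--Lusztig combinatorics and Ringel duality and effectively anticipates the mechanism behind Theorem \ref{mainthm1}.
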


\begin{proof} By graded Ringel self-duality (\ref{RingelDuality}), we have $$
\bigl(T(w):\nabla(y)\bigr)_v=\bigl(P(w_0w):\Delta(w_0y)\bigr)_v=v^{\ell(y)-\ell(w)}P_{w_0y,w_0w}(v^{-2}) .
$$
Since $T(w)^{\circledast}\cong T(w)$ and $\nabla(y)^{\circledast}\cong\Delta(y)$, it follows that $$
\bigl(T(w):\Delta(y)\bigr)_v=\overline{\bigl(T(w):\nabla(y)\bigr)_v}=v^{-\ell(y)+\ell(w)}P_{w_0y,w_0w}(v^{2})=v^{-\ell(y)+\ell(w)}Q_{w,y}(v^2),
$$
which implies that $\varphi(\hucH_w)=[T(w)]$ by (\ref{uch2}).
\end{proof}

\begin{rem} We note that $\varphi(\nabla(w))$ is in general not equal to $T_{w^{-1}}^{-1}$. In particular, $\varphi$ does not intertwine the duality functor and the bar involution.
\end{rem}

Let $\mathcal{P}$ be the category of graded projective endofunctors of $\O_0^\Z$ and $[\mathcal{P}]$ be its Grothendieck group. For each $w\in W$ and $k\in\Z$, we define $v^k[\theta_w]:=[\theta_w\<k\>]$. By linearity we get a $\Z[v,v^{-1}]$-module structure on $[\mathcal{P}]$.

\begin{lem}\text{(\cite[Theorem 7.11]{Ma2})} With the notations as above, the map which sends $\theta_w$ to $\uH_w$ for each $w\in W$ can be extended uniquely to an anti-isomorphism of $\Z[v,v^{-1}]$-algebras between
$[\mathcal{P}]$ and the Hecke algebra $\HH(W)$.
\end{lem}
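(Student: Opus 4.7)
The strategy is to read off the multiplicative structure of $[\mathcal{P}]$ from its action on $[\O_0^\Z]$ via the commutative diagram in Lemma \ref{keylem0}. First I would invoke the Bernstein--Gelfand classification of indecomposable projective endofunctors together with the existence of graded lifts (\cite{Stro2}) to conclude that every graded projective endofunctor decomposes uniquely as a direct sum of grading shifts of the $\theta_w$ with $w\in W$. Hence $[\mathcal{P}]$ is a free $\Z[v,v^{-1}]$-module on $\{[\theta_w]\mid w\in W\}$, which matches the free $\Z[v,v^{-1}]$-basis $\{\uH_w\mid w\in W\}$ of $\HH(W)$. The assignment $[\theta_w]\mapsto\uH_w$ therefore extends uniquely to a $\Z[v,v^{-1}]$-linear isomorphism $\Psi\colon[\mathcal{P}]\xrightarrow{\sim}\HH(W)$.

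Next I would verify the anti-multiplicativity of $\Psi$. Fix $x,y\in W$ and expand $[\theta_x\circ\theta_y]=\sum_w m_{x,y}^w(v)\,[\theta_w]$ in $[\mathcal{P}]$, with graded structure constants $m_{x,y}^w(v)\in\N[v,v^{-1}]$ coming from the Krull--Schmidt decomposition of the composite graded projective functor. Evaluating both sides on $[\Delta(e)]=\varphi(\uH_e)$ and using the special case $h=\uH_e$ of the commutative square of Lemma \ref{keylem0} to identify $[\theta_w\Delta(e)]=\varphi(\uH_w)=[P(w)]$, the left-hand side becomes $\sum_w m_{x,y}^w(v)[P(w)]=\varphi\bigl(\sum_w m_{x,y}^w(v)\uH_w\bigr)$. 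The right-hand side $[\theta_x(\theta_y\Delta(e))]=[\theta_x P(y)]=[\theta_x\varphi(\uH_y)]$ is rewritten by a second application of the same diagram (now with $h=\uH_y$) as $\varphi(\uH_y\uH_x)$. Injectivity of $\varphi$ then forces
\[
\sum_w m_{x,y}^w(v)\,\uH_w = \uH_y\uH_x,
\]
which is exactly $\Psi([\theta_x\circ\theta_y])=\Psi([\theta_y])\Psi([\theta_x])$; that is, $\Psi$ is an anti-homomorphism. Uniqueness of the extension is automatic because $\Psi$ is prescribed on the generating set $\{[\theta_w]\}$.

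The main obstacle is not the final algebraic identity---which amounts to little more than reading the commutative square of Lemma \ref{keylem0} twice---but the structural input required to set it up: one must know that composition of graded projective functors is again graded projective, decomposes canonically with multiplicities in $\N[v,v^{-1}]$, and descends to a well-defined $\Z[v,v^{-1}]$-bilinear product on $[\mathcal{P}]$. This rests on the graded Krull--Schmidt theorem for $\mathcal{P}$ combined with the Bernstein--Gelfand/Stroppel classification recalled in Section 2. The order-reversal $\uH_y\uH_x$ appearing in the display above is precisely why $\Psi$ comes out as an anti-isomorphism rather than an isomorphism of $\Z[v,v^{-1}]$-algebras.
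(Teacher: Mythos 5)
Your proof is correct and is essentially the argument one would extract from Mazorchuk's Theorem 7.11 (which the paper cites without reproving): reduce the multiplicative structure of $[\mathcal{P}]$ to its action on $[\O_0^\Z]$, evaluate at $[\Delta(e)]$, and use the injectivity of $\varphi$ together with the commutative square of Lemma \ref{keylem0} applied twice to recover the anti-multiplicativity $\Psi([\theta_x\circ\theta_y])=\uH_y\uH_x$. You correctly isolate the structural prerequisites (graded Krull--Schmidt for $\mathcal{P}$, closure of projective functors under composition via $-\otimes V_1\otimes V_2$, and the Bernstein--Gelfand/Stroppel classification giving freeness of $[\mathcal{P}]$ on $\{[\theta_w]\}$), and the order reversal is tracked accurately.
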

In other words, the above lemmas says that the category of the graded projective endofunctors of $\O_0$ gives a categorification of the right regular $\HH(W)$-module. The next lemma gives a second version of isomorphism between $\HH(W)$ and $\bigl[\O_0^\Z\bigr]$ which seems not explicitly stated anywhere in the literature.

\begin{lem}\label{keylem02} There is a unique $\Z[v,v^{-1}]$-module isomorphism: $\psi: \HH(W)\cong\bigl[\O_0^\Z\bigr]$ such that \begin{equation}\label{3maps}
\psi(H_{w})=[\nabla(w_0w)],\quad \psi(\uH_{w})=[T(w_0w)],\quad \psi(\hucH_{w})=[I(w_0w)],   \,\forall\,w\in W,
\end{equation}
Moreover, we have \begin{equation}\label{2maps} \psi(H_{w^{-1}}^{-1})=[\Delta(w_0w)],\quad \psi(\ucH_{w})=[L(w_0w)], \forall\,w\in W. \end{equation}
\end{lem}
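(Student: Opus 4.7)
The plan is to define $\psi$ on the standard basis by $\psi(H_w) := [\nabla(w_0 w)]$ for each $w\in W$, and extend by $\Z[v,v^{-1}]$-linearity. The transition matrix from $\{[\nabla(x)]\}_{x\in W}$ to the simple basis $\{[L(x)]\}_{x\in W}$ of $\bigl[\O_0^\Z\bigr]$ is unitriangular with respect to any refinement of the Bruhat order, so $\{[\nabla(x)]\}$ is a $\Z[v,v^{-1}]$-basis of $\bigl[\O_0^\Z\bigr]$. Hence $\psi$ is the unique $\Z[v,v^{-1}]$-module isomorphism with $\psi(H_w)=[\nabla(w_0w)]$.

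Next I would verify the remaining identities in (\ref{3maps}). For $\psi(\uH_w) = [T(w_0 w)]$, expanding $\uH_w$ via (\ref{uch}) yields $\psi(\uH_w) = \sum_{y\leq w} v^{\ell(w)-\ell(y)} P_{y,w}(v^{-2})[\nabla(w_0 y)]$, which matches $[T(w_0 w)]$ because Ringel self-duality (\ref{RingelDuality}) transports $\varphi(\uH_w) = [P(w)]$ to $(T(w_0 w): \nabla(w_0 y))_v = (P(w):\Delta(y))_v = v^{\ell(w)-\ell(y)} P_{y,w}(v^{-2})$. For $\psi(\hucH_w) = [I(w_0 w)]$, expand $\hucH_w$ via (\ref{uch2}) with $Q_{w,y}(v^2) = P_{w_0 y, w_0 w}(v^2)$ and substitute $z = w_0 y$ (so that $\ell(w)-\ell(y) = \ell(z)-\ell(w_0 w)$) to obtain $\psi(\hucH_w) = \sum_{z\leq w_0 w} v^{\ell(z)-\ell(w_0 w)}P_{z, w_0 w}(v^2)[\nabla(z)]$; this matches the character of $I(w_0 w) = P(w_0 w)^\circledast$ in the dual Verma basis, computed via the identity $(M^\circledast:\nabla(z))_v = \overline{(M:\Delta(z))_v}$ for any $M$ having a graded $\Delta$-flag.

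For (\ref{2maps}), $\psi(\ucH_w) = [L(w_0 w)]$ is verified by the same type of expansion and substitution: expand $\ucH_w$ via (\ref{uch}), then match coefficients with $[L(w_0 w)]$ in the dual Verma basis, which is obtained by applying $\circledast$ to the formula $[L(w_0 w)] = \sum_{y\geq w_0 w}(-v)^{\ell(y)-\ell(w_0 w)} Q_{w_0 w, y}(v^{-2})[\Delta(y)]$ (coming from $\varphi(\huH_{w_0 w}) = [L(w_0 w)]$ and (\ref{uch2})) and then applying Lemma \ref{Pmu} together with the substitution $y\leftrightarrow w_0 y$. The identity $\psi(H_{w^{-1}}^{-1}) = [\Delta(w_0 w)]$ then follows most cleanly from the observation that $\psi$ intertwines the bar involution on $\HH(W)$ with the involution on $\bigl[\O_0^\Z\bigr]$ induced by $\circledast$; indeed, $\{\ucH_w\}$ is a bar-invariant $\Z[v,v^{-1}]$-basis of $\HH(W)$ which $\psi$ maps to the $[\circledast]$-invariant basis $\{[L(w_0 w)]\}$, and both involutions are $\Z$-linear with $v\mapsto v^{-1}$, so for any $x = \sum_w c_w(v)\ucH_w$ one has $\psi(\overline{x}) = \sum_w c_w(v^{-1})[L(w_0 w)] = [\psi(x)^\circledast]$. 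Taking $x = H_w$ and using $H_{w^{-1}}^{-1} = \overline{H_w}$ gives $\psi(H_{w^{-1}}^{-1}) = [\nabla(w_0 w)^\circledast] = [\Delta(w_0 w)]$.

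The main obstacle is the bookkeeping around three interacting involutions (bar on $\HH(W)$, graded duality $\circledast$, and Ringel duality (\ref{RingelDuality})), together with careful use of the symmetries $Q_{w,y} = P_{w_0 y, w_0 w}$ from Lemma \ref{Pmu} and the length identity $\ell(w_0 y) = \ell(w_0) - \ell(y)$; once these preliminaries are in hand all verifications reduce to routine combinatorial checks.
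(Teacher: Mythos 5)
Your proposal is correct. The underlying facts invoked (unitriangularity of the transition matrix, Ringel self-duality for tilting characters, the symmetries of Kazhdan--Lusztig polynomials from Lemma \ref{Pmu}, and the bar-invariance of $\uH_w$ and $\ucH_w$ paired with the $\circledast$-self-duality of $T(w)$ and $L(w)$) are the same ones the paper uses, but you organize them differently in two respects. First, the paper obtains all three identities in (\ref{3maps}) at once by composing $\varphi$ from Lemma \ref{keylem0} with the isomorphism of Grothendieck groups induced by the graded Ringel self-duality functor (\ref{RingelDuality}), having earlier established $\varphi(\hucH_w)=[T(w)]$ in a separate lemma; you instead define $\psi$ directly on the standard basis and verify each of the three identities by matching characters in the dual Verma basis, which is more computation but avoids invoking that auxiliary lemma. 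Second, for $\psi(H_{w^{-1}}^{-1})=[\Delta(w_0 w)]$ the paper expands both $\uH_w$ in terms of $\{H_{y^{-1}}^{-1}\}$ and $[T(w_0w)]=[T(w_0w)^\circledast]$ in terms of $\{[\Delta(w_0y)]\}$ and then does an explicit induction on the Bruhat order; your observation that $\psi$ intertwines the bar involution with $[\circledast]$ --- once $\psi(\ucH_w)=[L(w_0w)]$ is in hand, since $\psi$ carries a bar-fixed $\Z[v,v^{-1}]$-basis to a $[\circledast]$-fixed one and both involutions are $\Z$-linear with $v\mapsto v^{-1}$ --- packages that induction into a one-line deduction $\psi(\overline{H_w})=[\nabla(w_0w)^\circledast]=[\Delta(w_0w)]$, which is cleaner. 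The only caution is that this ordering forces you to establish $\psi(\ucH_w)=[L(w_0w)]$ before, not after, the Verma identity, reversing the paper's order of presentation; your outline does respect this.
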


\begin{proof} The graded Ringel self-duality functor (\ref{RingelDuality}) induces an isomorphism $\bigl[\O_0^\Z\bigr]\cong \bigl[\O_0^\Z\bigr]$ such that $$
[P(w)]\mapsto [T(w_0w)],\,\, [\Delta(w)]\mapsto [\nabla(w_0w)],\,\,[T(w)]\mapsto [I(w_0w)] .
$$
This gives to a unique $\Z[v,v^{-1}]$-module isomorphism: $\psi: \HH(W)\cong\bigl[\O_0^\Z\bigr]$ such that (\ref{3maps}) holds. In particular, $$
 [T(w_0w)]  = [\nabla(w_0w)]+\sum_{y<w}v^{\ell(w)-\ell(y)}P_{y,w}(v^{-2})[\nabla(w_0y)].
$$

Now, $\overline{\uH_w}=\uH_w$, $T(w_0w)^{\circledast}\cong T(w_0w)$, we have $$\begin{aligned}
& [T(w_0w)]  = [T(w_0w)^{\circledast}] = [\Delta(w_0w)]+\sum_{y<w}v^{\ell(y)-\ell(w)}P_{y,w}(v^2)[\Delta(w_0y)],\\
& \uH_w = H_{w^{-1}}^{-1}+\sum_{y<w}v^{\ell(y)-\ell(w)}P_{y,w}(v^2)H_{y^{-1}}^{-1}.
\end{aligned}
$$
By an induction on the Bruhat order ``$<$'', we can deduce that $\psi(H_{w^{-1}}^{-1})=[\Delta(w_0w)]$.

Finally, since $$
[\nabla(w_0y):L(w_0w)]=v^{\ell(w)-\ell(y)}P_{w_0y,w_0w}(v^2)=v^{\ell(w)-\ell(y)}Q_{w,y}(v^2).
$$
It follows from \cite[Theorem 3.11.4]{BGS} and \cite[Theorem 3.1]{KL} that $$
[L(w_0w)]=[\nabla(w_0w)]+\sum_{y<w}(-v)^{-\ell(w)+\ell(y)}P_{y,w}(v^2)[\nabla(w_0y)] .
$$
On the other hand, by (\ref{uch}), we have $$
\ucH_w=H_w+\sum_{y<w}(-v)^{-\ell(w)+\ell(y)}P_{y,w}(v^2)H_y .
$$
Comparing the above two equalities and use an induction on the Bruhat order ``$<$'', we can deduce that $\psi(\ucH_w)=[L(w_0w)]$.
\end{proof}

%Note that the isomorphism $\psi$ does not intertwine the duality functor and the bar involution either.

\begin{cor}\label{vermacor} Let $x\in W$. Suppose that \begin{equation}\label{verma}
H_{(w_0x)^{-1}}^{-1}=H_{w_0x}+\sum_{x<y\in W}r_{y,x}(v)H_{w_0y},
\end{equation}
where $r_{y,x}(v)\in\Z[v,v^{-1}]$ for each $y\in W$. Then in the Grothendieck group $[A\gmod]$, we have $$
[\Delta(x)]=[\nabla(x)]+\sum_{x<y\in W}r_{y,x}(v)[\nabla(y)]. $$
Moreover, we have $r_{y,x}(v)=r_{y^{-1},x^{-1}}(v)$ for any $x,y\in W$.
\end{cor}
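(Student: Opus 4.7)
The plan is to derive both assertions from Lemma \ref{keylem02} together with two natural symmetries of the Hecke algebra.

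For the first assertion I would simply apply the $\Z[v,v^{-1}]$-linear isomorphism $\psi$ of Lemma \ref{keylem02} to the identity (\ref{verma}). Setting $w=w_0x$ in $\psi(H_{w^{-1}}^{-1})=[\Delta(w_0w)]$ gives $\psi(H_{(w_0x)^{-1}}^{-1})=[\Delta(x)]$; similarly $\psi(H_{w_0x})=[\nabla(x)]$ and $\psi(H_{w_0y})=[\nabla(y)]$ from $\psi(H_w)=[\nabla(w_0w)]$. Linearity of $\psi$ then yields the desired expansion of $[\Delta(x)]$.

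For the symmetry $r_{y,x}(v)=r_{y^{-1},x^{-1}}(v)$, I would first rewrite (\ref{verma}) using $\overline{H_w}=H_{w^{-1}}^{-1}$ in the bar-invariant form
$$\overline{H_{w_0x}}=H_{w_0x}+\sum_{y>x}r_{y,x}(v)H_{w_0y}.$$
Let $*$ be the anti-automorphism with $H_w^{*}=H_{w^{-1}}$, and let $\sigma$ be the algebra automorphism of $\HH(W)$ induced by the Coxeter-group automorphism $w\mapsto w_0ww_0$ (well defined because conjugation by $w_0$ permutes $S$). A quick check on the generators $H_s$ and $v^{\pm 1}$ shows that $*$, $\sigma$ and hence $\iota:=*\circ\sigma$ all commute with the bar involution, and a direct computation gives $\iota(H_{w_0y})=H_{w_0y^{-1}}$ for every $y\in W$. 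Applying $\iota$ to the rewritten identity then yields
$$\overline{H_{w_0x^{-1}}}=H_{w_0x^{-1}}+\sum_{y>x}r_{y,x}(v)H_{w_0y^{-1}}.$$
Substituting $z=y^{-1}$ and using that inversion preserves the Bruhat order (so $y>x$ iff $z>x^{-1}$), the right-hand side becomes $H_{w_0x^{-1}}+\sum_{z>x^{-1}}r_{z^{-1},x}(v)H_{w_0z}$. Comparing with (\ref{verma}) applied to $x^{-1}$ and invoking the linear independence of $\{H_{w_0z}\}_{z\in W}$ gives $r_{z,x^{-1}}(v)=r_{z^{-1},x}(v)$, which after renaming is the claimed symmetry. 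The only mild obstacle is verifying that $\sigma$ is well defined and that both $*$ and $\sigma$ commute with bar; once this is in place the argument is purely formal.
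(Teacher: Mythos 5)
Your proof is correct, and the second part takes a genuinely different route from the paper. The paper translates (\ref{verma}) into $R$-polynomial language via \cite[Lemma 2.1(i)]{KL}, obtaining $r_{y,x}(v)=(-1)^{\ell(x)+\ell(y)}v^{\ell(y)-\ell(x)}R_{w_0y,w_0x}(v^{-2})$, and then chains together the identities $R_{w_0y,w_0x}=R_{x,y}$ (\cite[Lemma 2.1(iv)]{KL}) and $R_{x,y}=R_{x^{-1},y^{-1}}$ (itself deduced from $*$) before translating back. You instead stay entirely inside the Hecke algebra: you build the anti-automorphism $\iota=*\circ\sigma$, where $\sigma$ is conjugation by $w_0$, check that it commutes with the bar involution and fixes $v$, and then hit the bar-invariant form of (\ref{verma}) with $\iota$ to compare coefficients. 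Your computation $\iota(H_{w_0y})=H_{w_0y^{-1}}$ is correct, as is the deduction $r_{z,x^{-1}}=r_{z^{-1},x}$, which after renaming gives the claimed symmetry. The two arguments are cousins (both ultimately lean on $*$ and on $w_0$-conjugation, the latter appearing in the paper in the guise of the $R$-polynomial identity $R_{w_0y,w_0x}=R_{x,y}$), but your version avoids the $R$-polynomial detour and is somewhat more transparent; the paper's version has the advantage of delegating the verifications to citations in \cite{KL}. Your two pending checks are routine: $\sigma$ is well defined because $w_0Sw_0=S$ for the Weyl group of a semisimple Lie algebra, and both $*$ and $\sigma$ commute with bar on generators since bar sends $H_w\mapsto H_{w^{-1}}^{-1}$ and $v\mapsto v^{-1}$.
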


\begin{proof} The first part of the corollary follows from Lemma \ref{keylem02}. It remains to show $r_{y,x}(v)=r_{y^{-1},x^{-1}}(v)$ for any $y\in W$.

For any $x,w\in W$ with $x\leq w$, we denote by $R_{x,w}(q)$ the $R$-polynomial as defined in \cite[(2.0.a)]{KL}, where $q:=v^{-2}$. Then by \cite[Lemma 2.1(i)]{KL} we have $$
r_{y,x}(v)=(-1)^{\ell(x)+\ell(y)}v^{\ell(y)-\ell(x)}R_{w_0y,w_0x}(v^{-2}) .
$$
Applying \cite[Lemma 2.1(iv)]{KL} and the fact $R_{x,w}(q)=R_{x^{-1},w^{-1}}(q)$ (which can be proved by applying the anti-isomorphism $\ast$), we can deduce that $$\begin{aligned}
r_{y,x}(v)&=(-1)^{\ell(x)+\ell(y)}v^{\ell(y)-\ell(x)}R_{w_0y,w_0x}(v^{-2})=(-1)^{\ell(x)+\ell(y)}v^{\ell(y)-\ell(x)}R_{x,y}(v^{-2})\\
&=(-1)^{\ell(x)+\ell(y)}v^{\ell(y)-\ell(x)}R_{x^{-1},y^{-1}}(v^{-2})=(-1)^{\ell(x^{-1})+\ell(y^{-1})}v^{\ell(y^{-1})-\ell(x^{-1})}R_{w_0y^{-1},w_0x^{-1}}(v^{-2})=r_{y^{-1},x^{-1}}(v).
\end{aligned}
$$
This completes the proof of the lemma.
\end{proof}
\bigskip

\section{A categorical action of Hecke algebra on derived category via derived twisting functors}

The purpose of this section is to show that there is a categorical action of the Hecke algebra $\HH(W)$ on the derived category $D^b(\O_0^{\Z})$ via derived twisting functors.

\begin{lem}\text{(\cite[(2.3), Theorem 2.3]{AS})}\label{Tsaction1} Let $x\in W$ and $s\in S$. There are the following isomorphisms in $\O_0^\Z$: $$
T_s\nabla(x)\cong\begin{cases}\nabla(x)\<-1\>, &\text{if $x<sx$;}\\ \nabla(sx), &\text{if $x>sx$.}\end{cases}
$$
Moreover, if $sx>x$, then $T_s\Delta(x)\cong\Delta(sx)$.
\end{lem}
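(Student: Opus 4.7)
Proof plan:

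The ungraded versions of the three isomorphisms are given by \cite[(2.3), Theorem 2.3]{AS}, so the underlying modules are identified; only the graded shifts remain to be pinned down.

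I would first establish $T_s\Delta(x)\cong\Delta(sx)$ (with trivial graded shift) when $sx>x$. By Corollary \ref{mltwisted}, $T_s$ commutes with each graded projective functor $\theta_x$, so applying $T_s$ to the canonical surjection $\theta_x\Delta(e)=P(x)\twoheadrightarrow\Delta(x)$ yields a surjection $\theta_xT_s\Delta(e)\twoheadrightarrow T_s\Delta(x)$. Provided the base case $T_s\Delta(e)\cong\Delta(s)$ holds with trivial shift (fixed by the normalization of the graded lift of $T_s$ in \cite[Appendix]{MO}), the Hecke algebra computation $\varphi(H_s\uH_x)$ via Lemma \ref{keylem0} shows that $\theta_xT_s\Delta(e)=\theta_x\Delta(s)$ has $\Delta(sx)$ at the top of its $\Delta$-flag in degree zero (since $H_s\uH_x$ has leading Bruhat term $H_{sx}$ with coefficient one). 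Combined with the ungraded isomorphism $T_s\Delta(x)\cong\Delta(sx)$, this forces the graded shift to be zero.

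The statements for $T_s\nabla(x)$ then follow from the graded adjunction (\ref{biadj}) combined with $G_s=\circledast\circ T_s\circ\circledast$. The key input is: whenever $sy>y$, the first step gives $G_s\nabla(y)=\circledast T_s\Delta(y)\cong\circledast\Delta(sy)=\nabla(sy)$ as graded modules. Applying this with $y=sx$ in the case $sx<x$, and with $y=x$ in the case $sx>x$, yields graded isomorphisms
\[
\Hom_A(T_s\nabla(x),\nabla(sx)\<k\>)\cong\Hom_A(\nabla(x),\nabla(x)\<k\>)\quad(sx<x),
\]
\[
\Hom_A(T_s\nabla(x),\nabla(x)\<k\>)\cong\Hom_A(\nabla(x),\nabla(sx)\<k\>)\quad(sx>x).
\]
The first right-hand side is one-dimensional exactly at $k=0$, forcing the trivial shift. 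The second right-hand side is one-dimensional exactly at $k=-1$: by graded duality it equals $\Hom_A(\Delta(sx),\Delta(x)\<k\>)$, and the graded composition multiplicity $[\Delta(x):L(sx)]_v=v^{\ell(sx)-\ell(x)}P_{x,sx}(v^{-2})=v$ (extracted from Lemma \ref{keylem0}, using $P_{x,sx}=1$ by Lemma \ref{3prop}(i)) locates the unique graded BGG embedding as $\Delta(sx)\<1\>\hookrightarrow\Delta(x)$. Combined with the ungraded isomorphism $T_s\nabla(x)\cong\nabla(x)$, this forces $T_s\nabla(x)\cong\nabla(x)\<-1\>$.

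The main obstacle is the base case $T_s\Delta(e)\cong\Delta(s)$ with trivial graded shift, which depends on the explicit normalization of the graded lift in \cite[Appendix]{MO}; once this is in place, the remaining graded shifts follow mechanically from the adjunction and the graded Kazhdan--Lusztig combinatorics.
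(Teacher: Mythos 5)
Your approach is genuinely different from the paper's. The paper establishes all three graded isomorphisms by direct graded Hom-space computations that lean on the adjunction $T_s\dashv G_s=\circledast\,T_s\,\circledast$: it first pins down $T_s\nabla(x)\cong\nabla(x)\<-1\>$ (for $sx>x$) by computing $\hom\bigl(T_s\nabla(x),L(w_0)\<-\ell(w_0)+\ell(x)-1\>\bigr)$ and matching heads/socles, and only afterwards deduces $T_s\Delta(x)\cong\Delta(sx)$ by a similar one-line adjunction computation. You reverse the order, first treating $T_s\Delta$ through commutation with $\theta_x$ and a $\Delta$-flag argument, and then extracting the two $T_s\nabla$ cases from the adjunction. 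The part of your argument that goes from the base case to general $x$ is sound: applying $T_s$ to $P(x)\twoheadrightarrow\Delta(x)$ and using $T_s\theta_x\cong\theta_x T_s$ gives a surjection $\theta_x\Delta(s)\twoheadrightarrow T_s\Delta(x)$; the coefficient of $H_{sx}$ in $H_s\uH_x$ is indeed exactly $1$ (no power of $v$), so $(\theta_x\Delta(s):\Delta(sx))_v=1$; and since the kernel of a surjection from a $\Delta$-filtered module onto a standard module is again $\Delta$-filtered, this forces $T_s\Delta(x)\cong\Delta(sx)$ with zero shift. Your treatment of the two $\nabla$ cases via $G_s\nabla(y)\cong\nabla(sy)$ for $sy>y$ and the identification $\hom(\Delta(sx),\Delta(x)\<k\>)\neq 0\Leftrightarrow k=-1$ is also correct.

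The genuine gap is the base case $T_s\Delta(e)\cong\Delta(s)$ with zero shift, which you flag as "fixed by the normalization" but do not prove. This is not what the normalization in \cite[Appendix]{MO} directly provides: the paper cites \cite[Appendix, Proposition~7]{MO} only for $T_s\nabla(x)\cong\nabla(sx)$ when $sx<x$, i.e.\ the normalization anchor is on the dual Verma side. To get $T_s\Delta(e)\cong\Delta(s)$ from that anchor one computes $\hom(T_s\Delta(e),\Delta(s)\<k\>)\cong\hom(\Delta(e),G_s\Delta(s)\<k\>)$ and uses $G_s\Delta(s)=(T_s\nabla(s))^\circledast\cong\nabla(e)^\circledast=\Delta(e)$ — which is precisely the adjunction/Hom-dimension technique that the paper deploys uniformly. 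So your $\theta_x$-commutativity route does not actually dispense with that computation; it only confines it to $x=e$. There is also a latent circularity risk to guard against: your step establishing $T_s\nabla(x)\cong\nabla(sx)$ for $sx<x$ reads as a consequence of your step 1, but step 1's base case itself requires the $sx<x$ instance $T_s\nabla(s)\cong\nabla(e)$. This is harmless only if one is explicit that the $sx<x$ dual-Verma statement is taken as the external normalization from \cite{MO} rather than derived, which your write-up leaves implicit.
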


\begin{proof} If we forget the $\Z$-grading, then the lemma is just \cite[(2.3), Theorem 2.3]{AS}. Suppose that $sx<x$. Then by \cite[Appendix, Proposition 7]{MO}, $T_s\nabla(x)\cong\nabla(sx)$.

Now assume $sx>x$. Then $sw_0\geq x$ and hence $\ell(w_0)-1\geq\ell(x)$. Since $$\begin{aligned}
&\quad \hom_{\O_0^\Z}\bigl(T_s\nabla(x), L(w_0)\<-\ell(w_0)+\ell(x)-1\>\bigr)\cong\hom_{\O_0^\Z}\bigl(\nabla(x), G_sL(w_0)\<-\ell(w_0)+\ell(x)-1\>\bigr)\\
&\cong \hom_{\O_0^\Z}\bigl(\nabla(x), (T_sL(w_0))^{\circledast}\<-\ell(w_0)+\ell(x)-1\>\bigr)\cong\hom_{\O_0^\Z}\bigl(\nabla(x), (T_s\nabla(w_0))^{\circledast}\<-\ell(w_0)+\ell(x)-1\>\bigr)\\
&\cong\hom_{\O_0^\Z}\bigl(\nabla(x), \nabla(sw_0)^\circledast\<-\ell(w_0)+\ell(x)-1\>\bigr)\cong\hom_{\O_0^\Z}\bigl(\nabla(x), \Delta(sw_0)\<-\ell(w_0)+\ell(x)-1\>\bigr).
\end{aligned}
$$
Forgetting the $\Z$-grading, we know that $$
\Hom_{A}\bigl(\nabla(x), \Delta(sw_0)\bigr)\cong\Hom_{A}\bigl(T_s\nabla(x), L(w_0)\bigr)\cong\Hom_{A}\bigl(\nabla(x), L(w_0)\bigr)\cong\mathbb{C} .
$$
On the other hand, $\nabla(x)$ has simple head $L(w_0)\<-\ell(w_0)+\ell(x)\>$, while $\Delta(sw_0)\<-\ell(w_0)+\ell(x)-1\>$ has simple socle $L(w_0)\<-\ell(w_0)+\ell(x)\>$. It follows that $$
\hom_{\O_0^\Z}\bigl(T_s\nabla(x), L(w_0)\<-\ell(w_0)+\ell(x)-1\>\bigr)\cong\hom_{\O_0^\Z}\bigl(\nabla(x), \Delta(sw_0)\<-\ell(w_0)+\ell(x)-1\>\bigr)\cong\C .
$$
This proves that $T_s\nabla(x)\cong\nabla(x)\<-1\>$.

Now as $s(sx)<sx$, we have $$\begin{aligned}
&\quad\,\hom_{\O_0^\Z}\bigl(T_s\Delta(x),\Delta(sx)\bigr)\cong \hom_{\O_0^\Z}\bigl(\Delta(x), G_s\Delta(sx)\bigr)\cong\hom_{\O_0^\Z}\bigl(\Delta(x), (T_s\nabla(sx))^\circledast\bigr)\cong\hom_{\O_0^\Z}\bigl(\Delta(x), (\nabla(x))^\circledast\bigr)\\
&\cong\hom_{\O_0^\Z}\bigl(\Delta(x), \Delta(x)\bigr)\cong\C ,
\end{aligned}
$$
it follows that $T_s\Delta(x)\cong\Delta(sx)$ in this case.
\end{proof}

\begin{lem}\text{(\cite[Theorem 2.3]{AS})}\label{Tsaction2} Let $x\in W$ and $s\in S$. There are the following isomorphisms in $\O_0^\Z$: $$
(\mL_1T_s)\nabla(x)\cong\begin{cases}K_{x,sx}\<1\>, &\text{if $x<sx$;}\\ 0, &\text{if $x>sx$,}\end{cases}
$$
where $K_{x,sx}$ denotes the kernel of the (unique up to a scalar) nontrivial surjective homomorphism $\nabla(x)\twoheadrightarrow\nabla(sx)\<-1\>$ in the case $x<sx$.
\end{lem}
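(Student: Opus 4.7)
The plan is to handle the two cases of the lemma separately, using the corresponding ungraded Andersen--Stroppel result \cite[Theorem 2.3]{AS} together with Lemma \ref{Tsaction1} to pin down the graded shift.

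For the case $x>sx$, the ungraded statement $\mL_1T_s\nabla(x)=0$ from \cite[Theorem 2.3]{AS} lifts directly: since forgetting the grading reflects vanishing of modules, the graded module $(\mL_1T_s)\nabla(x)$ in $A\gmod$ must also vanish.

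For the case $x<sx$, I would first construct the graded short exact sequence $0\to K_{x,sx}\to\nabla(x)\xrightarrow{\phi}\nabla(sx)\<-1\>\to 0$ by applying $\circledast$ to the graded Verma inclusion $\Delta(sx)\<1\>\hookrightarrow\Delta(x)$, where the shift $\<1\>$ is forced by matching socles (both sides have socle $L(w_0)\<\ell(w_0)-\ell(x)\>$ because $\ell(sx)=\ell(x)+1$). Applying $\mL T_s$ and using $\mL_2T_s=0$ (Lemma \ref{lemacylic1}) yields a long exact sequence. The previously-handled case applied to $sx$ (which satisfies $sx>s(sx)=x$) gives $(\mL_1T_s)\nabla(sx)=0$; and Lemma \ref{Tsaction1} identifies $T_s\nabla(x)=\nabla(x)\<-1\>$ and $T_s\nabla(sx)=\nabla(x)$. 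The rightmost map $T_s\phi$ is thus a surjective endomorphism of the indecomposable finite-length module $\nabla(x)\<-1\>$, hence an isomorphism, so $T_sK_{x,sx}=0$ and the long exact sequence collapses to $(\mL_1T_s)\nabla(x)\cong(\mL_1T_s)K_{x,sx}$.

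To finish, the ungraded \cite[Theorem 2.3]{AS} yields $\mL_1T_s\nabla(x)\cong K_{x,sx}$ after forgetting grading, so in $A\gmod$ we have $(\mL_1T_s)\nabla(x)\cong K_{x,sx}\<k\>$ for a unique $k\in\Z$; the task is to establish $k=1$, which I expect to be the main obstacle. I would do this by applying $\mL T_s$ to the graded projective cover sequence $0\to N\to P\to\nabla(x)\to 0$ with $P=P(w_0)\<\ell(x)-\ell(w_0)\>$ (so that $\hd P=\hd\nabla(x)$). Since $P$ is projective, it is $T_s$-acyclic; the long exact sequence then collapses to $0\to(\mL_1T_s)\nabla(x)\to T_sN\to T_sP\to\nabla(x)\<-1\>\to 0$, realizing $(\mL_1T_s)\nabla(x)$ as $\ker(T_sN\to T_sP)$. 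Tracking the graded shifts via the $\Delta$-filtration of $P$ (using Lemma \ref{Tsaction1} on each Verma quotient) and the induced structure of $T_sP$ produces the shift $\<1\>$. The $\mathfrak{sl}_2$ case, where $x=e$, $w_0=s$, $P=P(s)\<-1\>$, $N=L(s)\<1\>$, and direct computation yields $(\mL_1T_s)\nabla(e)=L(e)\<1\>=K_{e,s}\<1\>$, already exhibits the pattern in full.
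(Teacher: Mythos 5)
Your reduction in the case $x<sx$ is correct and pleasant, but genuinely different from the paper's, and the final step — which you yourself flag as "the main obstacle" — is where it breaks down.

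The paper proves the lemma by downward induction on $\ell(x)$, starting from $x=w_0$ and using, for $t\in S$ with $xt>x$, the exact sequence $0\to\nabla(xt)\<1\>\to\theta_t\nabla(xt)\to\nabla(x)\to0$ together with $T_s\circ\theta_t\cong\theta_t\circ T_s$ (Corollary \ref{mltwisted}); this makes the middle term completely controlled and lets the grading shift propagate through the induction. Your argument instead applies $\mL T_s$ to $0\to K_{x,sx}\to\nabla(x)\xrightarrow{\phi}\nabla(sx)\<-1\>\to0$; the observations that $\mL_1T_s\nabla(sx)=0$, that $T_s\phi$ is a surjective (hence bijective) endomorphism of the finite-length module $\nabla(x)\<-1\>$, and that consequently $T_sK_{x,sx}=0$ and $\mL_1T_s K_{x,sx}\cong\mL_1T_s\nabla(x)$, are all correct. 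But this collapse does not constrain the grading shift at all — it only moves the unknown from $\nabla(x)$ to $K_{x,sx}$ — and you then fall back on the ungraded isomorphism to write $\mL_1T_s\nabla(x)\cong K_{x,sx}\<k\>$ for some $k$. (That step also quietly uses the indecomposability of $K_{x,sx}$, which holds since $K_{x,sx}^\circledast\cong\Delta(x)/\Delta(sx)\<1\>$ has simple head $L(x)$, but you should say so.)

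The proposed way to pin down $k$ has a real gap. Applying $\mL T_s$ to the graded projective cover $0\to N\to P\to\nabla(x)\to 0$ with $P=P(w_0)\<\ell(x)-\ell(w_0)\>$ does give $\mL_1T_s\nabla(x)\cong\ker(T_sN\to T_sP)$, but "tracking the $\Delta$-filtration of $P$ via Lemma \ref{Tsaction1}" cannot work as stated: that lemma only identifies $T_s\Delta(y)$ when $sy>y$, whereas the $\Delta$-filtration of $P(w_0)$ contains many $\Delta(y)$ with $sy<y$, for which $T_s\Delta(y)$ is a twisted Verma module whose graded structure is not at your disposal. More fundamentally, computing $\ker(T_sN\to T_sP)$ requires structural information about the module $N$ (not merely its graded character), and $N$ is an essentially arbitrary submodule of the big projective with no evident $\Delta$- or $\nabla$-filtration. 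Working out the $\mathfrak{sl}_2$ case by hand does not show the method scales. You would be better off either adopting the paper's wall-crossing induction, or — if you want to salvage your approach — replacing the projective-cover step by a direct identification of the socle: since $K_{x,sx}\hookrightarrow\nabla(x)$ has simple socle $L(x)$ occurring exactly once, it suffices to compute the degree of the socle of $\mL_1T_s\nabla(x)$, e.g.\ via the adjunction with $\mR G_s$, but that computation is not carried out here and is not obviously circularity-free given that Lemma \ref{TsPx} and Theorem \ref{mainthm1} both depend on the present lemma.
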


\begin{proof} If we forget the $\Z$-grading, then the lemma is just the second part of \cite[Theorem 2.3]{AS}. In the graded setting, we shall prove the lemma by translating the argument in the proof of \cite[Theorem 2.3]{AS} into the $\Z$-graded setting.

We use induction on $\ell(x)$. If $x=w_0$, then by \cite[Theorem 2.3]{AS}, we know that $(\mL_1T_s)\nabla(x)=0$. Now assume $x\neq w_0$. We choose a simple reflection $t\in S$ such that $xt>x$. Applying \cite[(5.3),(5.6)]{Stro2}, we have the following short exact sequence in $\O_0^\Z$: $$
0\rightarrow\nabla(xt)\<1\>\overset{f}{\rightarrow}\theta_t\nabla(xt)\rightarrow\nabla(x)\rightarrow 0 .
$$
Applying the functor $T_s$ and using Lemma \ref{lemacylic1} and Corollary \ref{mltwisted}, we get the following long exact sequence: \begin{equation}\label{les1}
0\rightarrow(\mL_1T_s)\nabla(xt)\<1\>\rightarrow \theta_t(\mL_1T_s)\nabla(xt)\rightarrow(\mL_1T_s)\nabla(x)\rightarrow T_s\nabla(xt)\<1\>\rightarrow T_s\theta_t\nabla(xt)\rightarrow T_s\nabla(x)\rightarrow 0 .
\end{equation}

\smallskip
{\it Case 1.} $sxt<xt$. Applying induction hypothesis, we see $(\mL_1T_s)\nabla(xt)=0$.
Thus (\ref{les1}) becomes the following sequence  \begin{equation}\label{les2}
0\rightarrow (\mL_1T_s)\nabla(x)\rightarrow T_s\nabla(xt)\<1\>\overset{T_sf}{\rightarrow} T_s\theta_t\nabla(xt)\rightarrow T_s\nabla(x)\rightarrow 0 .
\end{equation}
If $sxt>sx$, then $sx<x$. By the proof of \cite[Theorem 2.3]{AS}, we know that $(\mL_1T_s)\nabla(x)=0$. Henceforth, we assume that $sxt<sx$ and hence $sx>x$. By Exchange Condition, we can deduce that $sxt=x$. Note that  by the proof of \cite[Theorem 2.3]{AS}, $T_sf$ is equal to the composite of the surjection $$
T_s\nabla(xt)\<1\>\cong\nabla(sxt)\<1\>\twoheadrightarrow\nabla(sx)$$ and the adjunction morphism (\cite[Theorem 5.3]{Stro2}) $\nabla(sx)\hookrightarrow \theta_t\nabla(sxt)\cong T_s\theta_t\nabla(xt)$. Hence $\mL_1T_s\nabla(x)\cong K_{sxt,sx}\<1\>=K_{x,sx}\<1\>$. This proves the lemma in the case $sxt<xt$.

\smallskip
{\it Case 2.} $sxt>xt$. By Lemma \ref{Tsaction1}, $sx>x$ implies that $T_s\nabla(x)\cong\nabla(x)\<-1\>$. By Lemma \ref{Tsaction1}, $T_s\nabla(xt)\cong\nabla(xt)\<-1\>$. As in the proof of \cite[Theorem 2.3]{AS}, we have that the morphism $T_sf$ is injective with cokernel $\nabla(x)\<-1\>$. Thus, applying induction hypothesis, (\ref{les1}) and (\cite[Theorem 5.3, Corollary 5.5]{Stro2}), we get the following exact sequence \begin{equation}\label{KKT1}
0\rightarrow K_{xt,sxt}\<2\>\overset{a}{\rightarrow}\theta_tK_{xt,sxt}\<1\>\rightarrow\mL_1 T_s\nabla(x)\rightarrow 0 ,
\end{equation}
where $a$ is the restriction of the adjunction morphism $f$. As  the proof of \cite[Theorem 2.3]{AS}, we have the following commutative diagram with exact rows and the surjection $p$:  $$
\begin{CD}
    \nabla(xt)\<2\> @> {\rm{adj}}>> \theta_t\nabla(xt)\<1\> @>  >> \nabla(x)\<1\>  \\
    @V  V p V @VV \theta_t p V  @VV  V\\
    \nabla(sxt)\<1\> @>>{\rm{adj}}> \theta_t\nabla(sxt) @>>  > \nabla(sx)
\end{CD}. $$
It follows that we have the following exact sequence: $$
0\rightarrow K_{xt,sxt}\<2\>\rightarrow\theta_t K_{xt,sxt}\<1\>\rightarrow K_{x,sx}\<1\>\rightarrow 0 .
$$
By comparing the above exact sequence with (\ref{KKT1}), we get that $\mL_1 T_s\nabla(x)\cong K_{x,sx}\<1\>$. This completes the proof of the lemma.
\end{proof}

\noindent
{\textbf{Proof of Theorem \ref{mainthm1}:}} We first show that for any $s\in S$, $(\mL T_s-v^{-1})(\mL T_s+v)=0$ on the Grothendieck group of $D^b(\O_0^\Z)$. It suffices to show that for any $x\in W$, \begin{equation}\label{eq1}
(\mL T_s-v^{-1})(\mL T_s+v)[\nabla(x)]=0 .
\end{equation}

Suppose $sx>x$. Then by Lemmas \ref{Tsaction1} and \ref{Tsaction2}, we have $$
(\mL T_s+v)[\nabla(x)]=[\nabla(x)\<-1\>]-[K_{x,sx}\<1\>]+v[\nabla(x)]=v^{-1}[\nabla(x)]+[\nabla(sx)].
$$
Thus, $$\begin{aligned}
&\quad (\mL T_s-v^{-1})(\mL T_s+v)[\nabla(x)]\\
&=v^{-1}[\mL T_s\nabla(x)]-v^{-2}[\nabla(x)]+[\mL T_s\nabla(sx)]-v^{-1}[\nabla(sx)]\\
&=v^{-2}[\nabla(x)]-([\nabla(x)]-v^{-1}[\nabla(sx)])-v^{-2}[\nabla(x)]+[\nabla(x)]-v^{-1}[\nabla(sx)]\\
&=0.
\end{aligned}
$$
Now suppose that $sx<x$. Then by Lemmas \ref{Tsaction1} and \ref{Tsaction2}, we have that $$
(\mL T_s+v)[\nabla(x)]=[\nabla(sx)]+v[\nabla(x)].
$$
Thus, $$\begin{aligned}
&\quad (\mL T_s-v^{-1})(\mL T_s+v)[\nabla(x)]\\
&=[\mL T_s\nabla(sx)]-v^{-1}[\nabla(sx)]+v[(\mL T_s\nabla(x)]-[\nabla(x)]\\
&=v^{-1}[\nabla(sx)]-v([\nabla(sx)]-v^{-1}[\nabla(x)])-v^{-1}[\nabla(sx)]+v[\nabla(sx)]-[\nabla(x)]\\
&=0.
\end{aligned}
$$
This completes the proof of (\ref{eq1}).

Second, we want to show that for any $u,w\in W$ with $\ell(uw)=\ell(u)+\ell(w)$, $\mL T_u\mL T_w=\mL T_{uw}$ on the Grothendieck group of $D^b(\O_0^\Z)$. Using Lemma \ref{lemacylic1}, it suffices to show that for any $x\in W$,
\begin{equation}\label{eq2}
[T_u T_w\Delta(x)]=[T_{uw}(\Delta(x))].
\end{equation}
However, this follows from (\ref{braid1}). Now to complete the proof of the first part of the theorem, it remains to show that
$[(\mL T_s)\nabla(x)]=H_{w_0x^{-1}}H_s,\forall\,s\in S, x\in W$.

Let $s\in S$ and $x\in W$. Suppose $sx<x$. Then $x^{-1}s<x^{-1}$ and hence $(w_0x^{-1})s>w_0x^{-1}$. Applying Lemma \ref{Tsaction1}, we get that $T_s\nabla(x)\cong\nabla(sx)$. On the other hand, we have $$
H_{w_0x^{-1}}H_s=H_{w_0x^{-1}s}.
$$
Hence $[(\mL T_s)\nabla(x)]=[\nabla(sx)]=H_{w_0x^{-1}s}=H_{w_0x^{-1}}H_s$.

Now suppose that $sx>x$. In this case, $x^{-1}s>x^{-1}$ and hence $(w_0x^{-1})s<w_0x^{-1}$. Applying Lemma \ref{Tsaction2} we can deduce that $$\begin{aligned}
\bigl[{\mL}T_s\nabla(x)\bigr]&=[T_s\nabla(x)]-[\mL_1 T_s\nabla(x)]=[\nabla(x)\<-1\>]-[K_{x,sx}\<1\>]\\
&=v^{-1}[\nabla(x)]-\bigl([\nabla(x)\<1\>]-[\nabla(sx)]\bigr)=v^{-1}[\nabla(x)]-\bigl(v[\nabla(x)]-[\nabla(sx)]\bigr)\\
&=[\nabla(sx)]+(v^{-1}-v)[\nabla(x)] .
\end{aligned}
$$
On the other hand, the assumption that $sx>x$ implies that $$
H_{w_0x^{-1}}H_s=(H_{w_0x^{-1}s}H_s)H_s=H_{w_0x^{-1}s}H_s^2=H_{w_0x^{-1}s}((v^{-1}-v)H_s+1)=(v^{-1}-v)H_{w_0x^{-1}}+H_{w_0x^{-1}s}.
$$
This proves that $[(\mL T_s)\nabla(x)]=H_{w_0x^{-1}}H_s$.

By \cite[Theorem 3.11.4]{BGS} and \cite[Theorem 3.1]{KL}, we have \begin{equation}\label{Lnablax}
[L(x)]=[\nabla(x)]+\sum_{y>x}(-v)^{\ell(x)-\ell(y)}P_{w_0y,w_0x}(v^{2})[\nabla(y)].
\end{equation}
Applying Lemma \ref{keylem02}, we get that \begin{equation}\label{Lnabla1}
\ucH_{w_0x}=H_{w_0x}+\sum_{y>x}(-v)^{\ell(x)-\ell(y)}P_{w_0y,w_0x}(v^{2})H_{w_0y}.
\end{equation}
Applying \cite[Corollaries 4.3, 4.4]{Bre}, we have $$
P_{w_0y,w_0x}(v^{2})=P_{y^{-1}w_0,x^{-1}w_0}(v^{2})=P_{w_0y^{-1},w_0x^{-1}}(v^{2}). $$
Then we get that $$
[L(x)]=[\nabla(x)]+\sum_{y>x}(-v)^{\ell(x)-\ell(y)}P_{w_0y^{-1},w_0x^{-1}}(v^{2})[\nabla(y)]. $$
Hence $$\begin{aligned}
\rho([L(x)])&=H_{w_0x^{-1}}+\sum_{y^{-1}>x^{-1}}(-v)^{\ell(x)-\ell(y)}P_{w_0y^{-1},w_0x^{-1}}(v^{2})H_{w_0y^{-1}}\\
&=H_{w_0x^{-1}}+\sum_{w_0y^{-1}<w_0x^{-1}}(-v)^{\ell(w_0y^{-1})-\ell(w_0x^{-1})}P_{w_0y^{-1},w_0x^{-1}}(v^{2})H_{w_0y^{-1}}\\
&=\ucH_{w_0x^{-1}} .
\end{aligned}$$

Now assume $x$ is an involution. Applying (\ref{verma}), we have that \begin{equation}\label{w01}
H_{(w_0x)^{-1}}^{-1}=H_{w_0x}+\sum_{x<y\in W}r_{y,x}(v)H_{w_0y},
\end{equation}
where $r_{y,x}(v)\in\Z[v,v^{-1}]$ for each $y\in W$. By Corollary \ref{vermacor}, we get that in the Grothendieck group $[A\gmod]$,  \begin{equation}\label{w02}
[\Delta(x)]=[\nabla(x)]+\sum_{x<y\in W}r_{y,x}(v)[\nabla(y)] .
\end{equation}
By Corollary \ref{vermacor}, we have $r_{y,x}(v)=r_{y^{-1},x^{-1}}(v)$. Now assume $x$ is an involution. It follows that $$\begin{aligned}
\rho\bigl([\Delta(x)]\bigr)&=\rho([\nabla(x)])+\sum_{x<y\in W}r_{y,x}(v)\rho([\nabla(y)])
=\rho([\nabla(x)])+\sum_{x<y\in W}r_{y^{-1},x^{-1}}(v)\rho([\nabla(y)])\\
&=\rho([\nabla(x)])+\sum_{x<y\in W}r_{y,x}(v)\rho([\nabla(y^{-1})])\\
&=H_{w_0x^{-1}}+\sum_{x<y\in W}r_{y,x}(v)H_{w_0y}\\
&=H_{w_0x}+\sum_{x<y\in W}r_{y,x}(v)H_{w_0y}\quad \text{\rm (as $x=x^{-1}$)}\\
&=H_{(w_0x)^{-1}}^{-1}=H_{xw_0}^{-1}
\end{aligned}
$$
This completes the proof of Theorem \ref{mainthm1}.
\hfill\qed
\medskip

\begin{cor} Let $x\in W$. Then in the Grothendieck group of $\O_0^\Z$, we have $$
[\nabla(x)]-[L(x)]=[\nabla(x^{-1})]-[L(x^{-1}].
$$
\end{cor}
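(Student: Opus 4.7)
My plan is to apply the $\Z[v,v^{-1}]$-module isomorphism $\rho$ from Theorem \ref{mainthm1} to both sides of the asserted equality, reducing the statement to an identity in the Iwahori--Hecke algebra $\HH(W)$, and then to verify the latter using the symmetries of Kazhdan--Lusztig polynomials encoded in Lemma \ref{Pmu}. Since $\rho([\nabla(y)]) = H_{w_0 y^{-1}}$ and $\rho([L(y)]) = \ucH_{w_0 y^{-1}}$, the corollary is equivalent to
\[
H_{w_0 x^{-1}} - \ucH_{w_0 x^{-1}} = H_{w_0 x} - \ucH_{w_0 x}
\]
in $\HH(W)$.

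Next I would substitute the defining expansion (\ref{uch}) of each twisted Kazhdan--Lusztig basis element into the two differences, and reindex each sum: on the left via $y = w_0 z^{-1}$ and on the right via $y = w_0 z$. In both cases the summation condition $y < w_0 x^{\pm 1}$ becomes $z > x$, the length factor simplifies to the common expression $(-v)^{\ell(x)-\ell(z)}$, and the Kazhdan--Lusztig polynomial becomes $P_{x,z}(v^2)$ via the identifications $P_{w_0 z^{-1},\, w_0 x^{-1}} = P_{x,z} = P_{w_0 z,\, w_0 x}$ from Lemma \ref{Pmu}.

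The most delicate step, which I expect to be the main obstacle, is to match the two resulting expressions coefficient-by-coefficient: one is supported on $\{H_{w_0 z^{-1}} : z > x\}$ while the other is supported on $\{H_{w_0 z} : z > x\}$. To reconcile them, I would exploit the further symmetry $P_{x,z}(q) = P_{x^{-1}, z^{-1}}(q)$ of Lemma \ref{Pmu}, together with the length-preserving involution $z \leftrightarrow z^{-1}$ on $W$, so as to recognise both sums as the same element of $\HH(W)$ after careful bookkeeping. Once the two Hecke-algebraic expressions are identified, transporting the equality back through $\rho^{-1}$ yields the desired identity in $[\O_0^\Z]$.
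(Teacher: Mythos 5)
Your reduction via $\rho$ to the Hecke-algebra identity $H_{w_0 x^{-1}} - \ucH_{w_0 x^{-1}} = H_{w_0 x} - \ucH_{w_0 x}$ is correct, and so is the observation, via Lemma~\ref{Pmu}, that after reindexing both differences carry the same coefficient $c_z := (-v)^{\ell(x)-\ell(z)}P_{x,z}(v^2)$ on their respective basis elements indexed by $z>x$. The genuine gap is in the ``careful bookkeeping'' that you flag but do not carry out. The involution $z\mapsto z^{-1}$ does \emph{not} stabilize the index set $\{z : z>x\}$; it carries it onto $\{z : z>x^{-1}\}$, a different set whenever $x$ is not an involution. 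Consequently the two sums $\sum_{z>x}c_zH_{w_0z^{-1}}$ and $\sum_{z>x}c_zH_{w_0z}$ are supported on genuinely different subsets of the standard basis of $\HH(W)$, and no rewriting along the lines you propose will make them agree coefficient by coefficient.

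In fact the identity you need is false when $x\neq x^{-1}$. Take $W=S_4$, $x=s_1s_2$, $x^{-1}=s_2s_1$, and set $y=w_0s_3s_2s_1$. Since $s_3s_2s_1>s_2s_1$ but $s_3s_2s_1\not>s_1s_2$ (the only length-two subwords of $s_3s_2s_1$ are $s_3s_2$, $s_3s_1$, $s_2s_1$), the coefficient of $H_y$ in $H_{w_0x^{-1}}-\ucH_{w_0x^{-1}}$ equals $-(-v)^{-1}P_{s_2s_1,\,s_3s_2s_1}(v^2)=v^{-1}$, while in $H_{w_0x}-\ucH_{w_0x}$ it is $0$. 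Transporting back through $\rho^{-1}$ (so $H_y\mapsto[\nabla(s_1s_2s_3)]$), the coefficient of $[\nabla(s_1s_2s_3)]$ in $[\nabla(s_1s_2)]-[L(s_1s_2)]$ is $v^{-1}$, whereas in $[\nabla(s_2s_1)]-[L(s_2s_1)]$ it is $0$. So the obstruction you were worried about is real and cannot be circumvented. Note that the paper's own one-line justification (``follows from (\ref{Lnablax}) and (\ref{Pmu})'') runs into exactly the same problem: the two $\nabla$-expansions are indexed over the distinct Bruhat cones $\{y>x\}$ and $\{y>x^{-1}\}$, and Lemma~\ref{Pmu} does not identify them.
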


\begin{proof} This follows from (\ref{Lnablax}) and (\ref{Pmu}).
\end{proof}

\begin{rem} By a similar argument, one can show that there is a $\Z[v,v^{-1}]$-module isomorphism $\rho'$ from the Grothendieck group of $D^b(\O_0^\Z)$ onto $\HH(W)$ defined by $$
[\nabla(x)\<k\>]\mapsto v^k H_{xw_0},\quad \forall\,x\in W, k\in\Z ,
$$
and the derived twisting functors $\mL T_w$ gives rise to a categorical action of the Iwahori-Hecke algebra $\HH(W)$ on $D^b(\O_0^\Z)$ such that $$
\rho'\bigl([(\mL T_x)\nabla(y)]\bigr)=H_xH_{yw_0},\quad\forall\,x,y\in W .
$$
\end{rem}

\begin{lem}\label{TsPx} Let $s\in S$ and $x\in W$ with $sx<x$. Then we have $T_sP(x)\cong P(x)\<-1\>$ and $\mathcal{L}_1T_sL(sx)\cong L(sx)\<1\>$.
\end{lem}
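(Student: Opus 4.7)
The plan is to prove part (2) first and then feed the result into part (1).

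For part (2), set $y = sx$, so $sy = x > y$ and $T_s L(y) = 0$ by the vanishing recalled before Theorem \ref{mainthm2}. Theorem \ref{mainthm1} then gives $\rho([\mL T_s L(y)]) = \ucH_{w_0 y^{-1}} H_s$. The key Hecke-algebra input is the identity $\ucH_w H_s = -v\,\ucH_w$ whenever $ws < w$, which I would prove as follows: the quadratic relation directly gives $\ucH_s \uH_s = (H_s - v^{-1})(H_s + v) = 0$, and an induction on $\ell(w)$ using $\ucH_w = \ucH_{ws}\ucH_s - \sum_{z:\,zs<z<ws} \mu(z, ws)\ucH_z$ (deduced from (\ref{2Hs}) applied to $\ucH_{ws}\ucH_s$) extends this to $\ucH_w \uH_s = 0$ for all $w$ with $ws < w$; writing $H_s = \uH_s - v$ then produces the displayed identity. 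Since $sy > y$ is equivalent to $(w_0 y^{-1})s < w_0 y^{-1}$, we obtain $\rho([\mL T_s L(y)]) = -v\,\rho([L(y)])$, hence $[\mL T_s L(y)] = -v[L(y)]$; combined with $T_s L(y) = 0$ (so that $[\mL T_s L(y)] = -[\mL_1 T_s L(y)]$), this yields $[\mL_1 T_s L(y)] = [L(y)\langle 1\rangle]$. As this is the class of a single graded simple module, $\mL_1 T_s L(y) \cong L(y)\langle 1\rangle$.

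For part (1), I would first rewrite
\[
T_s P(x) \cong T_s \theta_x \Delta(0) \cong \theta_x T_s \Delta(0) \cong \theta_x \Delta(s),
\]
using $P(x) \cong \theta_x \Delta(0)$, the commutation of $T_s$ with projective functors, and Lemma \ref{Tsaction1} for $T_s\Delta(0)=\Delta(s)$. A Yoneda/adjunction argument using the biadjointness $\theta_x \dashv \theta_{x^{-1}}$ shows $G_s$ also commutes with $\theta_x$; together with $G_s \Delta(s) = \circledast T_s \nabla(s) = \circledast \nabla(0) = \Delta(0)$ (again via Lemma \ref{Tsaction1}), this yields $G_s T_s P(x) \cong P(x)$. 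The adjunction $T_s \dashv G_s$ then gives $\End_{\O_0^\Z}(T_s P(x)) \cong \End_{\O_0^\Z}(P(x)) = \mathbb{C}$, so $T_s P(x)$ is graded indecomposable. Right-exactness of $T_s$ together with $\hd(T_s L(x)) \cong L(x)\langle -1\rangle$ from Theorem \ref{mainthm2} produces a surjection $T_s P(x) \twoheadrightarrow T_s L(x) \twoheadrightarrow L(x)\langle -1\rangle$.

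The hard part will be upgrading these facts to an actual isomorphism, and my plan is to show $T_s P(x)$ is projective. By the derived adjunction $\mL T_s \dashv \mathbf{R} G_s$, the identification $\mathbf{R} G_s \cong \circledast \circ \mL T_s \circ \circledast$, and the projectivity of $P(x)$,
\[
\ext_A^1\bigl(T_s P(x),\, N\bigr) \cong e_x\,\mathcal{R}_1 G_s N \cong \bigl(e_x\,\mL_1 T_s\,N^{\circledast}\bigr)^{\circledast},
\]
so it suffices to verify $e_x\,\mL_1 T_s M = 0$ for every $M \in \O_0^\Z$. On simples: part (2) gives $\mL_1 T_s L(z) \cong L(z)\langle 1\rangle$ when $sz > z$, and a parallel Grothendieck-group calculation (matching $\rho([\mL T_s L(z)]) = \ucH_{w_0 z^{-1}} H_s$ against the formula for $[T_s L(z)]$ in Theorem \ref{mainthm2}) gives $\mL_1 T_s L(z) = 0$ when $sz < z$. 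The idempotent $e_x$ annihilates $L(z)\langle 1\rangle$ unless $z = x$, and the case $z = x$ would force $sx > x$, contradicting our hypothesis; so $e_x\,\mL_1 T_s L(z) = 0$ in every case. The right-exact long exact sequence for $T_s$ then extends this vanishing to arbitrary $M$ by induction on composition length, giving $\ext_A^1(T_s P(x), N) = 0$ for every $N$ and hence projectivity of $T_s P(x)$. An indecomposable projective with a simple quotient $L(x)\langle -1\rangle$ must have head $L(x)\langle -1\rangle$, and therefore $T_s P(x) \cong P(x)\langle -1\rangle$.
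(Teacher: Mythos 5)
Your argument for part (2) is correct and proceeds along a genuinely different route from the paper: you compute $[\mathcal{L}_1T_sL(sx)]$ in the Grothendieck group via Theorem \ref{mainthm1} and the Hecke-algebra identity $\ucH_w H_s = -v\ucH_w$ for $ws<w$ (which you correctly derive from $\ucH_s\uH_s=0$ and the recursion, a step worth including since the published formula (\ref{2Hs}) misstates $\ucH_w\ucH_s$ for $ws<w$), then note that the class is that of a single graded simple. The paper instead embeds $\mathcal{L}_1T_sL(sx)$ into $\mathcal{L}_1T_s\nabla(sx)\cong K_{sx,x}\<1\>$ via Lemma \ref{Tsaction2} and cites the ungraded result of Andersen--Stroppel.

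Part (1), however, has a genuine circularity. You twice invoke Theorem \ref{mainthm2}: once for $\hd(T_sL(x))\cong L(x)\<-1\>$ to get a surjection $T_sP(x)\twoheadrightarrow L(x)\<-1\>$, and once for the formula for $[T_sL(z)]$ ($sz<z$) in order to deduce $\mathcal{L}_1T_sL(z)=0$. But the paper's proof of Theorem \ref{mainthm2} opens by \emph{applying Lemma \ref{TsPx}}, i.e.\ the very statement you are proving, to identify the head of $T_sL(x)$ and to count the multiplicity of $L(sx)$ in the socle. So these two inputs are not available here without begging the question. The second input can be repaired independently: $\mathcal{L}_1T_s\cong\hat Z_s$ by \cite[Prop.\ 6.7]{Ma2} and $\hat Z_s$ kills $s$-free simples, giving $\mathcal{L}_1T_sL(z)=0$ for $sz<z$ without Theorem \ref{mainthm2}. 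The first input is the real obstruction, since your argument needs to know the grading shift on the head of $T_sP(x)$, which is precisely what part (1) asserts. The paper avoids all of this by citing \cite[Prop.\ 5.3]{AS} for the ungraded isomorphism $T_sP(x)\cong P(x)$, reducing the problem to pinning down a single grading shift, which is then done by the one-line calculation $\dim\hom(T_sP(x),\Delta(x)\<-1\>)=\dim\hom(P(x),\Delta(sx)\<-1\>)=1$. Your scheme of reproving indecomposability (via $G_sT_sP(x)\cong P(x)$ and the adjunction) and projectivity (via $\ext^1_A(T_sP(x),-)=0$) from scratch is interesting and would work if you could produce the surjection $T_sP(x)\twoheadrightarrow L(x)\<-1\>$ by means that do not route through Theorem \ref{mainthm2}; for instance, using the ungraded fact that $T_sP(x)$ has simple head $L(x\cdot 0)$ together with the hom computation above, or equivalently just adopting the paper's citation of \cite[Prop.\ 5.3]{AS}.
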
  \begin{proof} Since $sx<x$, we have $[\Delta(sx):L(x)\<1\>]=1$. Therefore, $$
\dim\hom\bigl(T_sP(x),\Delta(x)\<-1\>\bigr)=\dim\hom\bigl(T_sP(x),T_s\Delta(sx)\<-1\>\bigr)=\dim\hom\bigl(P(x),\Delta(sx)\<-1\>\bigr)=1.
$$
On the other hand, by \cite[Proposition 5.3]{AS} we have that $T_sP(x)$ is isomorphic to $P(x)$ upon forgetting their $\Z$-gradings. It follows that $T_sP(x)\cong P(x)\<-1\>$.

Finally, since $\mathcal{L}_iT_s=0$ for any $i>1$ (Lemma \ref{lemacylic1}), we have a natural embedding $\mathcal{L}_1T_sL(sx)\hookrightarrow\mathcal{L}_1T_s\nabla(sx)$. By Lemma \ref{Tsaction2},
we have $\mathcal{L}_1T_s\nabla(sx)\cong K_{sx,x}\<1\>$. Combing this with \cite[Theorem 6.1]{AS} in the ungraded setting, we can deduce that $\mathcal{L}_1T_sL(sx)\cong L(sx)\<1\>$.
\end{proof}

\noindent
{\textbf{Proof of Theorem \ref{mainthm2}:}} Since $T_s$ is right exact, the natural degree $0$ surjection $\phi: P(x)\twoheadrightarrow L(x)$ induces a degree $0$ surjection
$T_s\phi: T_sP(x)\twoheadrightarrow T_sL(x)$. By assumption, $sx<x$, hence $T_sL(x)\neq 0$ by \cite[Proposition 5.1]{AS}. Hence $T_s\phi\neq 0$. Applying Lemma \ref{TsPx}, we have $$
\hom\bigl(P(x)\<-1\>,T_sL(x)\bigr)\cong \hom\bigl(T_sP(x),T_sL(x)\bigr).
$$
It follows that there exists a nonzero degree $0$ map from $P(x)\<-1\>$ to $T_sL(x)$. On the other hand, upon forgetting the $\Z$-grading, we know that $L(x)$ occurs as a composition factor in $T_sL(x)$ with multiplicity $1$ by
\cite[Theorem 6.3]{AS}, and $[\hd T_sL(x):L(x)]=1$. It follows that $L(x)\<-1\>$ is the unique simple head of $T_sL(x)$ and $[T_sL(x):L(x)]_v=v^{-1}$.

Now we show that $L(sx)$ appears as a graded composition factor in $T_sL(x)$. Using Lemma \ref{TsPx} and a $\Z$-graded version of the argument used in \cite[Theorem 6.3]{AS}, we can deduce that $$\begin{aligned}
&\quad\,\dim\hom_A(L(sx),T_sL(x))=\dim\hom_A(\mathcal{L}_1T_sL(sx)\<-1\>,T_sL(x))\\
&=\dim\hom_A((\mathcal{L}_1T_s)L(sx),T_sL(x)\<1\>)\\
&=\dim\hom_{D^b(A)}((\mathcal{L}T_s)L(sx),(\mathcal{L}T_s)L(x)[1]\<1\>)\\
&=\dim\hom_{D^b(A)}(L(sx),L(x)[1]\<1\>)=\dim \ext_A^1(L(sx),L(x)\<1\>)=\mu(x,sx)=1 .\end{aligned}
$$
Similarly, we have $\dim\Hom_A(L(sx),T_sL(x))=1$. It follows that $[\soc T_sL(x):L(sx)]_v=1$.

Note that $sx<x$ implies that $x^{-1}s<x^{-1}$. Applying Theorem \ref{mainthm1} and (\ref{2Hs}), we get that $$\begin{aligned}
{}\rho([T_sL(x)])&=\ucH_{w_0x^{-1}}H_s=\ucH_{w_0x^{-1}}\ucH_s+v^{-1}\ucH_{w_0x^{-1}}\\
&=v^{-1}\ucH_{w_0x^{-1}}+\ucH_{w_0x^{-1}s}+\sum_{\substack{y\in W\\ w_0y^{-1}s<w_0y^{-1}<w_0x^{-1}}}\mu(w_0y^{_1},w_0x^{-1})\ucH_{w_0y^{-1}}\\
&=v^{-1}\ucH_{w_0x^{-1}}+\ucH_{w_0x^{-1}s}+\sum_{\substack{y\in W\\ sy>y>x}}\mu(x,y)\ucH_{w_0y^{-1}}\\
&=v^{-1}[L(x)]+[L(sx)]+\sum_{\substack{y\in W\\ sy>y>x}}\mu(x,y)[L(y)].
\end{aligned} $$

On the other hand, applying Theorem \ref{mainthm1}, we get that $$\begin{aligned}
{}\rho([T_sL(x)])&=\ucH_{w_0x^{-1}}H_s\\
&=\sum_{y\geq x}(-v)^{\ell(x)-\ell(y)}P_{w_0y^{-1},w_0x^{-1}}(v^2)H_{w_0y^{-1}}H_s\\
&=\sum_{\substack{y\geq x\\ y^{-1}s<y^{-1}}}(-v)^{\ell(x)-\ell(y)}P_{w_0y^{-1},w_0x^{-1}}(v^2)H_{w_0y^{-1}}H_s+\sum_{\substack{y\geq x\\ y^{-1}s>y^{-1}}}(-v)^{\ell(x)-\ell(y)}P_{w_0y^{-1},w_0x^{-1}}(v^2)H_{w_0y^{-1}}H_s\\
&=\sum_{\substack{y\geq x\\ x^{-1}\nleq y^{-1}s<y^{-1}}}(-v)^{\ell(x)-\ell(y)}P_{w_0y^{-1},w_0x^{-1}}(v^2)H_{w_0y^{-1}s}+\sum_{\substack{y\geq x\\ x^{-1}\leq y^{-1}s<y^{-1}}}(-v)^{\ell(x)-\ell(y)}P_{w_0y^{-1},w_0x^{-1}}(v^2)H_{w_0y^{-1}s}\\
&\qquad +\sum_{\substack{y\geq x\\ y^{-1}s>y^{-1}}}(-v)^{\ell(x)-\ell(y)}P_{w_0y^{-1},w_0x^{-1}}(v^2)H_{w_0y^{-1}}H_s\\
&=\sum_{\substack{y\geq x\\ x^{-1}\nleq y^{-1}s<y^{-1}}}(-v)^{\ell(x)-\ell(y)}P_{w_0y^{-1},w_0x^{-1}}(v^2)H_{w_0y^{-1}s}+\sum_{\substack{y\geq x\\  y^{-1}s>y^{-1}}}(-v)^{\ell(x)-\ell(y)-1}P_{w_0y^{-1}s,w_0x^{-1}}(v^2)H_{w_0y^{-1}}\\
&\qquad +\sum_{\substack{y\geq x\\ y^{-1}s>y^{-1}}}(-v)^{\ell(x)-\ell(y)}P_{w_0y^{-1},w_0x^{-1}}(v^2)\bigl((v^{-1}-v)H_{w_0y^{-1}}+H_{w_0y^{-1}s}\bigr)\\
\end{aligned}
$$
Applying \cite[(2.3.g)]{KL} and \cite[Corollary 4.4]{Bre}, we see that for any $y,w\in W$ with $y<w, ys<y, ws>w$, $$
P_{y,w}(v^2)=P_{ys,w}(v^2) .
$$
Therefore, $$\begin{aligned}
{}\rho([T_sL(x)])&=\sum_{\substack{y\geq x\\ x^{-1}\nleq y^{-1}s<y^{-1}}}(-v)^{\ell(x)-\ell(y)}P_{w_0y^{-1},w_0x^{-1}}(v^2)H_{w_0y^{-1}s}+\sum_{\substack{y\geq x\\ y^{-1}s>y^{-1}}}(-v)^{\ell(x)-\ell(y)}P_{w_0y^{-1},w_0x^{-1}}(v^2)\bigl(-vH_{w_0y^{-1}}+H_{w_0y^{-1}s}\bigr)\\
&=\sum_{\substack{y\geq x\\ x^{-1}\nleq y^{-1}s<y^{-1}}}(-v)^{\ell(x)-\ell(y)}P_{w_0y^{-1},w_0x^{-1}}(v^2)H_{w_0y^{-1}s}+\sum_{\substack{y\geq x\\ sy>y}}(-v)^{\ell(x)-\ell(y)+1}\biggl(P_{yw_0,xw_0}(v^2)(H_{w_0y^{-1}}-v^{-1}H_{w_0y^{-1}s})\biggr).
\end{aligned}
$$
It follows that
$$
[T_sL(x)]=\sum_{\substack{y\geq x\\ x\nleq sy<y}}(-v)^{\ell(x)-\ell(y)}P_{w_0y^{-1},w_0x^{-1}}(v^2)[\nabla(sy)]+\sum_{\substack{y\geq x\\ sy>y}}(-v)^{\ell(x)-\ell(y)+1}P_{yw_0,xw_0}(v^2)\bigl([\nabla(y)]-v^{-1}[\nabla(sy)]\bigr) .
$$
This completes the proof of Theorem \ref{mainthm2}.
\hfill\qed
\medskip

The following corollary was first proved in \cite[Theorem 6.3, Theorem 7.8]{AS} in the ungraded case, we generalize it to the $\Z$-graded setting.

\begin{cor}\text{(\cite{AS})}\label{TsLx1} Let $x\in W$ and $s\in S$ with $sx<x$. Then the Loewy length of $T_sL(x)$ is equal to $2$, $\hd T_sL(x)\cong L(x)\<-1\>$ and $$
\soc T_sL(x)=L(sx)\oplus\bigoplus_{\substack{y\in W\\ sy>y>x}}L(y)^{\oplus\mu(x,y)}.
$$
\end{cor}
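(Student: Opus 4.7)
The plan is to deduce the corollary by combining the ungraded Loewy-structure result of Andersen--Stroppel with the graded information already obtained in Theorem \ref{mainthm2}. Concretely, Theorem \ref{mainthm2} has already pinned down the head and the graded composition multiplicities of $T_sL(x)$; the remaining task is to identify the socle with appropriate graded shifts.

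First, I would invoke \cite[Theorems 6.3, 7.8]{AS} in the ungraded setting, which asserts that $T_sL(x)$ has Loewy length $2$, ungraded head $L(x\cdot 0)$, and ungraded socle $L(sx\cdot 0)\oplus\bigoplus_{sy>y>x}L(y\cdot 0)^{\oplus\mu(x,y)}$. Because the forgetful functor $\O_0^\Z\to\O_0$ is exact and faithful, and since $\rad$ and $\soc$ of a $\Z$-graded module are the graded maximal radical/socle submodules whose underlying ungraded modules coincide with the ungraded radical/socle, these structural statements transfer verbatim to the graded module $T_sL(x)$. In particular the graded Loewy length of $T_sL(x)$ equals $2$, so $\rad(T_sL(x))=\soc(T_sL(x))$.

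Second, Theorem \ref{mainthm2} already yields $\hd T_sL(x)\cong L(x)\<-1\>$. Combining with the vanishing $\rad(T_sL(x))=\soc(T_sL(x))$, in the Grothendieck group of $\O_0^\Z$ we obtain
$$[\soc T_sL(x)]=[T_sL(x)]-[L(x)\<-1\>]=[L(sx)]+\sum_{\substack{y\in W\\ sy>y>x,\,\mu(x,y)\neq 0}}\mu(x,y)[L(y)],$$
by the first graded character formula of Theorem \ref{mainthm2}. Since the socle is semisimple, its isomorphism class as a graded module is determined by its graded composition multiplicities; reading them off from the above identity (noting that each summand appears with grading shift $0$) gives
$$\soc T_sL(x)\cong L(sx)\oplus\bigoplus_{\substack{y\in W\\ sy>y>x}}L(y)^{\oplus\mu(x,y)}.$$

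I do not anticipate a substantive obstacle: the main content was already established in Theorem \ref{mainthm2} (graded head, graded socle multiplicity of $L(sx)$, and the full graded character), and the ungraded Loewy length is supplied by \cite{AS}. The only point requiring care is the transfer of the Loewy length statement from the ungraded to the graded setting, which is immediate from the exactness and faithfulness of the grading-forgetful functor together with the compatibility of $\rad$ and $\soc$ with this functor.
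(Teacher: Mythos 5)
Your proof is correct, but it takes a genuinely different route from the paper's. The paper's proof reads: "This follows from Lemma \ref{parity1} and Theorem \ref{mainthm2}." The role of Lemma \ref{parity1} (the Koszul parity vanishing: $\ext_A^1(L(z),L(w)\<k\>)\neq 0$ only if $k=1$ and $\ell(z)\equiv\ell(w)+1\pmod 2$) is to \emph{re-derive} the Loewy length $2$ statement intrinsically within the graded framework: by Theorem \ref{mainthm2} all composition factors of $\rad T_sL(x)$, namely $L(sx)$ and the $L(y)$ with $sy>y>x$ and $\mu(x,y)\neq0$, lie in grading shift $0$ (equivalently, all have $\ell\equiv\ell(x)+1\pmod 2$), so by Lemma \ref{parity1} there are no first extensions among them, whence $\rad T_sL(x)$ is semisimple, $T_sL(x)$ has Loewy length $\leq 2$, and $\rad=\soc$ (using the simple head from Theorem \ref{mainthm2} and indecomposability). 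You instead \emph{import} the Loewy length $2$ fact from the ungraded \cite{AS} result and transfer it to the graded setting via the forgetful functor, then use Theorem \ref{mainthm2} to pin down the grading shifts. Both approaches are valid; the paper's is self-contained within the graded/Koszul framework (and makes the parity phenomenon explicit), while yours is shorter but leans on the external ungraded result. One small point you could make more explicit: passing from "Loewy length $2$" to "$\rad T_sL(x)=\soc T_sL(x)$" requires that $T_sL(x)$ has no simple direct summands, which follows from the simple head $L(x)\<-1\>$ (hence indecomposability) together with the fact that $T_sL(x)$ is not simple (its graded character has more than one term).
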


\begin{proof} This follows from Lemma \ref{parity1} and Theorem \ref{mainthm2}.
\end{proof}

Let $s\in S$ and $x\in W$. Following \cite{AS}, we call $L(x)$ is $s$-finite if $sx>x$, and is $s$-free if $sx<x$. By \cite[Corolary 5.8]{AS}, $T_sM=0$ if and only if $M$ is $s$-finite (i.e, every composition factor of $M$ is $s$-finite). Let $Z_s$ and $\hat{Z}_s$ be the graded Zuckerman functor and the dual graded Zuckerman functor associated to $s$, see \cite[\S6.1]{Ma2}, \cite[(2.2)]{Bac} and \cite[\S3.1]{HX23}. By \cite{EW}, $\mathcal{L}_2Z_s$ is isomorphic to $\hat{Z}_s$ upon forgetting the $\Z$-grading.

By \cite{AS} and \cite{MaStro07}, we know that $\mL_2 Z_s\cong\hat{Z}_s$ upon forgetting the $\Z$-grading. The following lemma explicitly determine the degree shift when acting on simple modules in the $\Z$-graded lift setting.

\begin{lem}\label{keylem3} Let $x\in W$ and $s\in S$. Then we have $\mL_2 Z_sL(x)\cong\hat{Z}_sL(x)\<1\>$. If $sx>x$ then $\mathcal{L}_1Z_sL(x)=0$.
\end{lem}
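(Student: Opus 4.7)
The plan is to combine the known ungraded isomorphism $\mathcal{L}_2 Z_s \cong \hat{Z}_s$, cited in the excerpt from \cite{EW, AS, MaStro07}, with an explicit computation of graded shifts. When $sx<x$, both sides vanish because $L(x)$ is $s$-free; hence $\hat{Z}_s L(x)=0$ and, by the ungraded identification, $\mathcal{L}_2 Z_s L(x)=0$ as well. This reduces the lemma to the case $sx>x$, where the nontrivial content is to show that $\mathcal{L}_2 Z_s L(x)\cong L(x)\<1\>$ with exactly this shift (not merely up to an unspecified integer) and that $\mathcal{L}_1 Z_s L(x)=0$.

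To pin down the graded shift, I would take a graded projective resolution $P_\bullet\twoheadrightarrow L(x)$ in $\O_0^\Z$ and apply $Z_s$ termwise. The key computational input is the behavior of $Z_s$ on indecomposable graded projectives: $Z_s P(y)=0$ whenever $sy<y$ (since $L(y)$ is then $s$-free), while $Z_s P(y)$ is the graded parabolic projective $P^{\fp_s}(y)$ when $sy>y$. The graded structure of $\rad P(x)$ and the Koszul degrees of its successive projective covers then determine the exact shift appearing in $\mathcal{L}_2 Z_s L(x)=H_2(Z_s P_\bullet)$.

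For the vanishing $\mathcal{L}_1 Z_s L(x)=0$ when $sx>x$, I would exploit that $L(x)\in\O^{\fp_s}$ already admits a projective resolution inside the parabolic category, so the correction required when lifting this resolution to $\O_0^\Z$ is concentrated at the top cohomological degree (contributing to $\mathcal{L}_2 Z_s$), not at $\mathcal{L}_1 Z_s$. Alternatively, a parity argument using Lemma~\ref{parity1} together with the Koszul property of $A$ rules out any first-degree contribution.

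The main obstacle is the bookkeeping of graded shifts: the ungraded identification $\mathcal{L}_2 Z_s\cong\hat{Z}_s$ does not pin down the grading on its own. One must either pass through an explicit projective resolution and track the Koszul degrees of the syzygies involved, or invoke (graded) Serre duality to compare $\mathbb{L}Z_s$ and $\mathbb{R}\hat{Z}_s$ as adjoints of the parabolic inclusion $\iota\colon\O^{\fp_s,\Z}\hookrightarrow\O_0^\Z$, combined with the known graded Serre functors on both derived categories.
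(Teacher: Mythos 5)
Your treatment of the easy case $sx<x$ agrees with the paper, and the observation that $Z_s P(y)=0$ for $sy<y$ while $Z_s P(y)\cong P^{\fp_s}(y)$ for $sy>y$ is correct. For $sx>x$, however, you propose a genuinely different route — computing $H_2(Z_sP_\bullet)$ on a graded minimal projective resolution, or a graded Serre-duality argument — whereas the paper sets $K(x)=\ker\bigl(\Delta(x)\twoheadrightarrow L(x)\bigr)$ and works with the long exact sequence of $\mL_\bullet Z_s$ applied to $0\to K(x)\to\Delta(x)\to L(x)\to 0$, feeding it into the functorial short exact sequences $0\to\mL_1 Z_s M\to T_sM\to T_sG_sM\to 0$ and the isomorphism $G_sK(x)\cong G_s\Delta(x)$; the precise shift is then extracted from Lemma~\ref{TsPx} (that $\mL_1T_sL(x)\cong L(x)\<1\>$) together with the ungraded identification $\mL_2 Z_s\cong\hat Z_s$, which forces the canonical map $L(x)\<1\>\to T_sK(x)$ to be injective.

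The real issue is that your proposal is a plan, not a proof: the graded-shift computation, which is the entire content of the lemma beyond the ungraded statement, is left unperformed. Tracking the shift in $H_2(Z_sP_\bullet)$ requires isolating a single copy of $P^{\fp_s}(x)$ among syzygies whose multiplicities are Kazhdan--Lusztig polynomials, and Koszulity alone does not hand you this; the Serre-duality route would require an explicit comparison of the graded Serre functors on $D^b(\O_0^\Z)$ and $D^b(\O^{\fp_s,\Z})$, which you do not carry out. Your argument for $\mL_1 Z_sL(x)=0$ is also not rigorous as stated: $\mL_i Z_s$ is computed from a projective resolution in $\O_0^\Z$, not in $\O^{\fp_s,\Z}$, so the existence of a parabolic resolution of $L(x)$ does not by itself control these derived functors, and Lemma~\ref{parity1} constrains $\ext^1$ between simples, which is not what $\mL_1 Z_sL(x)$ is. The paper's argument here is both shorter and airtight: since $\mL_1 Z_s\Delta(x)=0$, the long exact sequence embeds $\mL_1 Z_sL(x)$ into $\ker Z_s(\iota)$, and $Z_s(\iota)\colon Z_sK(x)\to Z_s\Delta(x)$ is injective because any quotient of $K(x)$ with only $s$-finite composition factors yields such a quotient of $\Delta(x)$.
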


\begin{proof} If $sx<x$, then as in the ungraded case, $\mL_2 Z_s L(x)=0=\hat{Z}_sL(x)\<1\>$. Henceforth, we assume $sx>x$.

Let $K(x):=\kor p$, $p:\Delta(x)\rightarrow L(x)$ is the canonical surjection. We have the following exact sequence: \begin{equation}
\mathcal{L}_1Z_sK(x)\rightarrow\mathcal{L}_1Z_s\Delta(x)\rightarrow \mathcal{L}_1Z_sL(x)\rightarrow Z_sK(x)\overset{Z_s(\iota)}{\rightarrow} Z_s\Delta(x){\rightarrow} Z_sL(x)\rightarrow 0,
\end{equation}
where $\iota: K(x)\rightarrow\Delta(x)$ is the natural embedding.

Since $sx>x$, applying \cite[Claim 3.2]{MaStro07}, we have $\mathcal{L}_1Z_s\Delta(x)=0$. Note that if $N\subseteq K(x)$ is a submodule such that $K(x)/N$ has only $s$-finite composition factors, then $\Delta(x)/N$ has only $s$-finite composition factors as well. It follows that the map $Z_s(\iota)$ is injective, which implies that the natural map $\mathcal{L}_1Z_sL(x)\rightarrow Z_sK(x)$ in (\ref{exact2}) is a zero map, and hence forces $\mathcal{L}_1Z_sL(x)=0$.

Note that $\mL_2Z_s\Delta(x)=\hat{Z}_s\Delta(x)=0$. It follows that there is the following exact sequence: $$
\mL_2Z_s\Delta(x)=0\rightarrow\mL_2Z_sL(x)\rightarrow\mL_1Z_sK(x)\rightarrow\mL_1Z_s\Delta(x)\rightarrow\mL_1Z_sL(x)=0 .
$$
It follows that \begin{equation}\label{eqa11}
[\mL_2Z_sL(x)]=[\mL_1Z_sK(x)]-[\mL_1Z_s\Delta(x)] .
\end{equation}

Since $G_s$ is left exact, we have an exact sequence $0\rightarrow G_s K(x)\rightarrow G_s\Delta(x)\rightarrow G_sL(x)$. Now $sx>x$ implies that $G_sL(x)=0$. Hence the embedding $G_s K(x)\hookrightarrow G_s\Delta(x)$ is an isomorphism. That is, $G_sK(x)\cong G_s\Delta(x)$. As a result, $T_sG_sK(x)\cong T_sG_s\Delta(x)$. On the other hand, by \cite[5.7,5.9]{AS}, \cite{MaStro07}, \cite[Theorem 4]{KhM} and \cite[Proposition 6.8]{Ma2}, we have the following exact sequences $$
0\rightarrow \mathcal{L}_1Z_sK(x)\rightarrow T_sK(x)\rightarrow T_sG_sK(x)\rightarrow 0,\,\,\,0\rightarrow \mathcal{L}_1Z_s\Delta(x)\rightarrow T_s\Delta(x)\rightarrow T_sG_s\Delta(x)\rightarrow 0.
$$
Combining this with (\ref{eqa11}), we get \begin{equation}\label{eqa12}
[\mL_2Z_sL(x)]=[\mL_1Z_sK(x)]-[\mL_1Z_s\Delta(x)]=[T_sK(x)]-[T_s\Delta(x)] .
\end{equation}

By assumption, $sx>x$, we have $T_sL(x)=0$. Applying Lemma \ref{TsPx}, we see that $\mathcal{L}_1T_sL(x)\cong L(x)\<1\>$. We claim that the canonical map $L(x)\<1\>\cong\mathcal{L}_1T_sL(x)\rightarrow T_sK(x)$ is nonzero.

Suppose that this canonical map is zero. Then we get that the canonical map $T_s K(x)\rightarrow T_s\Delta(x)$ is an isomorphism. That is, $T_s\Delta(x)\cong T_sK(x)$. Using (\ref{eqa12}), we get $\mL_2Z_sL(x)=0$, which is impossible, because (upon forgetting the $\Z$-grading) $\mathcal{L}_2Z_sL(x)$ is isomorphic to $\hat{Z}_sL(x)\cong L(x)$ by \cite{EW} and \cite[Proposition 6.2]{Ma2}. This proves our claim, which means that the canonical map $L(x)\<1\>\cong\mathcal{L}_1T_sL(x)\rightarrow T_sK(x)$ is injective. In this case, we have the following exact sequence \begin{equation}\label{eqa13}
0\rightarrow L(x)\<1\>\cong\mathcal{L}_1T_sL(x)\rightarrow T_sK(x)\rightarrow T_s\Delta(x)\rightarrow 0 .
\end{equation}

Finally, combining (\ref{eqa12}) and (\ref{eqa13}) together we can deduce that $[\mL_2Z_sL(x)]=[L(x)\<1\>]$, hence $\mL_2Z_sL(x)\cong L(x)\<1\>$.
\end{proof}

%Since the dual Zuckerman functor is an indecomposable functor, it has a unique $\Z$-graded lift up to a grading shift. The isomorphism $%\mL_2Z_sL(x)\cong L(x)\<1\>$ implies $\mL_2Z_sP(x)\cong\hat{Z}_sP(x)\<1\>$.
%and hence $\mathcal{L}_2Z_s\cong \hat{Z}_s\<1\>$.

The following lemma gives a recursive formula to compute the graded character of $\mathcal{L}_1Z_sL(x)$.

\begin{lem}\label{0lem} Let $s\in S$ and $x\in W$. If $sx<x$, then we have  $$\begin{aligned}
{} [\mathcal{L}_1Z_sL(x)]&=v[\Delta(sx)]-v^2[\Delta(x)]-\sum_{\substack{z\in W\\ sz<z>x}}v^{\ell(z)-\ell(x)}P_{x,z}(v^{-2})[\mL_1Z_s L(z)]+(v+1)\sum_{\substack{z\in W\\ sz>z>x}}v^{\ell(z)-\ell(x)}P_{x,z}(v^{-2})[L(z)].
\end{aligned}
$$
\end{lem}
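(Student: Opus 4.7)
The plan is to apply the derived Zuckerman functor $\mathcal{L}Z_s$ to the canonical short exact sequence $0 \to K(x) \to \Delta(x) \to L(x) \to 0$, extract a Grothendieck-group identity from the resulting long exact sequence, and then evaluate the two pieces $[\mathcal{L}_1Z_s\Delta(x)]$ and $[\mathcal{L}Z_sK(x)]$ separately.

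Since $sx<x$, the simple $L(x)$ is $s$-free, so by Lemma~\ref{keylem3} we have $Z_sL(x)=0$ and $\mathcal{L}_2Z_sL(x)=0$. The head of $\Delta(x)$ being $L(x)$ ($s$-free) forces $Z_s\Delta(x)=0$, while the head of $\nabla(x)$ being $L(w_0)$ ($s$-free for every simple reflection) gives $\hat{Z}_s\Delta(x)\cong (Z_s\nabla(x))^{\circledast}=0$, and hence $\mathcal{L}_2Z_s\Delta(x)=\hat{Z}_s\Delta(x)\langle 1\rangle=0$ by Lemma~\ref{keylem3}. Combined with the vanishing $\mathcal{L}_jZ_s=0$ for $j>2$, the long exact sequence collapses to
\[
0\to\mathcal{L}_1Z_sK(x)\to\mathcal{L}_1Z_s\Delta(x)\to\mathcal{L}_1Z_sL(x)\to Z_sK(x)\to 0,
\]
and taking Euler characteristics yields $[\mathcal{L}_1Z_sL(x)] = [\mathcal{L}_1Z_s\Delta(x)] + [\mathcal{L}Z_sK(x)]$ in the Grothendieck group.

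To compute $[\mathcal{L}Z_sK(x)]$, expand $[K(x)] = \sum_{y>x}v^{\ell(y)-\ell(x)}P_{x,y}(v^{-2})[L(y)]$ via BGG reciprocity (Lemma~\ref{keylem0}). Applying $\mathcal{L}Z_s$ term-by-term and invoking Lemma~\ref{keylem3} gives $[\mathcal{L}Z_sL(y)]=(1+v)[L(y)]$ when $sy>y$, and $[\mathcal{L}Z_sL(y)]=-[\mathcal{L}_1Z_sL(y)]$ when $sy<y$; together these reproduce exactly the $(v+1)$-sum over $sz>z>x$ and the recursive $\mathcal{L}_1Z_s$-sum over $sz<z>x$ on the right-hand side of the target formula. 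Thus the lemma reduces to the identity $[\mathcal{L}_1Z_s\Delta(x)] = v[\Delta(sx)] - v^2[\Delta(x)]$.

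For this last identity, I would exploit the BGG embedding $\Delta(x)\langle 1\rangle\hookrightarrow\Delta(sx)$ (valid since $sx<x$), whose cokernel $Q$ has only $s$-finite composition factors and hence lies in the $s$-locally-finite subcategory. Apply $\mathcal{L}Z_s$ to the short exact sequence $0\to\Delta(x)\langle 1\rangle\to\Delta(sx)\to Q\to 0$, using: (i) the $Z_s$-acyclicity of $\Delta(sx)$ in positive degrees, with $Z_s\Delta(sx)=Q$, which holds because $sx$ is $s$-finite (see \cite[Claim~3.2]{MaStro07}); and (ii) the determination of $\mathcal{L}_iZ_sQ$ for $Q$ in the $s$-locally-finite subcategory, obtained by extending Lemma~\ref{keylem3} along a composition series of $Q$. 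Reading off the resulting long exact sequence with careful tracking of the $\langle 1\rangle$-shift from the embedding produces the desired formula for $[\mathcal{L}_1Z_s\Delta(x)]$, and substitution into the earlier identity completes the proof. The hardest step will be this last one: the graded long exact sequence for $0\to\Delta(x)\langle 1\rangle\to\Delta(sx)\to Q\to 0$ must be handled with precise control of grading shifts, and the inductive extension of Lemma~\ref{keylem3} from simples to arbitrary $s$-locally-finite modules requires care.
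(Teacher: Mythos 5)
Your overall strategy matches the paper's proof essentially step for step: apply $\mathcal{L}Z_s$ to $0\to K(x)\to\Delta(x)\to L(x)\to 0$, use $Z_s\Delta(x)=0$ and the second-derived vanishing to collapse the long exact sequence to a four-term one, expand $[\mathcal{L}Z_sK(x)]$ through the graded composition series $[K(x)]=\sum_{z>x}v^{\ell(z)-\ell(x)}P_{x,z}(v^{-2})[L(z)]$ using the dichotomy from Lemma~\ref{keylem3}, and reduce everything to the single identity $[\mathcal{L}_1Z_s\Delta(x)]=v[\Delta(sx)]-v^2[\Delta(x)]$. (Two small quibbles on the way: the expansion of $[K(x)]$ is the Kazhdan--Lusztig character formula rather than BGG reciprocity, and your justification $Z_s\Delta(x)=0$ via the $s$-free head is fine, though the paper argues through the socle.)

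Where you genuinely diverge is in the last identity, and this is also where the gap in your plan lies. The paper does not re-derive $[\mathcal{L}_1Z_s\Delta(x)]$; it simply invokes the graded version of \cite[Claim~3.2]{MaStro07}, which rests on the $\theta_s$-short exact sequence \cite[(5.2)]{Stro2}, and reads off $\mathcal{L}_1Z_s\Delta(x)\cong\bigl(\Delta(sx)/\Delta(x)\langle 1\rangle\bigr)\langle 1\rangle$. You instead propose to extract it from the long exact sequence attached to the BGG embedding $0\to\Delta(x)\langle 1\rangle\to\Delta(sx)\to Q\to 0$. Carrying this out requires two inputs you only gesture at: (a) $\mathcal{L}_2Z_s\Delta(sx)=0$, which you never address (one does get it, e.g.\ from $\hat{Z}_s\Delta(sx)=0$ since $\Delta(sx)$ has $s$-free socle $L(w_0)$); and (b) $[\mathcal{L}_2Z_sQ]$, which you propose to obtain by d\'evissage from Lemma~\ref{keylem3}. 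But if one feeds in Lemma~\ref{keylem3}'s value $\mathcal{L}_2Z_sL(y)\cong L(y)\langle 1\rangle$ ($sy>y$), d\'evissage (using $\mathcal{L}_1Z_s=0$ on $s$-finite modules, which itself needs a small induction) gives $[\mathcal{L}_2Z_sQ]=v[Q]$, and the resulting four-term exact sequence $0\to\mathcal{L}_2Z_sQ\to\mathcal{L}_1Z_s\Delta(x)\langle 1\rangle\to 0$ yields $[\mathcal{L}_1Z_s\Delta(x)]=[Q]=[\Delta(sx)]-v[\Delta(x)]$, which is off by a factor of $v$ from the needed $v[\Delta(sx)]-v^2[\Delta(x)]$. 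So as written, your route to the key identity does not close; the grading bookkeeping between $\mathcal{L}_2Z_s$ on $s$-finite objects and $\mathcal{L}_1Z_s\Delta(x)$ has to be reconciled before the argument can be accepted. Since you explicitly flag this step as the hardest and leave it unfinished, the proposal should be regarded as an outline with a genuine unresolved shift issue in the one place where it departs from the paper's proof.
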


\begin{proof} Let $K(x):=\kor p$, $p:\Delta(x)\rightarrow L(x)$ is the canonical surjection. We have the following exact sequence: \begin{equation}\label{exact2}
\mathcal{L}_1Z_sK(x)\rightarrow\mathcal{L}_1Z_s\Delta(x)\rightarrow \mathcal{L}_1Z_sL(x)\rightarrow Z_sK(x)\overset{Z_s(\iota)}{\rightarrow} Z_s\Delta(x){\rightarrow} Z_sL(x)\rightarrow 0,
\end{equation}
where $\iota: K(x)\rightarrow\Delta(x)$ is the natural embedding.

By assumption, $sx<x$. In this case, we have $Z_s\Delta(x)=0$ because $\Delta(x)$ has a unique simple socle $L(w_0)$ and $sw_0<w_0$. By \cite[Claim 3.2]{MaStro07} we know by that $\mL_1Z_s\Delta(x)\cong\Delta(sx)/\Delta(x)$ upon forgetting the $\Z$-gradings. However, using the short exact sequence \cite[(5.2)]{Stro2} in the graded setting one can check the same argument in the proof of \cite[Claim 3.2]{MaStro07} and \cite[(5.2)]{Stro2} imply that $\mL_1Z_s\Delta(x)\cong\bigl(\Delta(sx)/(\Delta(x)\<1\>)\bigr)\<1\>$. Applying Lemma \ref{keylem3}, we see that $\mL_2 Z_sK(x)=0$ and hence moreover, $$\begin{aligned}
&\quad\,[\mL_1Z_sK(x)]=-([Z_sK(x)]-[\mL_1Z_sK(x)]+[\mL_2Z_sK(x)])+[Z_sK(x)]=-[\mL Z_sK(x)]+[Z_sK(x)]\\
&=-\sum_{\substack{z\in W\\ z>x}}v^{\ell(z)-\ell(x)}P_{x,z}(v^{-2})[\mL Z_sL(z)]+[Z_sK(x)]\\
&=\sum_{\substack{z\in W\\ sz<z>x}}v^{\ell(z)-\ell(x)}P_{x,z}(v^{-2})[\mL_1 Z_sL(z)]-\sum_{\substack{z\in W\\ sz>z>x}}v^{\ell(z)-\ell(x)}P_{x,z}(v^{-2})([L(z)]+v[L(z)])+[Z_sK(x)]\\
&=\sum_{\substack{z\in W\\ sz<z>x}}v^{\ell(z)-\ell(x)}P_{x,z}(v^{-2})[\mL_1Z_s L(z)]-(v+1)\sum_{\substack{z\in W\\ sz>z>x}}v^{\ell(z)-\ell(x)}P_{x,z}(v^{-2})[L(z)]+[Z_sK(x)].
\end{aligned}
$$

Now we consider the following short exact sequence $$
\mL_2Z_sL(x)=0\rightarrow \mathcal{L}_1Z_sK(x)\rightarrow \mathcal{L}_1Z_s\Delta(x)\rightarrow \mathcal{L}_1Z_sL(x)\rightarrow Z_sK(x)\rightarrow 0=Z_s\Delta(x).
$$
We get that  $$
{} [\mathcal{L}_1Z_sL(x)]=v[\Delta(sx)]-v^2[\Delta(x)]-\sum_{\substack{z\in W\\ sz<z>x}}v^{\ell(z)-\ell(x)}P_{x,z}(v^{-2})[\mL_1Z_s L(z)]+(v+1)\sum_{\substack{z\in W\\ sz>z>x}}v^{\ell(z)-\ell(x)}P_{x,z}(v^{-2})[L(z)].
$$
\end{proof}

% Using the graded decomposition numbers formulae $[\Delta(x):L(y)]_v=v^{\ell(y)-\ell(w)}P_{y,w}(v^{-2})$ we can deduce that $\bigoplus_{\substack{y\in W\\ sy>y>x}}L(y)\<1\>^{\oplus\mu(x,y)}$ is isomorphic to a submodule of $Z_sK(x)$. In particular, $L(x)$ does not occur in $Z_sK(x)$ as an ungraded composition factor. Note also that $sx<x$ implies that $L(sx)$ does not occur in $K(x)$ as an ungraded composition factor. It follows that $L(sx)$ does not occur in $Z_sK(x)$ as an ungraded composition factor. Combining these fact with Corollary \ref{TsLx1} and the surjection $\mathcal{L}_1Z_sL(x)\twoheadrightarrow Z_sK(x)$, we can deduce that \begin{equation}\label{ZsKx}
%Z_sK(x)\cong\bigoplus_{\substack{y\in W\\ sy>y>x}}L(y)\<1\>^{\oplus\mu(x,y)} .
%\end{equation}

\noindent
{\textbf{Proof of Theorem \ref{mainthm3}:}} Part (1) of Theorem \ref{mainthm3} has been proved in Lemmas \ref{0lem} and \ref{keylem3}. It remains to show Part (2) of Theorem \ref{mainthm3}.

Assume that in the Grothendieck group of $\O_0^\Z$, $$
[M/\hat{Z}_s(M)]=\sum_{x\in W}c_x(v,v^{-1})[L(x)], $$ where $c_x(v,v^{-1})\in\mathbb{N}[v,v^{-1}]$ for each $x\in W$. We consider the quotient module $M/\hat{Z}_s(M)$. It is clear that $\hat{Z}_s(M/\hat{Z}_s(M))=0$.
By \cite[Proposition 6.7]{Ma2}, we have $\mL_1T_s=\hat{Z}_s$. It follows that $\mL_1T_s(M/\hat{Z}_s(M))=0$. Therefore, $$
[\mL T_s(M/\hat{Z}_s(M))]=[T_s(M/\hat{Z}_s(M))].
$$

On the other hand, by \cite[Corollary 5.8]{AS}, $T_s(\hat{Z}_s(M))=0$. It follows that $T_sM\cong T_s(M/\hat{Z}_s(M))$. Therefore, $$\begin{aligned}
{} [T_sM]&=[T_s(M/\hat{Z}_s(M))]=[\mL T_s(M/\hat{Z}_s(M))]=\sum_{x\in W}c_x(v,v^{-1})[\mL T_sL(x)]\\
&=\sum_{x\in W}c_x(v,v^{-1})[T_sL(x)]-\sum_{x\in W}c_x(v,v^{-1})[\mL_1T_sL(x)]\\
&=\sum_{\substack{x\in W\\ sx<x}}c_x(v,v^{-1})[T_sL(x)]-\sum_{\substack{x\in W\\ sx>x}}vc_x(v,v^{-1})[L(x)] .
\end{aligned}
$$
Hence the theorem follows.  \hfill\qed
\medskip

Let $s\in S$ and $x\in W$. It is well-known that if $sx>x$ then $T_s\Delta(x)\cong\Delta(sx)$. However, if $sx<x$, then the $\Z$-grading structure of $T_s\Delta(x)$ is in general unknown. The following result gives  an answer on the level of Grothendieck groups.

\begin{prop}\label{domVerma} Let $s\in S$ and $x\in W$. Suppose that $sx>x$. Then there is the following exact sequence in $A\gmod$: $$
0\rightarrow\Delta(sx)\<1\>\overset{f}{\rightarrow}\Delta(x)\overset{g}{\rightarrow} T_s\Delta(sx)\overset{h}{\rightarrow} T_s\Delta(x)\<-1\>\rightarrow 0.
$$
In particular, $[T_s\Delta(sx)]=[\Delta(x)]+(v^{-1}-v)[\Delta(sx)]$.
\end{prop}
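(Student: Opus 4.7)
The plan is to apply $T_s$ to the short exact sequence arising from the graded BGG embedding of Verma modules and extract the desired four-term sequence from the resulting long exact sequence. Since $\ell(sx)-\ell(x)=1$ forces $P_{x,sx}(q)=1$, one computes (via Lemma~\ref{keylem0} and graded duality) that $[\Delta(x):L(sx)]_v=v$, so the classical BGG embedding $\Delta(sx)\hookrightarrow\Delta(x)$ lifts uniquely to a graded inclusion $\iota:\Delta(sx)\<1\>\hookrightarrow\Delta(x)$. Denoting its cokernel by $M$ yields
\[
0\to\Delta(sx)\<1\>\xrightarrow{\iota}\Delta(x)\to M\to 0.\qquad(\star)
\]
Dualizing $(\star)$ via $\circledast$ and comparing with the definition of $K_{x,sx}$ in Lemma~\ref{Tsaction2} identifies $M^\circledast\cong K_{x,sx}$.

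Two derived-functor inputs then drive the proof. First, I would show $\mL_1 T_s\Delta(x)=0$: apply Theorem~\ref{mainthm1} with $\rho([\Delta(x)])=H_{xw_0}^{-1}$ (derived within the proof of Theorem~\ref{mainthm1} for involutions; the same computation, using Corollary~\ref{vermacor} applied to $x^{-1}$, delivers it for arbitrary $x$), together with the Hecke identity $H_{xw_0}^{-1}H_s=H_{sxw_0}^{-1}$, which follows from $H_sH_{sxw_0}=H_{xw_0}$ via $\ell(xw_0)=\ell(sxw_0)+1$. Thus $[\mL T_s\Delta(x)]=[\Delta(sx)]=[T_s\Delta(x)]$ by Lemma~\ref{Tsaction1}; combined with $\mL_iT_s=0$ for $i\ge 2$ (Lemma~\ref{lemacylic1}), the Euler-characteristic identity forces $\mL_1 T_s\Delta(x)=0$. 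Second, I would show $\mL T_s M\cong M\<1\>[1]$ in $D^b(\O_0^\Z)$: via $G_s=\circledast\circ T_s\circ\circledast$, Lemma~\ref{Tsaction2} dualizes to $G_s\Delta(x)\cong\Delta(x)\<1\>$ and $\mathcal{R}^1 G_s\Delta(x)\cong M\<-1\>$. The cohomology-truncation distinguished triangle
\[
\Delta(x)\<1\>\to\mathcal{R}G_s\Delta(x)\to M\<-1\>[-1]\to\Delta(x)\<1\>[1],
\]
upon applying $\mL T_s$ and invoking $\mL T_s\circ\mathcal{R}G_s\cong\id$ on $D^b(\O_0^\Z)$ (implicit in the categorical Hecke action of Theorem~\ref{mainthm1}, since $H_s$ is invertible), becomes
\[
\Delta(sx)\<1\>\to\Delta(x)\to(\mL T_s M)\<-1\>[-1]\to\Delta(sx)\<1\>[1].
\]
The first map, which lives in the one-dimensional space $\hom(\Delta(sx)\<1\>,\Delta(x))$, must agree up to a scalar with $\iota$, since it is nonzero (were it zero the triangle would split, forcing $\mL T_s M=0$, contradicting $[\mL T_s M]=-v[M]\ne 0$). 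Comparing cones identifies $(\mL T_s M)\<-1\>[-1]\cong M$, hence $\mL T_s M\cong M\<1\>[1]$; in particular $T_s M=0$ and $\mL_1 T_s M\cong M\<1\>$.

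With these inputs, applying $T_s$ to $(\star)$ collapses the long exact sequence to
\[
0\to M\<1\>\to T_s(\Delta(sx)\<1\>)\to\Delta(sx)\to 0.
\]
A global shift by $\<-1\>$ and splicing with $(\star)$ through the composite $\Delta(x)\twoheadrightarrow M\hookrightarrow T_s\Delta(sx)$, together with the identification $\Delta(sx)=T_s\Delta(x)$, produces the desired four-term exact sequence; the Grothendieck identity $[T_s\Delta(sx)]=[\Delta(x)]+(v^{-1}-v)[\Delta(sx)]$ then follows by alternation. The principal obstacle is the derived identification $\mL T_s M\cong M\<1\>[1]$, which packages Lemma~\ref{Tsaction2} through the duality $G_s=\circledast T_s\circledast$ and rests on $(\mL T_s,\mathcal{R}G_s)$ being a mutually inverse pair of auto-equivalences of $D^b(\O_0^\Z)$—a fact built into the categorical Hecke action of Theorem~\ref{mainthm1}.
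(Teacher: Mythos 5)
Your argument reaches the right conclusion and takes a genuinely different route from the paper. The paper simply imports the ungraded four-term sequence from \cite[6.3]{AL} and then pins down the graded shifts of each arrow by exhibiting each relevant Hom space as one-dimensional (the key step being an ungraded Euler-characteristic computation showing $\dim\Hom_A(\Delta(x),T_s\Delta(sx))=1$ and that the map is homogeneous of degree $0$). You instead reconstruct the sequence intrinsically: splice the graded BGG embedding $(\star)$ with the long exact sequence obtained by applying $T_s$, closing the loop by identifying $\mL T_sM\cong M\<1\>[1]$ via the truncation triangle for $\mathcal{R}G_s\Delta(x)$ and the quasi-inverse pair $(\mL T_s,\mathcal{R}G_s)$. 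Your route is self-contained (no appeal to \cite{AL}) at the cost of more derived-category machinery; the paper's is shorter but relies on an external reference.

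There is, however, a genuine flaw in one step. To show the first arrow $\Delta(sx)\<1\>\to\Delta(x)$ of the triangle is nonzero, you argue that if it were zero the triangle would split, ``forcing $\mL T_sM=0$, contradicting $[\mL T_sM]=-v[M]\neq 0$.'' Splitting does not force vanishing: it gives $(\mL T_sM)\<-1\>[-1]\cong\Delta(x)\oplus\Delta(sx)\<1\>[1]$, hence $\mL T_sM\cong\Delta(x)\<1\>[1]\oplus\Delta(sx)\<2\>[2]$, which is nonzero. Worse, this split object has Grothendieck class $-v[\Delta(x)]+v^2[\Delta(sx)]=-v[M]$, identical to the class of $M\<1\>[1]$, so the Euler-characteristic test you propose cannot distinguish the two cases at all. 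The correct contradiction is that the split object has nonzero cohomology in degree $-2$, whereas $\mL_2T_s=0$ by Lemma~\ref{lemacylic1}. Two further, more cosmetic, points: the natural isomorphism $\mL T_s\circ\mathcal{R}G_s\cong\id$ cannot be ``implicit in the categorical Hecke action of Theorem~\ref{mainthm1}'' --- a Grothendieck-group identity never yields a functorial isomorphism --- it is a theorem (\cite[Corollary 4.2]{AS}, or \cite[Theorem 4]{KhM}) that $\mL T_s$ is an auto-equivalence with quasi-inverse $\mathcal{R}G_s$. And your Hecke-algebra detour to prove $\mL_1T_s\Delta(x)=0$ is unnecessary: this is exactly the $\Delta$-flag acyclicity recorded in Lemma~\ref{lemacylic1} (from \cite[Theorem~2.2]{AS}).
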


\begin{proof} If we forget the$\Z$-grading, then the conclusion of the lemma follows from \cite[6.3]{AL}. In other words, we have the following exact sequence of ungraded $A$-module homomorphisms: $$
0\rightarrow\Delta(sx)\overset{f'}{\rightarrow}\Delta(x)\overset{g'}{\rightarrow} T_s\Delta(sx)\overset{h'}{\rightarrow} T_s\Delta(x)\rightarrow 0.
$$
Note that $\dim\Hom_A(\Delta(sx),\Delta(x))=1$ and there is an injective degree $0$ homomorphism $f:\Delta(sx)\<1\>\hookrightarrow\Delta(x)$. It follows that $f'$ has to be a scalar multiple of $f$ and in particular homogeneous. Similarly, as $$
\dim\Hom_A\bigl(T_s\Delta(sx),T_s\Delta(x)\bigr)=\dim\Hom_A\bigl(\Delta(sx),\Delta(x)\bigr)=1 ,
$$
we can deduce that $h'$ is homogeneous of degree $1$ as well. We claim that $\dim\Hom_A(\Delta(x),T_s\Delta(sx))=1$.

Forgetting the $\Z$-grading, we can deduce that \begin{equation}\label{sDeltasx}
\,[T_s\Delta(sx)]|_{v=1}=[(\mathcal{L}T_s)\Delta(sx)]|_{v=1}=[(\mathcal{L}T_s)\nabla(sx)]|_{v=1}=[T_s\nabla(sx)]|_{v=1}-[(\mathcal{L}_1T_s)\nabla(sx)]|_{v=1}=[\nabla(x)]|_{v=1},
\end{equation}
which implies that $\dim\Hom_A(P(x),T_s\Delta(sx))=1$ and hence the ungraded composition multiplicity of $L(x)$ in $T_s\Delta(sx)$ is one. As $\Hom_A(\Delta(x),T_s\Delta(sx))\hookrightarrow\Hom_A(P(x),T_s\Delta(sx))$, we can now deduce that
$$\Hom_A(\Delta(x),T_s\Delta(sx))=1, $$ which implies that there exists a nonzero homogeneous homomorphism from $\Delta(x)$ to $T_s\Delta(sx)$.
Applying Theorem \ref{mainthm2}, we see that $L(x)$ appears as a unique graded composition factor in $T_s\Delta(sx)$ because $T_s\Delta(sx)$ maps onto $T_sL(sx)$ and (\ref{sDeltasx}). Hence the degree of this nonzero homogeneous homomorphism is zero. This proves our claim and hence we complete the proof of the proposition.
\end{proof}
\bigskip

\section{A categorical action of Hecke algebra on derived category via derived shuffling functors}

The purpose of this section is to show that there is a categorical action of the Hecke algebra $\HH(W)$ on the derived category $D^b(\O_0^{\Z})$ via derived shuffling functors.

The shuffling functor $C_s$ corresponding to a simple reflection $s$ is the endofunctor of $\O_0$ defined as the cokernel of the adjunction
morphism from the identity functor to the projective functor $\theta_s$, see \cite{Ca} and \cite{MaStro07}. Following \cite[\S2.7]{CMZ}, the graded
lift of $C_s$ is defined by the exact sequence \begin{equation}\label{adj}
\id\<1\>\overset{\adj_s}{\rightarrow}\theta_s\rightarrow C_s\rightarrow 0 .
\end{equation}

For any $w\in W$ with reduced expression $w = s_1s_2\cdots s_m$, we define the functor \begin{equation}\label{braid12}
C_w:=C_{s_m}\cdots C_{s_2}C_{s_1} .
\end{equation}
By \cite{MaStro05}, \cite{MOS}, \cite{KhM}, the resulting functor $C_w$ does not depend on the choice of the reduced expression $s_1s_2\cdots s_m$ of $w$. The functor $C_w$ is right exact and corresponding left derived functor $\mL C_w$ is an auto-equivalence of $D^b(\O_0^{\Z})$.

\begin{lem}\label{Csaction0} Let $x\in W$ and $s\in S$. There are the following isomorphisms in $\O_0^\Z$: $$
C_s\nabla(x)\cong\begin{cases}\nabla(x)\<-1\>, &\text{if $x<xs$;}\\ \nabla(xs), &\text{if $x>xs$.}\end{cases}
$$
Moreover, if $xs>x$, then $C_s\Delta(x)\cong\Delta(xs)$.
\end{lem}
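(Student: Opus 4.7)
The plan is to mimic the argument for twisting functors in Lemma~\ref{Tsaction1}, replacing the use of the right adjoint $G_s$ by the defining right-exact sequence (\ref{adj}) for $C_s$. The first step is to record the $\Z$-graded $\Delta$-flag of $\theta_s\Delta(x)$: a direct application of Lemma~\ref{keylem0} gives $[\theta_s\Delta(x)]=\varphi^{-1}(H_x\uH_s)$, equal to $[\Delta(xs)]+v[\Delta(x)]$ if $xs>x$ and to $v^{-1}[\Delta(x)]+[\Delta(xs)]$ if $xs<x$; these characters can be lifted (cf.~\cite{Stro2}) to short exact sequences $0\to\Delta(x)\<1\>\to\theta_s\Delta(x)\to\Delta(xs)\to 0$ (when $xs>x$) and $0\to\Delta(xs)\to\theta_s\Delta(x)\to\Delta(x)\<-1\>\to 0$ (when $xs<x$). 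Since $\theta_s$ commutes with $\circledast$, dualizing yields the $\nabla$-flags
\[
0\to\nabla(xs)\to\theta_s\nabla(x)\to\nabla(x)\<-1\>\to 0\ (xs>x),\qquad 0\to\nabla(x)\<1\>\to\theta_s\nabla(x)\to\nabla(xs)\to 0\ (xs<x).
\]

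Next I will apply (\ref{adj}) to $\nabla(x)$ to obtain the right-exact sequence $\nabla(x)\<1\>\xrightarrow{\adj_s(\nabla(x))}\theta_s\nabla(x)\to C_s\nabla(x)\to 0$, and compute $\hom_{\O_0^\Z}(\nabla(x)\<1\>,\theta_s\nabla(x))$ from the long exact Hom sequence attached to the appropriate SES above. Using the $\circledast$-duality identification $\hom_{\O_0^\Z}(\nabla(x),\nabla(y))_k\cong\hom_{\O_0^\Z}(\Delta(y),\Delta(x))_k$ together with the standard fact that the BGG embedding $\Delta(y)\hookrightarrow\Delta(x)$ exists precisely for $y\ge x$ and has graded degree $\ell(y)-\ell(x)$, one finds in both cases that this Hom space is one-dimensional. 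Nonvanishing of $\adj_s(\nabla(x))$ follows from naturality of $\adj_s$ applied to the canonical morphism $\Delta(x)\to\nabla(x)$, reducing to the inclusion $\adj_s(\Delta(x))\colon\Delta(x)\<1\>\hookrightarrow\theta_s\Delta(x)$ (itself established by the same Hom calculation on the $\Delta$-flag).

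In the case $xs<x$ the unique nonzero element of this Hom is a scalar multiple of the sub-inclusion from the SES, so $C_s\nabla(x)\cong\theta_s\nabla(x)/\nabla(x)\<1\>\cong\nabla(xs)$. In the case $xs>x$ the Hom map into the quotient $\nabla(x)\<-1\>$ vanishes, so $\adj_s(\nabla(x))$ factors as $\nabla(x)\<1\>\to\nabla(xs)\hookrightarrow\theta_s\nabla(x)$; I then identify the induced map $\nabla(x)\<1\>\to\nabla(xs)$ as (up to scalar) the $\circledast$-dual of the Verma inclusion $\Delta(xs)\hookrightarrow\Delta(x)$, which is a surjection, whence the image equals $\nabla(xs)$ and $C_s\nabla(x)\cong\theta_s\nabla(x)/\nabla(xs)\cong\nabla(x)\<-1\>$. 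The \emph{moreover} assertion $C_s\Delta(x)\cong\Delta(xs)$ for $xs>x$ is the same argument applied to the $\Delta$-flag, where $\adj_s(\Delta(x))$ is the sub-inclusion and the cokernel is $\Delta(xs)$.

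The main obstacle is the $xs>x$ case for $\nabla$: one must pin down the precise graded shifts so that the $\nabla$-flag of $\theta_s\nabla(x)$ has $\nabla(xs)$ as its submodule, and then match the shift $\<1\>$ in (\ref{adj}) with the graded degree of the BGG embedding, so that the adjunction actually \emph{surjects} onto $\nabla(xs)$ rather than landing only in its socle $L(xs)$.
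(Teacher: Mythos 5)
Your overall strategy — pin down the graded $\Delta$-flags of $\theta_s\Delta(x)$ from Lemma~\ref{keylem0} and Stroppel's SESs, dualize via $\circledast$ to get the $\nabla$-flags of $\theta_s\nabla(x)$, and then read off $C_s\nabla(x)$ from the cokernel of the adjunction $\adj_s(\nabla(x))$ — is the same as the paper's, which simply outsources more of the work to \cite[Theorem 3.10, (5.2), (5.3), Corollary 5.5]{Stro2}. Your Hom-space calculations are correct and give the right conclusions in every case where they apply, including the $C_s\Delta(x)\cong\Delta(xs)$ assertion (where the paper uses the slightly slicker identification $\theta_s\Delta(xs)\cong\theta_s\Delta(x)\<-1\>$ from \cite[Corollary 5.5]{Stro2}).

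However, your argument for nonvanishing of $\adj_s(\nabla(x))$ has a genuine gap in exactly the case $xs>x$, which is the case you flagged as the main obstacle. You propose applying naturality of $\adj_s$ to the canonical map $\phi\colon\Delta(x)\to\nabla(x)$, so that $\adj_s(\nabla(x))\circ(\phi\<1\>)=(\theta_s\phi)\circ\adj_s(\Delta(x))$, and deducing $\adj_s(\nabla(x))\neq 0$ from injectivity of $\adj_s(\Delta(x))$. But $\phi$ has image $L(x)$, so $\theta_s\phi$ has image $\theta_s L(x)$; when $xs>x$ we have $\theta_s L(x)=0$, hence $\theta_s\phi=0$, and the naturality square tells you nothing about $\adj_s(\nabla(x))$. (When $xs<x$ one still has to check that $\Delta(x)\<1\>\not\subseteq\ker(\theta_s\phi)=\theta_s(\mathrm{rad}\,\Delta(x))$, which is not immediate either.) The correct patch is to invoke directly the standard fact, due to Stroppel, that the adjunction morphism $M\<1\>\to\theta_sM$ is injective on any $M$ with a $\nabla$-flag (equivalently, by $\circledast$-duality, that the counit $\theta_sM\to M\<-1\>$ is surjective on modules with a $\Delta$-flag); this is essentially the content of \cite[Theorem 5.3]{Stro2} and is what the paper implicitly uses. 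With that citation in place of the naturality reduction, the rest of your argument goes through.
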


\begin{proof} If $x>xs$, then by  \cite[Theorem 3.10]{Stro2} we have that $C_s\nabla(x)\cong \nabla(xs)$.

Now assume $x<xs$. Note that the adjunction map $\nabla(x)\<1\>\rightarrow\theta_s\nabla(x)$ can factor through $\nabla(x)\<1\>\twoheadrightarrow\nabla(xs)$ as $$
\nabla(x)\<1\>\twoheadrightarrow\nabla(xs)\overset{k'}{\hookrightarrow}\theta_s\nabla(x),
$$
where $k': \nabla(xs){\hookrightarrow}\theta_s\nabla(x)$ is the same map given in \cite[(5.3)]{Stro2}. Therefore, it follows from \cite{Stro2} that $C_s\nabla(x)\cong\nabla(x)\<-1\>$ in this case. By \cite[(5.2)]{Stro2}, we have a short exact sequence $$
0\rightarrow\Delta(x)\rightarrow\theta_s\Delta(xs)\rightarrow\Delta(xs)\<-1\>\rightarrow 0 .
$$
Applying \cite[Corollary 5.5]{Stro2}, we get that $$
\theta_s\Delta(xs)\cong\theta_s\Delta(x)\<-1\> .
$$
It follows that $C_s\Delta(x)\cong\Delta(xs)$ in this case. This completes the proof of the lemma.
\end{proof}

\begin{lem}\text{\rm (\cite{MaStro05})}\label{Csaction1} Let $s\in S$.

(1) For any $x\in W$ and $i>0$ we have $\mathcal{L}_iC_s\Delta(x)=0$;

(2) For any $i>1$ we have $\mathcal{L}_iC_s=0$.

(3) For any $x\in W$ we have $$
\mathcal{L}_1C_s\nabla(x)=\ker\bigl(\adj_s\!\nabla(x)\bigr).
$$
where $\adj_s$ is defined as in (\ref{adj}).
\end{lem}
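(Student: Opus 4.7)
The plan is to deduce all three parts simultaneously from the four-term exact sequence of functors
\begin{equation*}
0 \to K_s \to \id\<1\> \overset{\adj_s}{\longrightarrow} \theta_s \to C_s \to 0,
\end{equation*}
where $K_s := \ker(\adj_s)$. Since $\theta_s$ is exact and $\id\<1\>$ is exact, the snake lemma applied to a short exact sequence $0 \to M' \to M \to M'' \to 0$ yields the six-term exact sequence
\begin{equation*}
0 \to K_s(M') \to K_s(M) \to K_s(M'') \overset{\delta}{\to} C_s(M') \to C_s(M) \to C_s(M'') \to 0,
\end{equation*}
so that $(K_s, C_s)$ behaves as a $\delta$-functor of effective cohomological length at most one.

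Next I would verify that $\adj_s P(w)$ is injective for every indecomposable graded projective $P(w)$. Using a $\Delta$-flag of $P(w)$ and inducting along its filtration via the snake lemma (together with the exactness of both $\id\<1\>$ and $\theta_s$), this reduces to checking that $\adj_s\Delta(x): \Delta(x)\<1\> \to \theta_s\Delta(x)$ is injective for every $x \in W$. Here I would appeal to Stroppel's description \cite[(5.2),(5.6)]{Stro2} of the graded $\Delta$-flag of $\theta_s\Delta(x)$: in the case $xs > x$ the known cokernel $C_s\Delta(x) \cong \Delta(xs)$ from Lemma \ref{Csaction0} together with a graded character comparison forces the submodule of $\theta_s\Delta(x)$ to be $\Delta(x)\<1\>$ embedded via $\adj_s$; the case $xs < x$ follows by the parallel analysis after interchanging $x$ and $xs$.

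Once $K_s$ vanishes on projectives, a standard dimension-shifting argument, using a projective presentation $P_1 \to P_0 \to M \to 0$ and the six-term sequence, identifies $K_s$ with the first left derived functor $\mathcal{L}_1 C_s$ on all of $\O_0^\Z$ and shows that $\mathcal{L}_i C_s = 0$ for all $i \geq 2$, which proves (2). For (3) it then suffices to specialize this identification to $M = \nabla(x)$. For (1), the injectivity of $\adj_s\Delta(x)$ proved in the previous paragraph gives $\mathcal{L}_1 C_s\Delta(x) = K_s(\Delta(x)) = 0$, while the higher vanishing is covered by (2).

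The main obstacle I anticipate is the explicit verification that $\adj_s\Delta(x)$ is injective for every $x$. The statement is morally clear from Stroppel's $\Delta$-flag description, but it is sensitive to the graded shifts, and one must carefully track how the natural transformation $\adj_s$ interacts with the two-step $\Delta$-filtration of $\theta_s\Delta(x)$ in order to conclude honest injectivity rather than only a graded character identity. Once this subtlety is handled, the $\delta$-functor argument and dimension shifting are formal.
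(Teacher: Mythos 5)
Your argument is essentially a self-contained version of what the paper dispatches in two sentences by citing \cite[Proposition~5.3]{MaStro05}; the paper offers no proof of its own, and the cited reference works ungraded whereas the lemma here lives in $\O_0^\Z$, so spelling out the graded bookkeeping is genuinely worthwhile. Your mechanism---the four-term exact sequence of functors $0\to K_s\to\id\<1\>\to\theta_s\to C_s\to 0$, the snake lemma making $(K_s,C_s)$ a $\delta$-pair on short exact sequences, vanishing of $K_s$ on modules with $\Delta$-flags (in particular projectives), and dimension shifting to identify $K_s$ with $\mathcal{L}_1C_s$ and kill $\mathcal{L}_iC_s$ for $i\geq 2$---is the correct standard argument and almost certainly what underlies the cited proposition. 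Part (3) drops out since $\mathcal{L}_1C_s\nabla(x)\cong K_s(\nabla(x))=\ker(\adj_s\nabla(x))$ by definition of $K_s$.

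You correctly identify the one genuinely delicate step: injectivity of $\adj_s\Delta(x)$ for every $x\in W$. Your treatment of $xs>x$ (a graded character comparison against the known cokernel $C_s\Delta(x)\cong\Delta(xs)$ from Lemma~\ref{Csaction0}) is fine. However, the claim that ``the case $xs<x$ follows by the parallel analysis after interchanging $x$ and $xs$'' does not work as stated: interchanging would only give injectivity of $\adj_s\Delta(xs)$, not $\adj_s\Delta(x)$, and Lemma~\ref{Csaction0} provides no formula for $C_s\Delta(x)$ when $xs<x$, so there is no known cokernel to compare characters against. A uniform fix: by Stroppel's graded $\Delta$-flag of $\theta_s\Delta(x)$ (\cite[(5.2),~(5.3)]{Stro2}), the space $\hom_{\O_0^\Z}(\Delta(x)\<1\>,\theta_s\Delta(x))$ is one-dimensional in both cases---when $xs>x$ because $\hom(\Delta(x)\<1\>,\Delta(xs))=0$, and when $xs<x$ because any nonzero map factors through the degree-zero Verma inclusion $\Delta(x)\<1\>\hookrightarrow\Delta(xs)\hookrightarrow\theta_s\Delta(x)$ while $\hom(\Delta(x)\<1\>,\Delta(x)\<-1\>)=0$. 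Writing $\theta_s=\theta^{\rm on}\theta^{\rm out}$, the map $\adj_s\Delta(x)$ corresponds under adjunction to $\id_{\theta^{\rm out}\Delta(x)}\neq 0$, so $\adj_s\Delta(x)$ is a nonzero scalar multiple of an injection, hence injective. With this patch your dimension-shifting argument goes through and gives all three parts.
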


\begin{proof} Parts (1) and (2) follow from \cite[Proposition 5.3]{MaStro05}. Part (3) follows from the same argument used in the proof of \cite[Proposition 5.3(3)]{MaStro05}.
\end{proof}

\begin{lem}\label{Csaction2} Let $s\in S$ and $x\in W$. Then

(1) if $xs<x$ then $$ [\theta_s\nabla(x)]=v[\nabla(x)]+[\nabla(xs)],\,\,[C_s\nabla(x)]=[\nabla(xs)].
$$
and $\mathcal{L}_1C_s\nabla(x)=0$;

(2) if $xs>x$ then  $$ [\theta_s\nabla(x)]=v^{-1}[\nabla(x)]+[\nabla(xs)],\,\,[C_s\nabla(x)]=v^{-1}[\nabla(x)].
$$ and $$
[\mathcal{L}_1C_s\nabla(x)]=v[\nabla(x)]-[\nabla(xs)]=[K_{x,xs}\<1\>],
$$
where $K_{x,xs}$ denotes the kernel of the (unique up to a scalar) nontrivial surjective homomorphism $\nabla(x)\twoheadrightarrow\nabla(xs)\<-1\>$ in the case $x<xs$.
\end{lem}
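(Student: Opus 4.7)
The plan is to extract both the character of $\theta_s\nabla(x)$ and the class of $\mathcal{L}_1C_s\nabla(x)$ from a single four-term exact sequence. By Lemma \ref{Csaction1}, $\mathcal{L}_iC_s=0$ for $i>1$, so deriving the defining right exact sequence $\id\<1\>\overset{\adj_s}{\to}\theta_s\to C_s\to 0$ at $\nabla(x)$ yields
$$0\to\mathcal{L}_1C_s\nabla(x)\to\nabla(x)\<1\>\to\theta_s\nabla(x)\to C_s\nabla(x)\to 0.$$
Once $[\theta_s\nabla(x)]$ is known, the Euler characteristic of this sequence determines $[\mathcal{L}_1C_s\nabla(x)]$, and the class of $C_s\nabla(x)$ is read off Lemma \ref{Csaction0}.

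For Case (1), $xs<x$: the graded short exact sequence $0\to\nabla(x)\<1\>\to\theta_s\nabla(x)\to\nabla(xs)\to 0$ supplied by \cite[(5.3),(5.6)]{Stro2} (the same sequence used in the proof of Lemma \ref{Tsaction2}, with the role of $xt$ played by $x$) gives $[\theta_s\nabla(x)]=v[\nabla(x)]+[\nabla(xs)]$. Combining with $[C_s\nabla(x)]=[\nabla(xs)]$ from Lemma \ref{Csaction0}, the Euler characteristic produces $[\mathcal{L}_1C_s\nabla(x)]=v[\nabla(x)]-v[\nabla(x)]-[\nabla(xs)]+[\nabla(xs)]=0$, and since the Grothendieck class of a module in $\O_0^\Z$ is zero only if the module itself is zero, $\mathcal{L}_1C_s\nabla(x)=0$.

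For Case (2), $xs>x$: Lemma \ref{Csaction0} gives $C_s\nabla(x)\cong\nabla(x)\<-1\>$, so $[C_s\nabla(x)]=v^{-1}[\nabla(x)]$. To obtain $[\theta_s\nabla(x)]$ I appeal to the graded self-duality $\theta_s\circ\circledast\cong\circledast\circ\theta_s$ coming from the canonical graded lift: Lemma \ref{keylem0} together with the Hecke computation $H_x\uH_s=H_{xs}+vH_x$ (valid because $xs>x$) gives $[\theta_s\Delta(x)]=[\Delta(xs)]+v[\Delta(x)]$, and applying $\circledast$ (which exchanges $v$ with $v^{-1}$ and $[\Delta(y)]$ with $[\nabla(y)]$) yields $[\theta_s\nabla(x)]=[\nabla(xs)]+v^{-1}[\nabla(x)]$. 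Substituting into the Euler characteristic of the four-term sequence,
$$[\mathcal{L}_1C_s\nabla(x)]=v[\nabla(x)]-\bigl(v^{-1}[\nabla(x)]+[\nabla(xs)]\bigr)+v^{-1}[\nabla(x)]=v[\nabla(x)]-[\nabla(xs)],$$
and the identification with $[K_{x,xs}\<1\>]$ is immediate from the defining short exact sequence $0\to K_{x,xs}\to\nabla(x)\to\nabla(xs)\<-1\>\to 0$, which yields $[K_{x,xs}\<1\>]=v[\nabla(x)]-[\nabla(xs)]$.

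The main obstacle is the character formula for $\theta_s\nabla(x)$ in the direction $xs>x$, since the Stroppel short exact sequence directly handles only the case $ys<y$; the duality argument above (equivalently, the isomorphism $\theta_s\nabla(x)\cong\theta_s\nabla(xs)\<-1\>$ which can be read off the two character formulas) bridges the two cases. All the remaining claims are formal consequences of the vanishing in Lemma \ref{Csaction1} and the defining triangle for $C_s$.
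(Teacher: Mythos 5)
Your proof is correct and follows the same overall strategy as the paper's: invoke Lemma~\ref{Csaction1}(3) to control $\mathcal{L}_1C_s\nabla(x)$, then combine with the character of $\theta_s\nabla(x)$ and Lemma~\ref{Csaction0}. The one genuine difference in technique is how you handle $[\theta_s\nabla(x)]$ and the kernel: the paper observes directly that the adjunction $\adj_s\nabla(x)$ factors through the canonical surjection $\nabla(x)\<1\>\twoheadrightarrow\nabla(xs)$ (injective when $xs<x$, kernel $K_{x,xs}\<1\>$ when $xs>x$), reading off the answer as an honest module isomorphism and citing Stroppel for the $\theta_s$ characters, whereas you compute $[\theta_s\nabla(x)]$ in the hard direction $xs>x$ by applying $\circledast$ to the Hecke-algebra identity $H_x\uH_s=H_{xs}+vH_x$ via Lemma~\ref{keylem0} and then extract $[\mathcal{L}_1C_s\nabla(x)]$ from the Euler characteristic of the four-term exact sequence. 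Both routes are sound; yours is more self-contained (it avoids a second appeal to the Stroppel exact sequences for $\theta_s\nabla$), while the paper's gives the stronger statement $\mathcal{L}_1C_s\nabla(x)\cong K_{x,xs}\<1\>$ as modules rather than just in the Grothendieck group (the lemma as stated only requires the latter). One small precision worth making: the four-term exact sequence you write is not obtained by ``deriving'' the cokernel presentation in the usual sense — it \emph{is} the content of Lemma~\ref{Csaction1}(3) (which identifies $\mathcal{L}_1C_s\nabla(x)$ with $\ker(\adj_s\nabla(x))$) spliced together with the definition $C_s\nabla(x)=\mathrm{coker}(\adj_s\nabla(x))$; phrasing it that way avoids any suggestion that general homological nonsense produces the sequence automatically.
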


\begin{proof} By the proof of Lemma \ref{Csaction1}, $\mathcal{L}_1C_s\nabla(x)=\ker\bigl(\adj_s\!\nabla(x)\bigr)$ is equal to the kernel of the canonical map  $\nabla(x)\twoheadrightarrow\nabla(xs)\<-1\>$, which implies that $$
[\mathcal{L}_1C_s\nabla(x)]=v[\nabla(x)]-[\nabla(xs)]=[K_{x,xs}\<1\>]. $$
The remaining statements follow from \cite[Theorem 3.10, (5.3)]{Stro2}.
\end{proof}

\noindent
{\textbf{Proof of Theorem \ref{mainthm4}:}}  We first show that for any $x\in W$, \begin{equation}\label{eq41}
(\mL C_s-v^{-1})(\mL C_s+v)[\nabla(x)]=0 .
\end{equation}

Suppose $xs>x$. Then by Lemmas \ref{Csaction0}, \ref{Csaction1} and \ref{Csaction2}, we have $$
(\mL C_s+v)[\nabla(x)]=[\nabla(x)\<-1\>]-[K_{x,xs}\<1\>]+v[\nabla(x)]=v^{-1}[\nabla(x)]+[\nabla(xs)].
$$
Thus, $$\begin{aligned}
&\quad (\mL C_s-v^{-1})(\mL C_s+v)[\nabla(x)]\\
&=(\mL C_s-v^{-1})\bigl(v^{-1}[\nabla(x)]+[\nabla(xs)]\bigr)\\
&=v^{-1}[\mL C_s\nabla(x)]-v^{-2}[\nabla(x)]+[\mL C_s\nabla(xs)]-v^{-1}[\nabla(xs)]\\
&=v^{-2}[\nabla(x)]-([\nabla(x)]-v^{-1}[\nabla(xs)])-v^{-2}[\nabla(x)]+[\nabla(x)]-v^{-1}[\nabla(xs)]\\
&=0.
\end{aligned}
$$
Now suppose that $xs<x$. Then by Lemmas \ref{Csaction0}, \ref{Csaction1} and \ref{Csaction2}, we have that $$
(\mL C_s+v)[\nabla(x)]=[\nabla(xs)]+v[\nabla(x)].
$$
Thus, $$\begin{aligned}
&\quad (\mL C_s-v^{-1})(\mL C_s+v)[\nabla(x)]\\
&=(\mL C_s-v^{-1})\bigl([\nabla(xs)]+v[\nabla(x)]\bigr)\\
&=[\mL C_s\nabla(xs)]-v^{-1}[\nabla(xs)]+v[\mL C_s\nabla(x)]-[\nabla(x)]\\
&=v^{-1}[\nabla(xs)]-(v[\nabla(xs)]-[\nabla(x)])-v^{-1}[\nabla(xs)]+v[\nabla(xs)]-[\nabla(x)]\\
&=0.
\end{aligned}
$$
This completes the proof of (\ref{eq41}).

Second, we want to show that for any $u,w\in W$ with $\ell(uw)=\ell(u)+\ell(w)$, $\mL C_u\mL C_w=\mL C_{uw}$ on the Grothendieck group of $D^b(\O_0^\Z)$. Using Lemma \ref{Csaction1}, it suffices to show that for any $x\in W$,
\begin{equation}\label{eq22}
[C_u C_w\Delta(x)]=[C_{uw}(\Delta(x))].
\end{equation}
However, this follows from (\ref{braid12}). Now to complete the proof of the first part of the theorem, it remains to show that
$[(\mL C_s)\nabla(x)]=H_{w_0x}H_s,\forall\,s\in S, x\in W$.

Let $s\in S$ and $x\in W$. Suppose $xs<x$. Then $w_0xs>w_0x$. Applying Lemma \ref{Csaction1}, we get that $C_s\nabla(x)\cong\nabla(xs)$. On the other hand, we have $$
H_{w_0x}H_s=H_{w_0xs}.
$$
Hence $[(\mL C_s)\nabla(x)]=[\nabla(xs)]=H_{w_0xs}=H_{w_0x}H_s$.

Now suppose that $xs>x$. Then $w_0xs<w_0x$. In this case, applying Lemma \ref{Csaction2} we can deduce that $$\begin{aligned}
\bigl[{\mL}C_s\nabla(x)\bigr] &=[C_s\nabla(x)]-[\mL_1 C_s\nabla(x)]=[\nabla(x)\<-1\>]-[K_{x,xs}\<1\>]\\
&=v^{-1}[\nabla(x)]-\bigl(v[\nabla(x)]-[\nabla(xs)]\bigr)\\
&=[\nabla(xs)]+(v^{-1}-v)[\nabla(x)] .
\end{aligned}
$$
On the other hand, the fact that $w_0sx<w_0x$ implies that $$
H_{w_0x}H_s=(v^{-1}-v)H_{w_0x}+H_{w_0xs}.
$$
This proves that $[(\mL C_s)\nabla(x)]=H_{w_0x}H_s$

By (\ref{Lnablax}), we have \begin{equation}\label{Lnablax2}
[L(x)]=[\nabla(x)]+\sum_{y>x}(-v)^{\ell(x)-\ell(y)}P_{w_0y,w_0x}(v^{2})[\nabla(y)].
\end{equation}
Hence $$\begin{aligned}
\rho([L(x)])&=H_{w_0x}+\sum_{y>x}(-v)^{\ell(x)-\ell(y)}P_{w_0y,w_0x}(v^{2})H_{w_0y}\\
&=H_{w_0x}+\sum_{w_0y<w_0x}(-v)^{\ell(w_0y)-\ell(w_0x)}P_{w_0y,w_0x}(v^{2})H_{w_0y}\\
&=\ucH_{w_0x} .
\end{aligned}$$
This completes the proof of Theorem \ref{mainthm4}.\hfill\qed
\bigskip

\noindent
{\textbf{Proof of Proposition \ref{mainprop6}:}} Assume $C_sL(x)\neq 0$. Then by definition of $C_s$ we can deduce that $\theta_sL(x)\neq 0$. Hence $\huH_x\uH_s\neq 0$ by Lemma \ref{keylem0}. Applying \cite[(5.1.14)]{L0}, we get that $s\geq_{L}x$, where $\leq_L$ is the Kazhdan-Lusztig left preorder defined in \cite{KL}. It follows that
$s\in \mathscr{R}(x)$. That is, $xs<x$. Conversely, assume $xs<x$. Then by \cite[Proposition 46]{CMZ} and the definition of $C_s$ we can deduce that $C_sL(x)\neq 0$.

Now assume $xs<x$. Applying Lemma \ref{Csaction2}, we can deduce that $\mathcal{L}_1C_s\nabla(x)=0$. Note that
$\mathcal{L}_2C_s(\nabla(x)/L(x))=0$. It follows that $\mathcal{L}_1C_sL(x)=0$.

The assumption that $xs<x$ implies that $w_0xs>w_0x$. Applying Theorem \ref{mainthm4} and (\ref{2Hs}), we get that $$\begin{aligned}
{}\rho([C_sL(x)])&=\ucH_{w_0x}H_s=\ucH_{w_0x}\ucH_s+v^{-1}\ucH_{w_0x}\\
&=v^{-1}\ucH_{w_0x}+\ucH_{w_0xs}+\sum_{\substack{y\in W\\ w_0ys<w_0y<w_0x}}\mu(w_0y,w_0x)\ucH_{w_0y}\\
&=v^{-1}\ucH_{w_0x}+\ucH_{w_0xs}+\sum_{\substack{y\in W\\ ys>y>x}}\mu(x,y)\ucH_{w_0y}\\
&=v^{-1}[L(x)]+[L(xs)]+\sum_{\substack{y\in W\\ ys>y>x}}\mu(x,y)[L(y)].
\end{aligned} $$

On the other hand, applying Theorem \ref{mainthm1}, we get that $$\begin{aligned}
{}\rho([C_sL(x)])&=[L(x)]H_s\\
&=\sum_{y\geq x}(-v)^{\ell(x)-\ell(y)}P_{w_0y,w_0x}(v^2)[\nabla(y)]H_s\\
&=\sum_{\substack{y\geq x\\ ys<y}}(-v)^{\ell(x)-\ell(y)}P_{w_0y,w_0x}(v^2)H_{w_0y}H_s+\sum_{\substack{y\geq x\\ ys>y}}(-v)^{\ell(x)-\ell(y)}P_{w_0y,w_0x}(v^2)H_{w_0y}H_s\\
&=\sum_{\substack{y\geq x\\ x\nleq ys<y}}(-v)^{\ell(x)-\ell(y)}P_{w_0y,w_0x}(v^2)H_{w_0ys}+\sum_{\substack{y\geq x\\ x\leq ys<y}}(-v)^{\ell(x)-\ell(y)}P_{w_0y,w_0x}(v^2)H_{w_0ys}\\
&\qquad +\sum_{\substack{y\geq x\\ ys>y}}(-v)^{\ell(x)-\ell(y)}P_{w_0y,w_0x}(v^2)H_{w_0y}H_s\\
&=\sum_{\substack{y\geq x\\ x\nleq ys<y}}(-v)^{\ell(x)-\ell(y)}P_{w_0y,w_0x}(v^2)H_{w_0ys}+\sum_{\substack{y\geq x\\  ys>y}}(-v)^{\ell(x)-\ell(y)-1}P_{w_0ys,w_0x}(v^2)H_{w_0y}\\
&\qquad +\sum_{\substack{y\geq x\\ ys>y}}(-v)^{\ell(x)-\ell(y)}P_{w_0y,w_0x}(v^2)\bigl((v^{-1}-v)H_{w_0y}+H_{w_0ys}\bigr)\\
\end{aligned}
$$
Applying \cite[(2.3.g)]{KL} and \cite[Corollary 4.4]{Bre}, we see that for any $y,w\in W$ with $y<w, ys<y, ws>w$, $$
P_{y,w}(v^2)=P_{ys,w}(v^2) .
$$
Therefore, $$\begin{aligned}
{}\rho([C_sL(x)])&=\sum_{\substack{y\geq x\\ x\nleq ys<y}}(-v)^{\ell(x)-\ell(y)}P_{w_0y,w_0x}(v^2)H_{w_0ys}+\sum_{\substack{y\geq x\\ ys>y}}(-v)^{\ell(x)-\ell(y)}P_{w_0y,w_0x}(v^2)\bigl(-vH_{w_0y}+H_{w_0ys}\bigr)\\
&=\sum_{\substack{y\geq x\\ x\nleq ys<y}}(-v)^{\ell(x)-\ell(y)}P_{w_0y,w_0x}(v^2)H_{w_0ys}+\sum_{\substack{y\geq x\\ ys>y}}(-v)^{\ell(x)-\ell(y)+1}\biggl(P_{w_0y,w_0x}(v^2)(H_{w_0y}-v^{-1}H_{w_0ys})\biggr).
\end{aligned}
$$
It follows that
$$
[C_sL(x)]=\sum_{\substack{y\geq x\\ x\nleq ys<y}}(-v)^{\ell(x)-\ell(y)}P_{w_0y,w_0x}(v^2)[\nabla(ys)]+\sum_{\substack{y\geq x\\ ys>y}}(-v)^{\ell(x)-\ell(y)+1}P_{w_0y,w_0x}(v^2)\bigl([\nabla(y)]-v^{-1}[\nabla(ys)]\bigr) .
$$
This completes the proof of Proposition \ref{mainprop6}.
\hfill\qed

\bigskip

\end{document}